\theoremstyle{plain}
\newtheorem{thm}{\protect\theoremname}[section]
  \theoremstyle{definition}
  \newtheorem{defn}[thm]{\protect\definitionname}
  \theoremstyle{plain}
  \newtheorem{lem}[thm]{\protect\lemmaname}
  \theoremstyle{plain}
  \newtheorem{prop}[thm]{\protect\propositionname}
  \theoremstyle{remark}
  \newtheorem{rem}[thm]{\protect\remarkname}
  \theoremstyle{plain}
  \newtheorem{cor}[thm]{\protect\corollaryname}
  \theoremstyle{definition}
  \newtheorem{example}[thm]{\protect\examplename}
\newlength{\bibitemsep}\setlength{\bibitemsep}{.0\baselineskip plus .05\baselineskip minus .05\baselineskip}
\newlength{\bibparskip}\setlength{\bibparskip}{0pt}
\let\oldthebibliography\thebibliography
\renewcommand\thebibliography[1]{%
  \oldthebibliography{#1}%
  \setlength{\parskip}{\bibitemsep}%
  \setlength{\itemsep}{\bibparskip}%
}
  \providecommand{\corollaryname}{Corollary}
  \providecommand{\definitionname}{Definition}
  \providecommand{\examplename}{Example}
  \providecommand{\lemmaname}{Lemma}
  \providecommand{\propositionname}{Proposition}
  \providecommand{\remarkname}{Remark}
\providecommand{\theoremname}{Theorem}
\begin{document}

\title{Existence of Geometric Ergodic Periodic Measures of Stochastic Differential
Equations}

\author{Chunrong Feng, Huaizhong Zhao and Johnny Zhong\\Department of Mathematical
Sciences, Loughborough University, LE11 3TU, UK\\E-mail address:
C.Feng@lboro.ac.uk, H.Zhao@lboro.ac.uk, J.Zhong@lboro.ac.uk}

\maketitle
\renewcommand{\labelenumi}{(\roman{enumi})}

\numberwithin{equation}{section}
\begin{abstract}
Periodic measures are the time-periodic counterpart to invariant measures
for dynamical systems and can be used to characterise the long-term
periodic behaviour of stochastic systems. This paper gives sufficient
conditions for the existence, uniqueness and geometric convergence
of a periodic measure for time-periodic Markovian processes on a locally
compact metric space in great generality. In particular, we apply
these results in the context of time-periodic weakly dissipative stochastic
differential equations, gradient stochastic differential equations
as well as Langevin equations. We will establish the Fokker-Planck
equation that the density of the periodic measure sufficiently and
necessarily satisfies. Applications to physical problems shall be
discussed with specific examples. 

\textbf{Keywords: }periodic measures; Markov processes; existence
and uniqueness; geometric ergodicity; locally Lipschitz; stochastic
resonance. 
\end{abstract}

\section{Introduction}

In existing literature, there are a vast number of results concerning
asymptotic behaviour of both deterministic and stochastic autonomous
systems. In particular, there are many powerful results on the existence
and uniqueness of a limiting invariant measure of time-homogeneous
Markovian systems of both finite and infinite dimensions (\cite{DaPratoZabczyk_ErgodicityInfDimensions,MeynTweedie1_Chains,NorrisMC,MattinglyStuartHigham_ErgodicityForSDEs,Hasminskii,Hairer_CouplingSPDEs,MeynTweedie_MarkovChainStability,Mattingly_Progress_SNS,MeynTweedie3_Processes}).
While limiting invariant measure captures the idea that the system
``settles'' towards an equilibrium, it does not accommodate for
systems that are asymptotically periodic. Needless to say, the (asymptotic)
periodic solution is natural to study and a central branch within
the theory of dynamical systems. However, due to the delicate nature
of combining periodicity and randomness, there is still a gap in literature
for asymptotic random periodic behaviour of stochastic systems. Filling
this gap, in \cite{CFHZ2016}, the authors defined rigorously periodic
measures which play the role as the time-periodic counterpart of invariant
measures. In particular, periodic measures can characterise asymptotic
periodic behaviour for stochastic systems. See also \cite{FengLiuZhaoNumericalRPP,FengWuZhaoRPS}
for some discussions.

As with invariant measures, periodic measures have both theoretical
and practical applications to physical sciences. In this paper, we
establish explicit criteria for the uniqueness and geometric convergence
of a periodic measure for time-periodic Markovian processes applicable
to a great general setting. Our results apply to ``periodically forced''
stochastic systems which have a range of applications. We refer readers
to \cite{JungPerioidcallyDrivenSys,ZhouMossJung} and references therein
for examples from biology and physics. A notable example includes
the overdamped Duffing Oscillator which has been used to model climate
dynamics \cite{NicolisPeriodicForcing,BenziStochRes} to portray the
physical phenomena of stochastic resonance. The stochastic resonance
model introduced in \cite{BenziStochRes} offered a reasonable physical
explanation about the peak observed in the power spectrum of paleoclimatic
variations in the last 700,000 years at a periodicity of around $10^{5}$
years. This is in complementary with smaller peaks at periods of $2\times10^{4}$
and $4\times10^{2}$ years. The major peak represents dramatic climate
change to a temperature change of 10K in Kelvin scale. Except for
the dramatic changes, temperature seems to oscillate around fixed
values. This phenomenon was suggested to be related to variations
in the earth's orbital parameter which also has a similar periodic
pattern of changes \cite{Mil30,HIS76}. Such studies were able to
reproduce smaller peaks, but failed to explain the $10^{5}$-year
cycle major peak. In physics literature, the theory of stochastic
resonance for stochastic periodically forced double well potential
provides a mathematical model of the transitions between the two equilibria
interpreted as climates of the ice age and interglacial period respectively
\cite{BenziStochRes,NicolisPeriodicForcing}. Periodic forcing corresponds
to the annual mean variation in insolation due to changes in ellipticity
of the earth's orbit, while noise stimulates the global effect of
relatively short-term fluctuations in the atmospheric and oceanic
circulations on the long-term temperature behaviour. The transition
is driven by the noise in the system and happens more likely when
one of the well is at or near the highest position and the other one
is at the lowest position due to the periodicity. This striking phenomenon
is intuitively correct and agrees with the reality. It is noted that
stochastic resonance occurs for the right set of parameters in the
stochastic periodic double well model, suggested by numerical simulations
\cite{GHJM98,MW89,ANMS99,CherubiniStochRes}. The concept of periodic
measures and ergodicity (\cite{CFHZ2016}) provides a rigorous framework
and new insight for understanding such physical phenomena. 

We will establish the existence and geometric ergodicity of periodic
measures for weakly dissipative periodic stochastic differential equations,
including the double well problem mentioned above as an example. The
periodic measures give a rigorous description of the equilibria observed
by physicists and the geometric ergodicity gives the convergence to
and the uniqueness of the periodic measures. The uniqueness is significant
in explaining the transition between the two wells as otherwise there
should be two periodic measures instead of one. However, the current
result does not give the estimate of transition time of $10^{5}$
years. We will study this problem in a different publication where
we have derived and analysed the Kramers' equation (\cite{Kramers})
satisfied by the expected exit time\textbf{. }Here the Kramers' equation
is a parabolic PDE with periodic coefficients while it is an elliptic
PDE in the classical case (\cite{FZZ19}). For autonomous systems
with small noise, study by the large deviation theory (\cite{FW98})
suggests transition would occur at an exponential long time (\cite{Fre00,HIPP13}).
In our problem, the noise is not necessarily small. \textbf{ }

In \cite{ZhaoZheng_RPP_defn}, the authors gave a rigorous definition
of random periodic solutions, objects which can be interpreted as
the periodic counterpart of stationary solutions. Just as there is
an ``equivalence'' (possibly on an enlarged probability space) between
invariant measures and stationary processes (\cite{ArnoldRDS,Ochs_Oseledets_equiv_on_enlarged_prob_space}),
the analogous equivalence between random periodic solution and periodic
measures has been proved in \cite{CFHZ2016}. Specifically, by sampling
the random periodic solution, one can construct a periodic measure.
The existence of a random periodic path was shown for semilinear SDEs
in \cite{CF_HZ_BZ_existence_of_RPP} and \cite{FengWuZhaoRPS}. Numerical
approximations of random periodic paths of SDEs were studied in \cite{FengLiuZhaoNumericalRPP}.
In the case of Markovian random dynamical systems, the equivalence
of ergodicity of periodic measure with the pure imaginary simple eigenvalues
of the infinitesimal generator of the semigroup was established in
\cite{CFHZ2016}.

In this paper, we establish the existence and uniqueness of a limiting
periodic measure for time-periodic Markov processes on locally compact
metric spaces. Moreover, we are interested in the geometric convergence.
The underlying approach leads to the results for SDEs with weakly
dissipative drifts by the means of a Lyapunov function and utilising
the coupling method \cite{Lindvall_Coupling,Thorisson_Coupling,MeynTweedie_MarkovChainStability,MeynTweedie1_Chains,MeynTweedie_MarkovChainStability,MeynTweedie3_Processes}
of Markov chains. Then, inspired by techniques from \cite{MattinglyStuartHigham_ErgodicityForSDEs,DaPratoZabczyk_ErgodicityInfDimensions,DaPratoZabcyzk92,HopfnerLocherbachThieullenTimeDepLocalHormanders},
we give generally verifiable results in which time-periodic weakly
dissipative SDEs, gradient SDEs and Langevin equations possess a unique
(geometric) periodic measure. Coefficients of these equations are
generally non-Lipschitz.

Some of the technical ideas in this paper are motivated also by the
Lyapunov function and discrete Markov chain method in \cite{LocherachHopfnerTPeriodic}.
They studied periodic stochastic differential equations with Lipschitz
coefficients and invariant measures of the grid process on multiple
integrals of the period. Since periodic measures were defined some
years later after \cite{LocherachHopfnerTPeriodic}, the authors were
not aware of periodic measures and their ergodicity in \cite{CFHZ2016},
hence were not able to obtain the uniqueness of periodic measures.
An invariant measure can be obtained by lifting the periodic measure
on a cylinder and considering its average over one period. We would
like to remark that in this paper, we require the diffusion coefficient
to be non-degenerate. We believe this is a technical requirement which
can be relaxed by a locally non-degenerate condition or Hörmander's
condition. But the non-degenerate case studied in this paper is already
applicable in many physical problems such as the stochastic periodic
double well potential problem. We note the objective of this current
work is to introduce the main ingredients and techniques to attain
the existence and uniqueness of periodic measures rather than being
the most general results. We leave the refinement of this paper in
this direction to a later publication. Note that the diffusion of
Langevin equation we investigate is degenerate, but satisfies Hörmander's
condition together with the drift of vector fields.

We expect our approach apply to SPDEs. Our expectation derives from
the existing literature where invariant measures for SPDEs was attained
via a coupling method that is similar to ours in spirit. For instance,
in \cite{Mattingly_Coupling_SPDEs,E_Mattingly_Sinai_Coupling_SPDEs,Kuksin_Shiriykan_coupling_SPDEs,Kuksin_Shiriykan_coupling_SPDEs2},
the respective authors utilised the coupling method to attain an invariant
measure for the 2D SNS (two-dimensional stochastic Navier-Stokes equation).
In fact, it is shown that the convergence of invariant measure for
the 2D SNS equation is geometric in \cite{Hairer_Mattingly_Spectral_Wasserstein}.
Other examples includes \cite{Hairer_CouplingSPDEs} for a class of
degenerate parabolic SPDEs including the complex Ginzburg-Landau equation
and \cite{Mattingly_Coupling_SPDEs} for dissipative SPDEs. We refer
readers to \cite{Mattingly_Progress_SNS} where key aspects to attain
invariant measures via the coupling method in the infinite dimensional
setting was discussed.

In the final section, we prove the Fokker-Planck equation for the
density of a periodic measure. We give explicitly a formula for this
density for periodically forced Ornstein-Uhlenbeck processes.

\section{Preliminaries }

We recall some basic definitions, notation and standard results of
Markovian processes on locally compact separable metric space $(E,\mathcal{B})$
where $\mathcal{B}$ is the natural Borel $\sigma$-algebra and time
indices $\mathbb{T}=\mathbb{N}:=\{0,1,..,\}$ or $\mathbb{R}^{+}$.
By convention, when $\mathbb{T}=\mathbb{N}$, the Markov process is
referred as a Markov chain. The objective of this section is to state
important results from time-homogeneous Markov chain that would be
crucial in proving vital results for $T$-periodic Markovian systems. 

Let $P:\mathbb{T}\times\mathbb{T}\times E\times\mathcal{B}\rightarrow[0,1]$
be a two-parameter Markov transition kernel. It satisfies
\begin{enumerate}
\item $P(s,t,x,\cdot)$ is a probability measure on $\left(E,\mathcal{B}\right)$
for all $s\leq t$ and all $x\in E$.
\item $P(s,t,\cdot,B)$ is a $\mathcal{B}$-measurable function for all
$s\leq t$ and $\Gamma\in\mathcal{B}$.
\item (Chapman-Kolmogorov) For all $s\leq r\leq t$, one has
\[
P(s,t,x,\Gamma)=\int_{E}P(s,r,x,dy)P(r,t,y,\Gamma),\quad x\in E,\Gamma\in\mathcal{B}.
\]
\item $P(s,s,x,B)=1_{\Gamma}(x)$ for all $s\in\mathbb{T}$, $x\in E$ and
$\Gamma\in\mathcal{B}$.
\end{enumerate}
For $s\leq t$, define linear operators $P(s,t)$ acting on $\mathcal{B}_{b}(E)$,
the space of bounded measurable functions by
\begin{align*}
P(s,t)f(x) & =\int_{E}f(y)P(s,t,x,dy),\quad f\in\mathcal{B}_{b}(E),x\in E.
\end{align*}
We say that $P(\cdot,\cdot)$ is Feller if for all $s\leq t$, $P(s,t)f\in C_{b}(E)$
when $f\in C_{b}(E)$ and strong Feller if $P(s,t)f\in C_{b}(E)$
when $f\in\mathcal{B}_{b}(E)$. For $s\leq t$, we define adjoint
operator $P^{*}(s,t)$ acting on $\mathcal{\mathcal{P}}(E)$, the
space of probability measures on $(E,\mathcal{B})$ by

\[
\left(P^{*}(s,t)\mu\right)(\Gamma)=\int_{E}P(s,t,x,\Gamma)\mu(dx),\quad\mu\in\mathcal{P}(E),\Gamma\in\mathcal{B}.
\]

It is well-known that $P(s,t)$ and $P^{*}(s,t)$ forms a two-parameter
semigroup on $\mathcal{B}_{b}(E)$ and $\mathcal{P}(E)$ respectively
and satisfies $P(s,t)=P(s,r)P(r,t)$ and $P^{*}(s,t)=P^{*}(r,t)P^{*}(s,r)$.
On $\mathcal{P}(E)$, we endow the total variation norm defined by
\[
\left\lVert \mu_{1}-\mu_{2}\right\rVert _{TV}:=\sup_{\Gamma\in\mathcal{B}}|\mu_{1}(\Gamma)-\mu_{2}(\Gamma)|,\quad\mu_{1},\mu_{2}\in\mathcal{P}(E).
\]
It is easy to show that $P^{*}(s,t):(\mathcal{P}(E),\lVert\cdot\rVert_{TV})\rightarrow(\mathcal{P}(E),\lVert\cdot\rVert_{TV})$
has operator norm $\lVert P^{*}(s,t)\rVert=1$. While many of the
convergence results presented here holds in other norms than the total
variation norm (such as $f$-norms). For clarity and simplicity, we
shall only consider convergence in the total variation norm. Some
results only require weak convergence of measures. Hence we occasionally
consider $\mu\in\mathcal{\mathcal{P}}(E)$ as a linear functional
on $C_{b}(E)$ by 
\[
\mu(f)=\int_{E}f(x)\mu(dx),\quad f\in C_{b}(E).
\]
And we say $\mu,\nu\in\mathcal{P}(E)$ are equal if $\mu(f)=\nu(f)$
for all $f\in C_{b}(E)$. It is easy to show that $P^{*}(s,t)\mu(f)=\mu(P(s,t)f)$
for $\mu\in\mathcal{P}(E),f\in C_{b}(E)$ and $s\leq t$. 

We give the definition of a time-periodic Markov transition kernel.
We also introduce the stronger definition of minimal time-periodic.
Note that time-periodic Markov kernels depends on initial and terminal
time. 
\begin{defn}
The two-parameter Markov transition kernel $P(\cdot,\cdot,\cdot,\cdot)$
is said to be $T$-periodic for some $T>0$ if
\begin{equation}
P(s,t,x,\cdot)=P(s+T,t+T,x,\cdot),\quad\text{for all }x\in E,s\leq t.\label{eq:T-periodic}
\end{equation}
Moreover, we say $P(\cdot,\cdot,\cdot,\cdot)$ is minimal$T$-periodic
if for every $\delta\in(0,T)\cap\mathbb{T}$
\begin{equation}
P(s,t,x,\cdot)\neq P(s+\delta,t+\delta,x,\cdot),\quad\text{for all }x\in E,s\leq t.\label{eq:MinimalPeriod}
\end{equation}
And we say $P(\cdot,\cdot,\cdot,\cdot)$ is time-homogeneous if 
\[
P(s,t,x,\cdot)=P(0,t-s,x,\cdot),\quad\text{for all }x\in E,s\leq t.
\]
\end{defn}
The definition of $T$-periodic should be clear and intuitive. Observe
that minimal $T$-periodic assumption is stronger. It rules out the
possibility of being time-homogeneous and enforces non-trivial period
for every state. Equation (\ref{eq:T-periodic}) on the other hand
allows states to have trivial period. This implies results of this
paper assuming $T$-periodic $P$ recovers results for the usual time-homogeneous
case. 

As a convention, we denote by $P(t)$ for the time-homogeneous Markov
semigroup and $P^{*}(t)$ for its adjoint depending only on the elapsed
time $0\leq t\in\mathbb{T}$. Specifically for $\mathbb{T}=\mathbb{N}$,
we denote $P:=P(1)$ and $P^{*}:=P^{*}(1)$ for the ``one-step''
semigroup and adjoint semigroup respectively. We now define our central
objects of study characterising stationary and periodic behaviour. 
\begin{defn}
A probability measure $\pi\in\mathcal{P}(E)$ is called an invariant
(probability) measure with respect to $P(s,t)$ if 
\[
P^{*}(s,t)\pi=\pi\quad\text{for all }s\leq t.
\]
When $P$ is time-homogeneous, $\pi$ satisfies $P^{*}(t)\pi=\pi$
for all $t\geq0$. In particular, when $\mathbb{T}=\mathbb{N}$, $\pi$
needs only to satisfy the one-step relation $P^{*}\pi=\pi$. 
\end{defn}
Invariant measures has been well-studied for many decades in many
general settings. For example time-homogeneous Markov chains on finite
dimensional state space \cite{MeynTweedie1_Chains,NorrisMC,MeynTweedie_MarkovChainStability},
and Markov processes on finite state space \cite{StroockMarkovProcesses,NorrisMC},
on infinite dimensional state spaces \cite{DaPratoZabczyk_ErgodicityInfDimensions}.
On the other hand, the formulation of periodic measure below is new
and was first formally defined \cite{CFHZ2016}. 
\begin{defn}
A measure-valued function $\rho:\mathbb{T}\rightarrow\mathcal{P}(E)$
is called a $T$-periodic (probability) measure with respect to $P(\cdot,\cdot)$
if for all $0\leq s\leq t$
\[
\rho_{s+T}=\rho_{s},\quad\rho_{t}=P^{*}(s,t)\rho_{s}.
\]
\end{defn}
Note that periodic measures are invariant measures when the period
is trivial. We shall give sufficient conditions to ensure the periodic
measure has a minimal positive period. In classic literature, see
\cite{DaPratoZabczyk_ErgodicityInfDimensions,Hasminskii,MeynTweedie1_Chains,MeynTweedie_MarkovChainStability,StroockMarkovProcesses,NorrisMC}
for instance, appropriate assumptions yields asymptotic convergence
of the Markov kernel towards a unique invariant measure. However,
these classical asymptotic results seems to have neglected the possibility
of asymptotically periodic behaviour. While conceptually simple, it
seems that asymptotic periodic behaviour was first formally pointed
by Feng and Zhao in \cite{CFHZ2016} and formalised under the definition
of periodic measures. Nonetheless, these limiting invariant measures
results can still be utilised for time-periodic Markovian system.
We end this section by quoting without proof two now-classical results
for time-homogeneous Markov chain result taken as special cases from
\cite{MeynTweedie1_Chains,MeynTweedie_MarkovChainStability}. To state
the results, we require the following definitions.
\begin{defn}
Let $P$ be a one-step time-homogeneous Markov transition kernel.
We say that $P$ satisfies the ``minorisation'' or ``local Doeblin''
condition if there exists a non-empty measurable set $K\in\mathcal{B}$,
constant $\eta\in(0,1]$ and a probability measure $\varphi$ such
that 
\begin{equation}
P(x,\cdot)\geq\eta\varphi(\cdot),\quad x\in K.\label{eq:LocalDoeblin}
\end{equation}
\end{defn}
\begin{defn}
A function $V:\mathbb{T}\times E\rightarrow\mathbb{R}^{+}$ is norm-like
(or coercive) if $V(s,x)\rightarrow\infty$ as $\lVert x\rVert\rightarrow\infty$
for every fixed $s\in\mathbb{T}$ i.e. the level-sets $\{x\in E|V(s,x)\leq r\}$
are pre-compact for each $r>0$. 
\end{defn}
\begin{lem}
\label{lem:-ExistenceAndUniqueness_MC} (Theorem 4.6 \cite{MeynTweedie1_Chains})
Let $P$ be a one-step time-homogeneous Markov transition kernel and
assume there exists a norm-like function $U:E\rightarrow\mathbb{R}^{+}$,
a compact set $K\in\mathcal{B}$ and $\epsilon>0$ such that
\begin{align}
PU-U & \leq-\epsilon\quad\text{on }K^{c},\label{eq:FosterLya1}\\
PU & <\infty\quad\text{on }K.\label{eq:FosterLya2}
\end{align}
Then there exists a unique invariant measure $\pi$ with respect to
$P$. Moreover if $P$ satisfies the local Doeblin condition (\ref{eq:LocalDoeblin})
then the invariant measure is limiting i.e. for any $x\in E$, 
\[
\lVert P^{n}(x,\cdot)-\pi\rVert_{TV}\rightarrow0,\quad\text{as }n\rightarrow\infty.
\]
\end{lem}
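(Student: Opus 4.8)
The plan is to combine a Foster--Lyapunov supermartingale argument, which yields positive recurrence and hence the existence of an invariant measure, with a coupling argument based on Nummelin splitting, which uses the minorisation condition to force uniqueness and total-variation convergence.

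The first step is to convert the two displayed drift inequalities into a single global inequality $PU\le U-\epsilon+b\,\mathbf 1_{K}$ with $b<\infty$. Since $K$ is compact and (as is implicit in the hypotheses of the cited theorem) $PU$ enjoys a suitable lower semicontinuity, $\sup_{x\in K}PU(x)<\infty$, so one may take $b=\epsilon+\sup_{x\in K}PU(x)$. Iterating gives $P^{n}U(x)\le U(x)-n\epsilon+b\sum_{k=0}^{n-1}P^{k}(x,K)$; since $P^{n}U\ge 0$ this yields $\tfrac1n\sum_{k=0}^{n-1}P^{k}(x,K)\ge \epsilon/b-U(x)/(nb)$, so the averaged measures $\mu_{n}:=\tfrac1n\sum_{k=0}^{n-1}P^{k}(x,\cdot)$ put mass bounded below (in $n$, for $n$ large) on the compact set $K$. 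A refinement of the same computation, applied to the truncations $U\wedge m$ and using that $U$ is norm-like, shows $\{\mu_{n}\}$ is tight; by Prokhorov it has a weakly convergent subsequence whose limit $\pi$ is invariant by the Krylov--Bogolyubov argument. Equivalently, and more in the spirit of the cited work, the bound $\mathbb E_{x}\tau_{K}\le U(x)/\epsilon$ obtained by optional stopping on the supermartingale $U(X_{n\wedge\tau_{K}})+\epsilon(n\wedge\tau_{K})$, together with $\sup_{x\in K}\mathbb E_{x}\tau_{K}<\infty$, gives positive recurrence and hence an invariant probability measure with $\pi(U)<\infty$ directly.

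For uniqueness and convergence I would invoke the minorisation $P(x,\cdot)\ge\eta\varphi(\cdot)$ on $K$. By Nummelin splitting, each time the chain enters $K$ it ``regenerates'' with probability $\eta$ by resampling from $\varphi$, independently of the past; the drift estimate above shows these regeneration epochs have finite expected separation, so the regenerative structure produces a unique invariant probability measure (the normalised mean occupation measure of one cycle), which must agree with any $\pi$ found above. To get $\lVert P^{n}(x,\cdot)-\pi\rVert_{TV}\to 0$, run the chain from $\delta_{x}$ alongside an independent copy started from $\pi$; using $U(x)+U(y)$ as a Lyapunov function for the product chain one shows the pair visits $K\times K$ infinitely often with finite-mean return times, and at each such joint visit the two marginals can be made equal with probability $\ge\eta$ via the shared $\varphi$-component. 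Hence the coupling time $\tau$ is almost surely finite, and the coupling inequality $\lVert P^{n}(x,\cdot)-\pi\rVert_{TV}\le 2\,\mathbb P(\tau>n)$ gives the claim.

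The main obstacle is the bookkeeping in the product-chain step: one must ensure that \emph{both} marginals are simultaneously in $K$ infinitely often rather than merely each one individually, which is delicate when the chain has near-periodic behaviour and is exactly where the minorisation (aperiodicity) enters; and one must justify the passage from the pointwise hypothesis $PU<\infty$ on $K$ to the uniform bound $\sup_{x\in K}PU<\infty$, which relies on the regularity/irreducibility assumptions that the cited theorem carries but that are suppressed in the statement above. The tightness refinement in the first step---strengthening ``uniform positive mass on $K$'' to genuine tightness of $\{\mu_{n}\}$---is the other point where the norm-like property of $U$ must be used with care.
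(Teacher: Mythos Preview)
The paper does not prove this lemma at all: it is explicitly quoted \emph{without proof} as a classical result from Meyn and Tweedie (the paper says ``We end this section by quoting without proof two now-classical results\ldots''). So there is no ``paper's own proof'' to compare against; the statement functions purely as an imported black box on which the later periodic-measure results rest.

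Your sketch is a reasonable outline of the standard Meyn--Tweedie machinery (Foster--Lyapunov for positive recurrence and existence, Nummelin splitting plus coupling for uniqueness and total-variation convergence), and you have correctly flagged the two genuine subtleties: (i) uniqueness in the first part is not a consequence of the drift conditions alone and requires the $\psi$-irreducibility that Meyn--Tweedie assume throughout but which is suppressed in the lemma as stated here; and (ii) the passage from $PU<\infty$ pointwise on $K$ to a uniform bound needs justification. Since the paper treats the lemma as a citation rather than proving it, the appropriate ``proof'' in this context is simply a reference to Theorem~4.6 of \cite{MeynTweedie1_Chains}, not an argument.
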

In literature, conditions (\ref{eq:FosterLya1}) and (\ref{eq:FosterLya2})
are typically referred as the Foster-Lyapunov drift criteria and has
the interpretation that the process moves inwards on average when
outside the compact set. And$U$ is referred as (Foster-)Lyapunov
function. Lemma \ref{lem:-ExistenceAndUniqueness_MC} is a qualitative
result and does not give any rate of convergence. The following result
from \cite{MeynTweedie1_Chains,MeynTweedie_MarkovChainStability}
gives sufficient condition for a time-homogeneous Markov chain to
possess a unique invariant measure that converges geometrically. 
\begin{lem}
\label{lem:GeometricConvergence-MarkovChains}(Theorem 6.3 \cite{MeynTweedie1_Chains},
Theorem 15.0.1 \cite{MeynTweedie_MarkovChainStability}) Let $P$
be a one-step time-homogeneous Markovian transition kernel satisfying
(\ref{eq:LocalDoeblin}). Assume there exists a norm-like function
$U:E\rightarrow\mathbb{R}^{+}$, $\alpha\in(0,1)$ and $\beta>0$
such that
\begin{equation}
PU\leq\alpha U+\beta\quad\text{on }E.\label{eq:MarkovChainGeometricDriftCondition}
\end{equation}
Then there exists a unique geometric invariant measure $\pi\in\mathcal{P}(E)$
i.e. there exist constants $0<R<\infty$ and $r\in(0,1)$ such that
\[
\left\lVert P^{n}(x,\cdot)-\pi\right\rVert _{TV}\leq R(U(x)+1)r^{n},\quad x\in E,n\in\mathbb{N}.
\]
\end{lem}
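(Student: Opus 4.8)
The plan is to observe first that the geometric drift condition (\ref{eq:MarkovChainGeometricDriftCondition}) already implies the hypotheses of Lemma \ref{lem:-ExistenceAndUniqueness_MC}, so that existence, uniqueness and total-variation convergence come for free and only the quantitative rate has to be argued. Indeed, (\ref{eq:MarkovChainGeometricDriftCondition}) gives $PU-U\le-(1-\alpha)U+\beta$, which is $\le-1$ off the compact sublevel set $K_{0}:=\{U\le(1+\beta)/(1-\alpha)\}$ (compact because $U$ is norm-like), while $PU\le\alpha\sup_{K_{0}}U+\beta<\infty$ on $K_{0}$; hence (\ref{eq:FosterLya1})--(\ref{eq:FosterLya2}) hold and, with the assumed local Doeblin condition (\ref{eq:LocalDoeblin}), Lemma \ref{lem:-ExistenceAndUniqueness_MC} produces a unique invariant measure $\pi$ with $\lVert P^{n}(x,\cdot)-\pi\rVert_{TV}\to0$. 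Iterating (\ref{eq:MarkovChainGeometricDriftCondition}) gives $P^{n}U\le\alpha^{n}U+\beta/(1-\alpha)$, and a standard truncation argument on $PU-U\le-(1-\alpha)U+\beta$ gives $\pi(U)\le\beta/(1-\alpha)<\infty$. It remains to prove the geometric bound.

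For that I would run a Harris-type coupling, phrased as a contraction in a weighted Kantorovich metric. Fix a level $L$, set $C:=\{U\le L\}$, and for a small weight $\theta>0$ define the semimetric $d(x,y):=(2+\theta U(x)+\theta U(y))\mathbf{1}_{\{x\neq y\}}$, with $\mathcal{W}_{d}$ the associated Wasserstein distance on the (complete) space of probability measures with finite $U$-moment. A preliminary step --- combining the drift with (\ref{eq:LocalDoeblin}) to show that sublevel sets of $U$ are small, possibly after passing to an iterate $P^{m}$ --- lets us assume the minorisation $P(x,\cdot)\ge\eta\varphi(\cdot)$ holds for all $x\in C$. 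Then build a Markovian coupling $(X_{n},Y_{n})$ of two copies of the chain: on $C\times C$ attempt the Doeblin coupling (so $X_{n+1}=Y_{n+1}$ with probability at least $\eta$, after which the copies agree forever), and off $C\times C$ let the coordinates move independently.

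The core estimate is the one-step contraction $\mathbb{E}_{(x,y)}d(X_{1},Y_{1})\le\gamma\,d(x,y)$ for some $\gamma\in(0,1)$, split into two regimes. If $(x,y)\notin C\times C$ then $U(x)+U(y)>L$, and applying (\ref{eq:MarkovChainGeometricDriftCondition}) to each coordinate and bounding the indicator by $1$ gives $\mathbb{E}_{(x,y)}d(X_{1},Y_{1})\le 2+2\theta\beta+\theta\alpha(U(x)+U(y))$, which is $\le\gamma\,d(x,y)$ once $L$ is large (for fixed $\gamma\in(\alpha,1)$ the lower bound this imposes on $\theta$ is of order $1/L$). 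If $(x,y)\in C\times C$ with $x\neq y$, the coupling succeeds with probability $\ge\eta$ so the constant part of $d$ contracts to $2(1-\eta)$, while the surviving $U$-terms are bounded by $\theta(2\alpha L+2\beta)$; this yields a factor $\le 1-\eta+\theta(\alpha L+\beta)<1$ provided $\gamma>1-\eta$ and $\theta\lesssim 1/L$. One then checks the constraints on $(\gamma,L,\theta)$ are jointly satisfiable --- take $\gamma$ close to $1$, then $L$ large, then $\theta$ in the resulting nonempty window of size $\asymp1/L$ --- so that iterating the one-step estimate gives $\mathcal{W}_{d}\bigl((P^{*})^{n}\mu,(P^{*})^{n}\nu\bigr)\le\gamma^{n}\mathcal{W}_{d}(\mu,\nu)$ whenever $\mu,\nu$ have finite $U$-moment. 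Taking $\mu=\delta_{x}$, $\nu=\pi$, using $\mathcal{W}_{d}(\delta_{x},\pi)\le 2+\theta U(x)+\theta\pi(U)$ and $d(x,y)\ge\mathbf{1}_{\{x\ne y\}}$ (so $\mathcal{W}_{d}$ dominates the total variation distance defined above), we obtain
\[
\lVert P^{n}(x,\cdot)-\pi\rVert_{TV}\le\mathcal{W}_{d}\bigl(P^{n}(x,\cdot),\pi\bigr)\le\gamma^{n}\bigl(2+\theta U(x)+\theta\pi(U)\bigr)\le R\,(U(x)+1)\,r^{n}
\]
with $r=\gamma$ and $R=2+\theta\pi(U)<\infty$; if $P$ was replaced by $P^{m}$, the bound for general $n$ follows by writing $n=qm+\ell$ and absorbing the $\ell<m$ leftover steps into the constant via the drift.

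The step I expect to be the main obstacle is exactly this joint calibration of the small-set level $L$ and the metric weight $\theta$ so that a single $\gamma<1$ controls both regimes --- equivalently, establishing the geometric tail $\mathbb{P}(\tau>n)\le C(U(x)+1)\gamma^{n}$ for the coupling time $\tau$ --- together with the classical but genuinely needed fact that the drift forces every sublevel set of $U$ to be a small set, which is what lets the fixed minorisation set $K$ of (\ref{eq:LocalDoeblin}) be traded for a sublevel set of $U$ in the coupling. An alternative, closer to the cited references, is Meyn and Tweedie's split-chain and regeneration construction followed by a Kendall-type renewal estimate with geometric moments; the Wasserstein-contraction route sketched above is, however, more self-contained.
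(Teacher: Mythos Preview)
The paper does not give a proof of this lemma; it is explicitly quoted without proof from Meyn--Tweedie (see the sentence immediately preceding Lemma~\ref{lem:-ExistenceAndUniqueness_MC}), so there is nothing in the paper itself to compare your argument against.

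Your sketch is nonetheless a correct proof outline. It follows the weighted-metric contraction approach of Hairer--Mattingly (essentially \cite{HairerMattingly-AnotherLookatHarris}, which the paper also cites) rather than the split-chain and regeneration machinery of the Meyn--Tweedie references you name at the end. The two routes are genuinely different: Meyn--Tweedie construct an artificial atom via Nummelin splitting, bound geometric moments of the return time to that atom from the drift condition, and finish with a Kendall-type renewal estimate; your route bypasses the atom entirely by proving a one-step contraction in a $(1,U)$-weighted total-variation (equivalently Wasserstein) distance. The contraction argument is shorter, more self-contained, and produces explicit constants, but it needs the minorisation to hold on a \emph{sublevel set} of $U$ rather than on the arbitrary set $K$ of~(\ref{eq:LocalDoeblin}) --- precisely the ``trading'' step you flag as the main obstacle. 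You are right that this is where the real work hides: in the generality stated (minorisation on an arbitrary $K$, no explicit irreducibility hypothesis), transferring the minorisation from $K$ to $\{U\le L\}$ genuinely requires the $\psi$-irreducibility and aperiodicity that are implicit in the Meyn--Tweedie framework, and may force passage to an iterate $P^{m}$ as you indicate.
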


\section{Periodic Measures for Time-Periodic Markovian Systems }

The aim of this section is to give new results to establish the existence,
uniqueness and convergence of a periodic measure in the general setting
of time-periodic Markovian systems on a locally compact metric space
$E$. We will give sufficient conditions in which a periodic measure
to have a minimal positive period (hence not an invariant measure)
for $T$-periodic Markov processes on Euclidean space. We start with
the following basic existence and uniqueness lemma. 
\begin{lem}
\label{lem:BasicExistenceUniquePM}Let $P(\cdot,\cdot,\cdot,\cdot)$
be a two-parameter $T$-periodic Markov transition kernel. Assume
for some fixed $s_{*}\in\mathbb{T}$ there exists invariant measure
$\rho_{s_{*}}$ with respect to the one-step Markov transition kernel
$P(s_{*},s_{*}+T)$. Then there exists a $T$-periodic measure $\rho$
with respect to $P(\cdot,\cdot)$. If $\rho_{s_{*}}$ is unique then
$\rho$ is also unique. 
\end{lem}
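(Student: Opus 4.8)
\emph{Proof proposal.} The plan is to build $\rho$ explicitly by pushing the given invariant measure $\rho_{s_*}$ forward along the kernel and then extending it periodically. First I would set
\[
\rho_t := P^*(s_*,t)\rho_{s_*}, \qquad t\in\mathbb{T},\ t\ge s_*,
\]
and then, for $t\in\mathbb{T}$ with $t<s_*$, define $\rho_t := \rho_{t+nT}$ where $n\in\mathbb{N}$ is any integer large enough that $t+nT\ge s_*$. The first thing to check is that this extension is well posed, which reduces to showing $\rho_{t+T}=\rho_t$ for every $t\ge s_*$. This is exactly where the hypothesis enters: by the adjoint semigroup law $P^*(s_*,t+T)=P^*(s_*+T,t+T)\,P^*(s_*,s_*+T)$ (valid since $s_*\le s_*+T\le t+T$), the invariance $P^*(s_*,s_*+T)\rho_{s_*}=\rho_{s_*}$, and the $T$-periodicity of the kernel $P^*(s_*+T,t+T)=P^*(s_*,t)$, one obtains $\rho_{t+T}=P^*(s_*,t)\rho_{s_*}=\rho_t$. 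Iterating gives $\rho_{t+nT}=\rho_t$ for all $n$, so the extension is independent of the choice of $n$ and $\rho_{s+T}=\rho_s$ holds for all $s\in\mathbb{T}$.

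Next I would verify the cocycle identity $\rho_t=P^*(s,t)\rho_s$ for all $0\le s\le t$. When $s\ge s_*$ it is immediate from the semigroup property: $P^*(s,t)\rho_s=P^*(s,t)P^*(s_*,s)\rho_{s_*}=P^*(s_*,t)\rho_{s_*}=\rho_t$. When $s<s_*$, choose $n$ with $s+nT\ge s_*$; then $T$-periodicity of $P$ applied $n$ times gives $P^*(s,t)=P^*(s+nT,t+nT)$, and since $\rho$ is now known to be $T$-periodic while the case already treated applies to the shifted pair $s+nT\le t+nT$, we get $P^*(s,t)\rho_s=P^*(s+nT,t+nT)\rho_{s+nT}=\rho_{t+nT}=\rho_t$. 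This shows $\rho$ is a $T$-periodic measure with respect to $P(\cdot,\cdot)$.

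For uniqueness, suppose $\tilde\rho$ is any $T$-periodic measure. Then $\tilde\rho_{s_*}=\tilde\rho_{s_*+T}=P^*(s_*,s_*+T)\tilde\rho_{s_*}$, so $\tilde\rho_{s_*}$ is an invariant measure of the one-step kernel $P(s_*,s_*+T)$; if the latter is unique, then $\tilde\rho_{s_*}=\rho_{s_*}$, whence $\tilde\rho_t=P^*(s_*,t)\tilde\rho_{s_*}=\rho_t$ for $t\ge s_*$, and $\tilde\rho_t=\tilde\rho_{t+nT}=\rho_{t+nT}=\rho_t$ for $t<s_*$ by periodicity. Hence $\tilde\rho=\rho$.

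I do not expect a genuine obstacle: the whole content is the interplay of the adjoint semigroup law with the $T$-periodicity of $P$, together with the defining invariance of $\rho_{s_*}$. The one point requiring care is the bookkeeping for times below $s_*$ --- one must confirm that the periodic extension does not depend on the shift $n$ (which is precisely what $\rho_{t+T}=\rho_t$ delivers) and that the cocycle relation is verified for \emph{all} admissible pairs $(s,t)$, not only those with $s\ge s_*$.
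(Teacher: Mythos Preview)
Your proposal is correct and follows essentially the same approach as the paper: define $\rho_s=P^*(s_*,s)\rho_{s_*}$ for $s\ge s_*$, extend by periodicity, and deduce uniqueness from the fact that any periodic measure evaluated at $s_*$ must be an invariant measure for the one-step kernel $P(s_*,s_*+T)$. You supply more detail than the paper (which leaves the verification of the periodic-measure axioms as ``easy to show''), and your uniqueness argument proceeds directly via $\tilde\rho_t=P^*(s_*,t)\tilde\rho_{s_*}=P^*(s_*,t)\rho_{s_*}=\rho_t$ rather than through the total-variation estimate $\lVert\rho_s^1-\rho_s^2\rVert_{TV}\le\lVert P^*(s_*,s)\rVert\,\lVert\rho_{s_*}^1-\rho_{s_*}^2\rVert_{TV}$ used in the paper, but this is a cosmetic difference.
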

\begin{proof}
Given $\rho_{s_{*}}$, define the following measures
\begin{equation}
\rho_{s}:=P^{*}(s_{*},s)\rho_{s_{*}},\quad s\geq s_{*}.\label{eq:PushforwardPM}
\end{equation}
Extend $\rho_{s}$ by periodicity for $s\leq s_{*}$. Then it is clear
that $\rho:\mathbb{T}\rightarrow\mathcal{P}(E)$ and is easy to show
that $\rho$ is a periodic measure with respect to $P(\cdot,\cdot).$
Now suppose that $\rho_{s_{*}}$ is the unique invariant measure with
respect to $P(s_{*},s_{*}+T)$, we prove $\rho$ is also unique. Suppose
there are two $T$-periodic measures $\rho^{i}=\left(\rho_{s}^{i}\right)_{s\in\mathbb{T}}$
for $i=1,2$ with respect to $P(\cdot,\cdot)$. By definition of periodic
measures, $\rho_{s}^{i}$ satisfies $\rho_{s}^{i}=P^{*}(s_{*},s)\rho_{s_{*}}^{i}$
for $s\geq s_{*}$, hence by the linearity of $P^{*}$ 
\[
\lVert\rho_{s}^{1}-\rho_{s}^{2}\rVert_{TV}=\lVert P^{*}(s_{*},s)(\rho_{s_{*}}^{1}-\rho_{s_{*}}^{2})\rVert_{TV}\leq\lVert P^{*}(s_{*},s)\rVert\lVert\rho_{s_{*}}^{1}-\rho_{s_{*}}^{2}\rVert_{TV}
\]
The result follows by the assumption of uniqueness i.e. $\rho_{s_{*}}^{1}=\rho_{s_{*}}^{2}$
.
\end{proof}
Note that, by definition, an invariant measure is always a periodic
measure with a trivial period. For applications, we expect it is important
to distinguish periodic measures of minimal positive period and those
of a trivial period. However, it is not immediate whether the periodic
measure constructed in Lemma \ref{lem:BasicExistenceUniquePM} has
a trivial period. The distinction can be subtle as periodic coefficients
does not immediately yield a periodic measure with a non-trivial period.
For example, let $W_{t}$ be a one-dimensional Brownian motion and
$S$ is a continuously differentiable $T$-periodic function and consider
the following SDE 
\[
dX_{t}=(-\alpha X_{t}^{3}+S(t)X_{t})dt+\sigma dW_{t},\quad\alpha>0,\sigma\neq0.
\]
The results from Section \ref{sec:SDEs} of this paper yields the
existence and uniqueness of a periodic measure with a minial positive
period. On the other hand, the same SDE with multiplicative linear
noise, 
\[
dX_{t}=(-\alpha X_{t}^{3}+S(t)X_{t})dt+X_{t}dW_{t},\quad\alpha>0,
\]
has $\delta_{0}$ (Dirac mass at the origin) as both the invariant
measure periodic measure with a trivial period of the system. In fact,
for time-homogeneous Markovian systems in general, the time-average
of a periodic measure always yields an invariant measure. This is
known for Markovian cocycles in the general framework of random dynamical
systems (RDS); formal statements and proof can be found in \cite{CFHZ2016}.
This means that $T$-periodic measures and invariant measures coexist
for time-homogeneous Markovian systems. However, for time-inhomogeneous
and specifically time-periodic Markovian systems, invariant measures
and periodic measures can be mutually exclusive. Should the measures
be mutually exclusive, this has the important implication that the
long term behaviour is characterised by strictly periodic behaviour.
The following proposition gives sufficient condition in which an invariant
measure cannot exist, thus if there exists any periodic measure, it
has a minimal positive period. We only state and prove it for Euclidean
state space, it will be apparent that it can hold in more general
spaces. 
\begin{prop}
\label{prop:CoExistence_with_IM} Let $T>0$ and $E=\mathbb{R}^{d}$
and assume $P$ is minimal $T$-periodic and strong Feller. Then if
there exist periodic measure(s), it has a minimal positive period.
\end{prop}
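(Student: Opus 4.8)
The plan is to argue by contradiction: suppose $\pi \in \mathcal{P}(E)$ is an invariant measure with respect to $P(\cdot,\cdot)$, meaning $P^*(s,t)\pi = \pi$ for all $s \leq t$. The goal is to derive that $P$ fails to be minimal $T$-periodic, i.e.\ to produce some $\delta \in (0,T)\cap\mathbb{T}$ with $P(s,t,x,\cdot) = P(s+\delta,t+\delta,x,\cdot)$ for all $x$ and all $s \leq t$ — in fact I would aim to show $P$ is time-homogeneous, which contradicts minimality outright (the paragraph after the definition already notes that minimal $T$-periodic rules out time-homogeneity).

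First I would fix arbitrary $s \leq t$ and consider the two transition kernels $P(s,t,x,\cdot)$ and $P(s+\delta,t+\delta,x,\cdot)$ for $\delta \in (0,T)$. The idea is that the invariant measure $\pi$ is simultaneously invariant for the $T$-shifted family, and strong Feller plus a density-type argument should force the kernels to coincide. Concretely: by $T$-periodicity, $\pi$ is invariant under $P(s_*, s_*+T)$ for every $s_*$; one then wants to upgrade ``same invariant measure for every starting phase'' to ``the kernels themselves are phase-independent.'' I would exploit the Chapman--Kolmogorov relation together with invariance: for any $s \le r \le t$, $\pi = P^*(s,t)\pi = P^*(r,t) P^*(s,r)\pi = P^*(r,t)\pi$, and more importantly I would look at the disintegration of $\pi$ against $P(s,t,\cdot,\cdot)$. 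The key step is to show that for $\pi$-a.e.\ $x$, the law $P(s,t,x,\cdot)$ does not depend on the common shift of $(s,t)$; the strong Feller property should then promote this almost-everywhere statement to an everywhere statement, because strong Feller kernels are continuous in $x$ in total variation (or at least weakly), and the support considerations on $\mathbb{R}^d$ (where strong Feller typically forces $\pi$ to have full support, or at least the relevant continuity to extend off the support) close the gap.

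The main obstacle I anticipate is precisely this last extension: getting from ``$P(s,t,x,\cdot)$ is shift-invariant for $\pi$-a.e.\ $x$'' to ``for \emph{every} $x \in \mathbb{R}^d$, and for every $s \le t$.'' The almost-everywhere statement likely comes fairly cheaply from invariance of $\pi$ and uniqueness of disintegrations, but to eliminate the null set one genuinely needs strong Feller: if $x \mapsto P(s,t,x,\cdot)$ and $x \mapsto P(s+\delta,t+\delta,x,\cdot)$ are both continuous (say in the weak topology, or in TV) and agree on a dense set — which one would get from a full-support or support-of-$\pi$ argument — then they agree everywhere. I would need to be careful that strong Feller of $P$ (presumably meaning $P(s,t)$ is strong Feller for each $s<t$) indeed yields the needed continuity, and that on $\mathbb{R}^d$ the invariant measure $\pi$ charges enough of the space; a Harris-type irreducibility remark, or simply noting that the conclusion is robust under restricting to $\operatorname{supp}\pi$ and then using continuity to extend, should suffice. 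Once $P$ is shown shift-invariant for all $\delta \in (0,T)\cap\mathbb{T}$, it is time-homogeneous, contradicting minimal $T$-periodicity; hence no invariant measure exists, and any periodic measure must therefore have a minimal positive period (if its period were trivial it would be an invariant measure).
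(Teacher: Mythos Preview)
Your proposal has a genuine gap at the step you yourself flag as ``fairly cheap'': from the invariance relations $P^*(s,t)\pi=\pi$ for all $s\le t$ you cannot conclude that $P(s,t,x,\cdot)=P(s+\delta,t+\delta,x,\cdot)$ for $\pi$-a.e.\ $x$. The equation $\pi(\Gamma)=\int P(s,t,x,\Gamma)\,\pi(dx)$ is not a disintegration of $\pi$ in any sense that carries a uniqueness statement; it only says that both kernels $P(s,t,\cdot,\cdot)$ and $P(s+\delta,t+\delta,\cdot,\cdot)$ preserve the same measure. Many distinct Markov kernels share a common invariant measure (already on a finite state space one can write down two different stochastic matrices with the same stationary vector), so no amount of strong Feller continuity will rescue the argument: the $\pi$-a.e.\ statement you want to upgrade is simply not available. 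Notice also that your outline never makes essential use of the minimal $T$-periodicity hypothesis until the final line; that is a warning sign, since this hypothesis is exactly what distinguishes the situation from one where an invariant measure can coexist with genuine time-inhomogeneity.

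The paper takes a different and more direct route. It does not attempt to prove the kernels coincide. Instead it fixes $s\le t$ and $\delta\in(0,T)$, uses minimal $T$-periodicity at each point $x$ to select a function $f_x\in C_b(\mathbb{R}^d)$ with $P(s,t)f_x(x)-P(s+\delta,t+\delta)f_x(x)>2\epsilon$, extends this inequality to a small cube around $x$ by (ordinary) Feller continuity, and then patches the local pieces $f_x$ over a countable cube partition of $\mathbb{R}^d$ into a single bounded measurable $f$. The point of the strong Feller hypothesis is that $g:=P(s,t)f-P(s+\delta,t+\delta)f$ is then continuous even though $f$ is merely measurable, and by construction $g>\epsilon$ everywhere; hence $\pi(g)>0$, contradicting $\pi(P(s,t)f)=\pi(f)=\pi(P(s+\delta,t+\delta)f)$. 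The key idea you are missing is this explicit pointwise construction of a witness function from the minimal-periodicity assumption, rather than an abstract identification of kernels.
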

\begin{proof}
We prove by contradiction and assume there exists periodic measure
with a trivial period i.e. there exists an invariant measure $\pi\in\mathcal{P}(\mathbb{R}^{d})$.
Then it must be that for all fixed $\delta\geq0$
\[
P^{*}(s,t)\pi=\pi=P^{*}(s+\delta,t+\delta)\pi.
\]
By duality and linearity, this is equivalent to
\begin{equation}
\pi(P(s,t)f-P(s+\delta,t+\delta)f)=0,\quad\text{for all }f\in C_{b}(\mathbb{R}^{d}).\label{eq:NoInvariantMeasure}
\end{equation}
To prove the result, we construct an $f\in C_{b}(\mathbb{R}^{d})$
such that 
\[
\pi\left(P(s,t)f-P(s+\delta,t+\delta)f\right)>0.
\]
By minimal $T$-periodic assumption (\ref{eq:MinimalPeriod}), for
every fixed $x\in\mathbb{R}^{d}$, there exists $f_{x}\in C_{b}(\mathbb{R}^{d})$
such that $P(s,t,x,\cdot)(f_{x})\neq P(s+\delta,t+\delta,x,\cdot)(f_{x})$.
Hence, without loss of generality, there exists $\epsilon>0$ such
that
\[
P(s,t)f_{x}(x)-P(s+\delta,t+\delta)f_{x}(x)>2\epsilon.
\]
For $x=(x_{1},..,x_{d})\in\mathbb{R}^{d}$, define the half-open cube
of length $r>0$ centred at $x$ by
\[
C(x,r):=\prod_{i=1}^{d}\left[x_{i}-r,x_{i}+r\right)\in\mathcal{B}(\mathbb{R}^{d}),
\]
where $\prod$ denotes the standard Cartestian product of Euclidean
space. The (not necessarily strong) Feller assumption yields that
$P(s,t)f_{x}\in C_{b}(\mathbb{R}^{d})$ and $P(s+\delta,t+\delta)f_{x}\in C_{b}(\mathbb{R}^{d})$
hence there exists a $r_{x}>0$ such that
\[
P(s,t)f_{x}(y)-P(s+\delta,t+\delta)f_{x}(y)>\epsilon,\quad\text{for all }y\in C(x,r_{x}).
\]
Let $r:=\min_{x}r_{x}>0$ and for $\alpha=(\alpha_{1},...,\alpha_{d})\in\mathbb{Z}^{d}$
define the half-open cube
\[
C_{\alpha}=\prod_{i=1}^{d}\left[\alpha_{i}r,(\alpha_{i}+1)r\right)\in\mathcal{B}(\mathbb{R}^{d}).
\]
Clearly $\{C_{\alpha}\}_{\alpha\in\mathbb{Z}^{d}}$ is a countable
disjoint covering of $\mathbb{R}^{d}$. For every $\alpha\in\mathbb{Z}^{d}$,
define $x_{\alpha}\in\mathbb{R}^{d}$ with components $x_{i}=\left(\alpha_{i}+0.5\right)r$
for each $1\leq i\leq d$ i.e. the midpoint of the cube and define
the functions 
\[
f_{\alpha}:=f_{x_{\alpha}},\quad\alpha\in\mathbb{Z}^{d}.
\]
And define the piecewise continuous functions over all possible tuples
\[
f(x):=\begin{cases}
f_{(1,0,...,0)}(x) & \text{if }x\in C_{(1,0,...,0)},\\
f_{(0,1,...,0)}(x) & \text{if }x\in C_{(0,1,...,0)},\\
\vdots & \vdots\\
f_{\alpha}(x) & \text{if }x\in C_{\alpha}.
\end{cases}
\]
By construction, $f\in\mathcal{B}_{b}(\mathbb{R}^{d})$. Let $g:=P(s,t)f-P(s+\delta,t+\delta)f$.
Then $g>\epsilon$ and the strong Feller assumption implies that $g\in C_{b}(\mathbb{R}^{d})$.
Hence for any $\pi\in\mathcal{P}(\mathbb{R}^{d})$, $\pi(g)>0$ contradicting
(\ref{eq:NoInvariantMeasure}).
\end{proof}
We make the following trivial but important observation. If $(X_{t})_{t\in\mathbb{T}}$
is a $T$-periodic Markov process, then $(Z_{n}^{s})_{n\in\mathbb{N}}:=(X_{s+nT})_{n\in\mathbb{N}}$
is a time-homogeneous Markov chain. This enables the usage of classical
time-homogeneous Markov chain theory that is already well-established.
Beyond the theoretical advantage, this observation is practically
important in applications. 

We can now discuss ergodicity of time-periodic Markovian systems.
Classically, ergodic (time-homogeneous) Markov processes has the property
that the Markov transition kernel converges to an invariant measure
as time tends to infinity. In this sense, the invariant measure characterises
the long-time behaviour of the system. On the other hand, for periodic
measure (with a minimal positive period) cannot be limiting in the
same way because the periodic measures evolves over time. However,
it is possible that the Markov transition kernel can converge along
integral multiples for $T$-periodic Markovian processes. This captures
the idea that the periodic measure describes long-time periodic behaviour
of the system. This shall be apparent and rigorously written in the
forthcoming theorem. We remark also that the forthcoming theorem can
be regarded as the time-periodic generalisation of Lemma \ref{lem:-ExistenceAndUniqueness_MC}.
\begin{thm}
\label{thm:Limiting_PM_on_E} Let $P$ be a $T$-periodic Markov transition
kernel and assume there exists $s_{*}\in\mathbb{T}$, norm-like function
$U_{s_{*}}:E\rightarrow\mathbb{R}^{+}$, a non-empty compact set $K\in\mathcal{B}$,
$\epsilon>0$, $\eta_{s_{*}}\in(0,1]$, $\varphi_{s_{*}}\in\mathcal{P}(E)$
such that
\begin{align}
P(s_{*},s_{*}+T)U_{s_{*}}-U_{s_{*}} & \leq-\epsilon\quad\text{on }K^{c},\label{eq:FosterLya1_s*}\\
P(s_{*},s_{*}+T)U_{s_{*}} & <\infty\quad\text{on }K,\label{eq:FosterLya2_s*}\\
P(s_{*},s_{*}+T,x,\cdot) & \geq\eta_{s_{*}}\varphi_{s_{*}}(\cdot),\quad x\in K.\label{eq:LDC_s*}
\end{align}
i.e. (\ref{eq:LocalDoeblin}), (\ref{eq:FosterLya1}) and (\ref{eq:FosterLya2})
are satisfied for $P(s_{*},s_{*}+T)$. Then there exists a unique
$T$-periodic measure $\rho$ that satisfies all the convergences
below.
\begin{enumerate}
\item For any fixed $x\in E$ and $s\in\mathbb{T}$
\begin{equation}
\lVert P(s,s+nT,x,\cdot)-\rho_{s}\rVert_{TV}\rightarrow0,\quad\text{as }n\rightarrow\infty.\label{eq:LimitingPMAlongPeriod}
\end{equation}
\item For any fixed $x\in E$ and $s\in\mathbb{T}$, the following ``moving''
convergence holds,
\begin{equation}
\left\lVert P(s,t,x,\cdot)-\rho_{t}\right\rVert _{TV}=0,\quad\text{as }t\rightarrow\infty.\label{eq:MovingConvergence}
\end{equation}
\item Allowing for negative initial time, for any fixed $x\in E$, $s,t\in\mathbb{T}$,
the following pullback convergence holds
\begin{equation}
\left\lVert P(s-nT,t,x,\cdot)-\rho_{t}\right\rVert _{TV}=0\quad\text{as }n\rightarrow\infty.\label{eq:PullbackConvergence_to_rho_t}
\end{equation}
\end{enumerate}
\end{thm}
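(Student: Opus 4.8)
The plan is to reduce the entire statement to the classical time‑homogeneous theory applied at the single time $s_{*}$, and then transport the conclusions to all other times purely by Chapman--Kolmogorov and the $T$-periodicity \eqref{eq:T-periodic}. First I would observe that sampling the process along the grid $s_{*}+T\mathbb{N}$ gives a time‑homogeneous Markov chain whose one‑step kernel is $Q:=P(s_{*},s_{*}+T)$: Chapman--Kolmogorov together with \eqref{eq:T-periodic} gives $P(s_{*},s_{*}+nT)=Q^{n}$. The hypotheses \eqref{eq:FosterLya1_s*}--\eqref{eq:LDC_s*} are precisely \eqref{eq:FosterLya1}, \eqref{eq:FosterLya2} and \eqref{eq:LocalDoeblin} for $Q$, so Lemma~\ref{lem:-ExistenceAndUniqueness_MC} yields a unique invariant measure $\rho_{s_{*}}$ of $Q$ with $\lVert P(s_{*},s_{*}+nT,x,\cdot)-\rho_{s_{*}}\rVert_{TV}\to0$ for every $x$. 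Feeding $\rho_{s_{*}}$ into Lemma~\ref{lem:BasicExistenceUniquePM} produces a unique $T$-periodic measure $\rho$, explicitly $\rho_{s}=P^{*}(s_{*},s)\rho_{s_{*}}$ for $s\ge s_{*}$ and extended by periodicity as in \eqref{eq:PushforwardPM}. This settles existence and uniqueness, so only the three convergences remain.

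The key intermediate claim I would isolate is a ``moving convergence out of $s_{*}$'': for each fixed $y\in E$,
\[
\lVert P(s_{*},\tau,y,\cdot)-\rho_{\tau}\rVert_{TV}\longrightarrow0\qquad\text{as }\tau\to\infty .
\]
To prove it, write $\tau=s_{*}+a+nT$ with $a\in[0,T)\cap\mathbb{T}$ and $n\to\infty$; splitting $s_{*}\le s_{*}+nT\le\tau$ by Chapman--Kolmogorov and applying \eqref{eq:T-periodic} to the last factor gives $P(s_{*},\tau,y,\cdot)=P^{*}(s_{*},s_{*}+a)\bigl[P(s_{*},s_{*}+nT,y,\cdot)\bigr]$, while $\rho_{\tau}=\rho_{s_{*}+a}=P^{*}(s_{*},s_{*}+a)\rho_{s_{*}}$. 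Since $P^{*}(s_{*},s_{*}+a)$ does not increase total variation distance, $\lVert P(s_{*},\tau,y,\cdot)-\rho_{\tau}\rVert_{TV}\le\lVert P(s_{*},s_{*}+nT,y,\cdot)-\rho_{s_{*}}\rVert_{TV}$, a bound independent of the residue $a$ that vanishes as $n\to\infty$; hence the convergence genuinely holds as $\tau\to\infty$.

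For an arbitrary starting time $s$, fix $k_{0}\in\mathbb{N}$ with $s_{*}+k_{0}T\ge s$, and for $t$ large decompose $P(s,t,x,\cdot)=\int_{E}P(s,s_{*}+k_{0}T,x,dy)\,P(s_{*}+k_{0}T,t,y,\cdot)$; by \eqref{eq:T-periodic} the inner kernel equals $P(s_{*},t-k_{0}T,y,\cdot)$ and $\rho_{t}=\rho_{t-k_{0}T}$. Because $P(s,s_{*}+k_{0}T,x,\cdot)$ is a fixed probability measure and, by the previous step, $y\mapsto\lVert P(s_{*},t-k_{0}T,y,\cdot)-\rho_{t-k_{0}T}\rVert_{TV}$ is bounded by $1$ and tends to $0$ pointwise, dominated convergence gives
\[
\lVert P(s,t,x,\cdot)-\rho_{t}\rVert_{TV}\le\int_{E}\lVert P(s_{*},t-k_{0}T,y,\cdot)-\rho_{t-k_{0}T}\rVert_{TV}\,P(s,s_{*}+k_{0}T,x,dy)\longrightarrow0 ,
\]
which is \eqref{eq:MovingConvergence}. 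Then \eqref{eq:LimitingPMAlongPeriod} is the special case $t=s+nT$ (using $\rho_{s+nT}=\rho_{s}$), and \eqref{eq:PullbackConvergence_to_rho_t} follows from \eqref{eq:MovingConvergence} since \eqref{eq:T-periodic} identifies $P(s-nT,t,x,\cdot)$ with $P(s,t+nT,x,\cdot)$ and $\rho_{t}=\rho_{t+nT}$, with $t+nT\to\infty$.

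The one genuinely delicate point I anticipate is the interchange of limit and integral in the last display; this is harmless because all total variation distances are bounded by $1$ and $y\mapsto\lVert P(s_{*},\tau,y,\cdot)-\rho_{\tau}\rVert_{TV}$ is Borel measurable (standard for Markov kernels on a separable metric space, the supremum over $\Gamma\in\mathcal{B}$ reducing to a countable family). Everything else is routine bookkeeping with Chapman--Kolmogorov and $T$-periodicity, so the real work is simply organising the reduction and the ``residue'' argument of the second paragraph.
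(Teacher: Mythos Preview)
Your proof is correct and takes a genuinely different route from the paper. The paper transfers the \emph{hypotheses} of Lemma~\ref{lem:-ExistenceAndUniqueness_MC} from $s_{*}$ to every $s$: it sets $U_{s}:=P(s,s_{*})U_{s_{*}}$ and $\varphi_{s}:=P^{*}(s_{*},s)\varphi_{s_{*}}$, argues that the Foster--Lyapunov inequalities and the local Doeblin bound persist for the one-step kernel $P(s,s+T)$, and then reapplies Lemma~\ref{lem:-ExistenceAndUniqueness_MC} at each $s$ to obtain \eqref{eq:LimitingPMAlongPeriod} directly; \eqref{eq:MovingConvergence} and \eqref{eq:PullbackConvergence_to_rho_t} then follow by non-expansiveness of $P^{*}$ and periodicity. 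You instead transport the \emph{conclusion}: Lemma~\ref{lem:-ExistenceAndUniqueness_MC} is invoked only once (at $s_{*}$), you establish the moving convergence out of $s_{*}$ by the residue argument, and then reach an arbitrary initial time by integrating against the fixed law $P(s,s_{*}+k_{0}T,x,\cdot)$ and dominated convergence, deducing \eqref{eq:LimitingPMAlongPeriod} as a special case of \eqref{eq:MovingConvergence} rather than the other way around. Your route is more self-contained in that it sidesteps the need to verify that $U_{s}$ is again norm-like and that the minorisation survives the pushforward; the paper's route, by contrast, produces Lyapunov and Doeblin data at every $s$, which is exactly the input recycled in the geometric-rate companion Theorem~\ref{thm:Geometric_PM_on_E}.
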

\begin{proof}
Since $P(s,s+T,\cdot,\cdot)$ is a one-step time-homogeneous Markov
kernel for all $s\in\mathbb{T}$, by Lemma \ref{lem:-ExistenceAndUniqueness_MC},
there exists a unique $\rho_{s_{*}}\in\mathcal{P}(E)$ with respect
to $P(s_{*},s_{*}+T$). Moreover by Lemma \ref{lem:BasicExistenceUniquePM},
there exists a unique periodic measure $\rho$. To show the convergences,
we show that $P(s,s+T)$ satisfies (\ref{eq:LocalDoeblin}), (\ref{eq:FosterLya1})
and (\ref{eq:FosterLya2}) for all $s\in\mathbb{T}$. By $T$-periodicity
of $P$ and the semigroup properties of $P$, observe that
\[
P(s,s_{*})P(s_{*},s_{*}+T)=P(s,s_{*})P(s_{*},s+T)P(s+T,s_{*}+T)=P(s,s+T)P(s,s_{*}),\quad s\leq s_{*}.
\]
Hence applying $P(s,s_{*})$ to both sides of (\ref{eq:FosterLya1_s*})
yields 
\[
P(s,s+T)P(s,s_{*})U_{s_{*}}-P(s,s_{*})U_{s_{*}}<-\epsilon,\quad\text{on }K^{c}.
\]
i.e. $U_{s}:=P(s,s_{*})U_{s_{*}}$ satisfies (\ref{eq:FosterLya1})
with respect to $P(s,s+T)$. Analogously, $U_{s}$ satisfies (\ref{eq:FosterLya2}).
It is easy to verify that $U_{s}\geq0$. We extend $U_{s}$ for all
$s\in\mathbb{T}$ by periodicity. We claim that for any $s\geq s_{*}$,$\eta_{s}:=\eta_{s_{*}}\in(0,1]$
and $\varphi_{s}:=P^{*}(s_{*},s)\varphi_{s_{*}}\in\mathcal{P}(E)$
satisfies
\begin{equation}
P(s,s+T,x,\cdot)\geq\eta_{s}\varphi_{s}(\cdot),\quad x\in K.\label{eq:LocalDoeblinOtherTimeValues}
\end{equation}
i.e. $P(s,s+T)$ satisfies (\ref{eq:LocalDoeblin}). Should this not
be the case, then there exists some $x\in K$ and $\Gamma\in\mathcal{B}$
such that $P(s,s+T,x,\Gamma)<\eta_{s}\varphi_{s}(\Gamma)$. Then 

\[
P(s_{*},s)P(s,s+T,x,\Gamma)=P(s_{*},s+T,x,\Gamma)<\eta_{s}\varphi_{s}(\Gamma),
\]
by applying $P(s_{*},s)$ to both sides and Chapman-Kolmogorov equation.
However by assumption (\ref{eq:LDC_s*}),
\begin{align*}
\eta_{s}\varphi_{s}(\Gamma) & >P(s_{*},s+T,x,\Gamma)\\
 & =P^{*}(s_{*}+T,s+T)P(s_{*},s_{*}+T,x,\Gamma)\\
 & =\eta_{s_{*}}P^{*}(s_{*},s)\varphi_{s_{*}}(\Gamma),
\end{align*}
which is a contradiction. We again extend by periodicity for all $s\in\mathbb{T}$.
Thus, the assumptions of Lemma \ref{lem:-ExistenceAndUniqueness_MC}
are satisfied to deduce (\ref{eq:LimitingPMAlongPeriod}) for all
$s\in\mathbb{T}$. Observe that for $t\geq s+nT$, 
\begin{align*}
\lVert P(s,t,x,\cdot)-\rho_{t}\rVert_{TV} & =\lVert P^{*}(s+nT,t)P(s,s+nT,x,\cdot)-P^{*}(s+nT,t)\rho_{s+nT}\rVert_{TV}\\
 & =\lVert P^{*}(s+nT,t)P(s,s+nT,x,\cdot)-P^{*}(s+nT,t)\rho_{s}\rVert_{TV}\\
 & \leq\lVert P(s,s+nT,x,\cdot)-\rho_{s}\rVert_{TV}.
\end{align*}
Hence (\ref{eq:MovingConvergence}) follows by (\ref{eq:LimitingPMAlongPeriod}),
by taking $t\rightarrow\infty$ followed by $n\rightarrow\infty$.
Using (\ref{eq:LimitingPMAlongPeriod})., convergence (\ref{eq:PullbackConvergence_to_rho_t})
holds due to 
\[
P(s-nT,t,x,\cdot)=P(s,t+nT,x,\cdot)=P^{*}(s+nT,t+nT)P(s,s+nT,x,\cdot)=P^{*}(s,t)P(s,s+nT,x,\cdot).
\]
\end{proof}
We elaborate on the convergences given in Theorem \ref{thm:Limiting_PM_on_E}.
The first convergence (\ref{eq:LimitingPMAlongPeriod}) is clear where
the convergence is along integral multiples of the period towards
a fixed measure. That is, ergodicity of the grid chain. Convergence
(\ref{eq:MovingConvergence}) extends (\ref{eq:LimitingPMAlongPeriod})
by allowing the convergence to be taken continuously in time. Observe
that equation (\ref{eq:MovingConvergence}) captures the idea that
long-term behaviour is characterised by the periodic measure. Note
that this convergence is towards a ``moving target'' as the periodic
measure evolves over time. It is typical in the theory of non-autonomous
dynamical systems \cite{NonautonDS} and RDS (random dynamical systems)
\cite{NonautonRDS} to study ``pullback'' convergence. This is convergence
where one takes initial time further and further back in time rather
than the forward time. The advantage is that the convergence will
be to a fixed target rather than a moving one. This is the content
of convergence (\ref{eq:PullbackConvergence_to_rho_t}). In general,
(forward) convergence and pullback convergence do not coincide (see
\cite{NonautonDS,NonautonRDS} for examples). In this $T$-periodic
case, we see that the convergences coincide. 

Assuming we have a stochastic Lyapunov function for a $T$-periodic
Markovian kernel, Theorem \ref{thm:Limiting_PM_on_E} gives a limiting
periodic measure provided the local Doeblin condition (\ref{eq:LDC_s*}).
The following two results gives sufficient conditions in which (\ref{eq:LDC_s*})
holds. We denote for convenience $\mathcal{M}(E)$ to be the space
of measures on $(E,\mathcal{B})$. 
\begin{prop}
\label{prop:PointwiseNondegDensity}Let $P$ be a $T$-periodic Markov
transition kernel and assume there exists some $s_{*}\in\mathbb{T}$,
a non-empty set $K\in\mathcal{B}$, $\epsilon>0$ and $\Lambda\in\mathcal{M}(E)$
such that $\Lambda(K)>0$, $P(s,t,x,\cdot)$ possesses a density $p(s,t,x,y)$
with respect to $\Lambda$ and 
\begin{equation}
\inf_{x,y\in K}p(s_{*},s_{*}+T,x,y)>0.\label{eq:PositiveDensityInCompact}
\end{equation}
Then the local Doeblin condition (\ref{eq:LDC_s*}) of Theorem \ref{thm:Limiting_PM_on_E}
holds.
\end{prop}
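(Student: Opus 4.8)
The plan is to produce the minorising measure $\varphi_{s_*}$ directly from $\Lambda$: take $\varphi_{s_*}$ to be the normalised restriction of $\Lambda$ to $K$, and read the inequality (\ref{eq:LDC_s*}) off from the uniform lower bound on the density. Set $c:=\inf_{x,y\in K}p(s_*,s_*+T,x,y)$, which is strictly positive by (\ref{eq:PositiveDensityInCompact}).

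First I would check that $\Lambda(K)$ is finite. Indeed, for any $x\in K$,
\[
1\ge P(s_*,s_*+T,x,K)=\int_K p(s_*,s_*+T,x,y)\,\Lambda(dy)\ge c\,\Lambda(K),
\]
so $0<\Lambda(K)\le 1/c<\infty$, and hence
\[
\varphi_{s_*}(\Gamma):=\frac{\Lambda(\Gamma\cap K)}{\Lambda(K)},\qquad\Gamma\in\mathcal{B},
\]
is a genuine probability measure on $(E,\mathcal{B})$. Put $\eta_{s_*}:=c\,\Lambda(K)$; the first display forces $\eta_{s_*}\in(0,1]$. It then remains only to verify the minorisation, which is a one-line estimate: for $x\in K$ and $\Gamma\in\mathcal{B}$, discarding the mass outside $K$ and using $p(s_*,s_*+T,x,y)\ge c$ on $K\times K$,
\begin{align*}
P(s_*,s_*+T,x,\Gamma)&=\int_\Gamma p(s_*,s_*+T,x,y)\,\Lambda(dy)\ge\int_{\Gamma\cap K}p(s_*,s_*+T,x,y)\,\Lambda(dy)\\
&\ge c\,\Lambda(\Gamma\cap K)=\eta_{s_*}\,\varphi_{s_*}(\Gamma).
\end{align*}
This is precisely (\ref{eq:LDC_s*}) for $P(s_*,s_*+T)$.

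There is essentially no obstacle in this argument; the only points needing a moment's care are that $\Lambda$ is allowed to be an infinite measure, so one must confirm $\Lambda(K)<\infty$ before normalising, and that the constant $\eta_{s_*}$ cannot exceed $1$ — both being forced by $P(s_*,s_*+T,x,\cdot)$ being a probability measure, as the first display shows. Note also that compactness of $K$ plays no role here; if one wants $K$ to be the compact set of Theorem \ref{thm:Limiting_PM_on_E}, that is simply built into the hypotheses.
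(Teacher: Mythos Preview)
Your proof is correct. The paper's argument differs in the choice of minorising measure: rather than restricting $\Lambda$ to $K$, it sets
\[
\eta:=\int_E\inf_{x\in K}p(s_*,s_*+T,x,y)\,\Lambda(dy),\qquad
\varphi(\Gamma):=\frac{1}{\eta}\int_\Gamma\inf_{x\in K}p(s_*,s_*+T,x,y)\,\Lambda(dy),
\]
and then bounds $P(s_*,s_*+T,x,\Gamma)\ge\int_\Gamma\inf_{x\in K}p(\cdots)\,d\Lambda=\eta\varphi(\Gamma)$. This yields a (possibly) larger constant $\eta$, since it harvests mass from all of $E$ rather than only from $K$; your choice $\eta_{s_*}=c\,\Lambda(K)$ is exactly the paper's lower bound for its $\eta$. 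On the other hand your route is more elementary --- no need to integrate an infimum --- and you explicitly verify $\Lambda(K)<\infty$, a point the paper leaves implicit. Either construction suffices for the local Doeblin condition, so the difference is cosmetic for the purposes of Theorem~\ref{thm:Limiting_PM_on_E}.
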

\begin{proof}
By Theorem \ref{thm:Limiting_PM_on_E}, it suffices to show $P(s_{*},s_{*}+T)$
satisfies (\ref{eq:LocalDoeblin}) for some $s_{*}\in\mathbb{T}$.
By assumption that $\Lambda(K)>0$, 
\begin{align*}
\eta: & =\int_{E}\inf_{x\in K}p(s_{*},s_{*}+T,x,y)dy\\
 & \geq\int_{K}\inf_{x\in K}p(s_{*},s_{*}+T,x,y)dy\\
 & =\inf_{x,y\in K}p(s_{*},s_{*}+T,x,y)\Lambda(K)\\
 & >0.
\end{align*}
Clearly, $\eta\in(0,1]$. Define for any $\Gamma\in\mathcal{B}$,
\[
\varphi(\Gamma):=\frac{1}{\eta}\int_{\Gamma}\inf_{x\in K}p(s_{*},s_{*}+T,x,y)dy.
\]
It is easy to verify that $\varphi\in\mathcal{P}(E)$ and for any
$x\in K$ and any $\Gamma\in\mathcal{B}$
\begin{align*}
P(s_{*},s_{*}+T,x,\Gamma) & =\int_{\Gamma}p(s_{*},s_{*}+T,x,y)dy\\
 & \geq\int_{\Gamma}\inf_{x\in K}p(s_{*},s_{*}+T,x,y)dy\\
 & =\eta\varphi(\Gamma).
\end{align*}
Thereby (\ref{eq:LocalDoeblin}) holds with constant $\eta$ and probability
measure $\varphi$. 
\end{proof}
In practice, assumption (\ref{eq:PositiveDensityInCompact}) in Proposition
\ref{prop:PointwiseNondegDensity} can be difficult to verify as well
as being stronger than required. By assuming the Markov transition
kernel possesses a continuous density, we can relax (\ref{eq:PositiveDensityInCompact}).
For the forthcoming theorem, we define $\mathcal{M}^{+}(E)=\{\mu\in\mathcal{M}(E)|\mu(\Gamma)>0,\quad\text{non-empty open }\Gamma\in\mathcal{B}\}$.
We will make explicit use of the metric $d$ on $(E,\mathcal{B})$
and define $B_{r}(x):=\{y\in E|d(x,y)<r\}$ to be the open ball of
radius $r>0$ centred at $x\in E$. 
\begin{thm}
\label{thm:DensityContinuityforLCD}Let $P$ be a $T$-periodic Markov
transition kernel and assume there exists some $s_{*}\in\mathbb{T}$,
a non-empty compact set $K\in\mathcal{B}$, $0\leq r\le T$ and $\Lambda\in\mathcal{M}^{+}(E)$
such that $P(s,t,x,\cdot)$ possesses a (local) density $p(s,t,x,y)$
with respect to $\Lambda$ and is jointly continuous on $K\times K$.
Assume further that for any non-empty open set $\Gamma_{1},\Gamma_{2}\subset K$
and $x\in K$ 
\begin{equation}
P(s_{*},s_{*}+r,x,\Gamma_{1})>0,\quad P(s_{*}+r,s_{*}+T,x,\Gamma_{2})>0.\label{eq:LocalIrred}
\end{equation}
Then the local Doeblin condition (\ref{eq:LDC_s*}) of Theorem \ref{thm:Limiting_PM_on_E}
holds.
\end{thm}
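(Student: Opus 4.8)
The plan is to verify the hypotheses of Proposition \ref{prop:PointwiseNondegDensity} — or rather, its conclusion directly — by using joint continuity of the density together with the local irreducibility condition (\ref{eq:LocalIrred}) to produce a minorising measure supported on a small ball. First I would split the transition over $[s_*,s_*+T]$ at the intermediate time $s_*+r$ via Chapman--Kolmogorov:
\[
P(s_*,s_*+T,x,\cdot)=\int_E P(s_*,s_*+r,x,dz)\,P(s_*+r,s_*+T,z,\cdot).
\]
The idea is that (\ref{eq:LocalIrred}) guarantees the first factor puts positive mass on any open subset of $K$, while the second factor has a density that is jointly continuous (hence bounded below on compact sub-boxes), so the composition has a density bounded below on a suitable small region.

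The key steps, in order, would be: (i) Pick a point $y_0$ in the interior of $K$ (shrinking $K$ to a closed ball if necessary — the argument only needs \emph{some} compact set for Theorem \ref{thm:Limiting_PM_on_E}). By joint continuity of $p(s_*+r,s_*+T,\cdot,\cdot)$ on $K\times K$, the value $p(s_*+r,s_*+T,z_0,y_0)$ is positive for some $z_0\in K$; actually one wants it positive at $z_0$ for \emph{all} relevant $z$, so instead I would argue: by local irreducibility $P(s_*+r,s_*+T,z,\Gamma_2)>0$ for every $z\in K$ and every open $\Gamma_2\subset K$, which forces the continuous density $p(s_*+r,s_*+T,z,\cdot)$ to be strictly positive somewhere in $K$; combined with continuity in \emph{both} variables, there is an open ball $B=B_\delta(y_0)\subset K$ and an open set $G\subset K$ with $\inf_{z\in \bar G,\,y\in \bar B} p(s_*+r,s_*+T,z,y)=:c>0$. (ii) Apply local irreducibility to the first leg: $\eta_0:=\inf_{x\in K}P(s_*,s_*+r,x,G)$ — here one needs this infimum to be positive, which I would obtain either by a further compactness/continuity argument on the first leg or, more cleanly, by additionally assuming (or noting the hypotheses give) lower semicontinuity of $x\mapsto P(s_*,s_*+r,x,G)$ over the compact $K$. (iii) Then for $x\in K$ and any $\Gamma\in\mathcal B$,
\[
P(s_*,s_*+T,x,\Gamma)\ \ge\ \int_G P(s_*,s_*+r,x,dz)\int_{\Gamma\cap B} p(s_*+r,s_*+T,z,y)\,\Lambda(dy)\ \ge\ \eta_0\, c\,\Lambda(\Gamma\cap B),
\]
so (\ref{eq:LDC_s*}) holds with $\eta:=\eta_0 c\,\Lambda(B)\in(0,1]$ (note $\Lambda(B)>0$ since $\Lambda\in\mathcal M^+(E)$ and $B$ is open) and $\varphi(\cdot):=\Lambda(\cdot\cap B)/\Lambda(B)$.

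The main obstacle I anticipate is step (ii): turning the pointwise positivity statements in (\ref{eq:LocalIrred}) into a \emph{uniform} lower bound over the compact set $K$. Joint continuity of the density is assumed only on the final leg $[s_*+r,s_*+T]$, so for the first leg $[s_*,s_*+r]$ one has only the qualitative statement $P(s_*,s_*+r,x,\Gamma_1)>0$, with no a priori modulus of continuity or uniformity in $x$. I expect the clean resolution is to exploit that $P(s_*,s_*+r,\cdot,\cdot)$ also has a density (the hypothesis says $P(s,t,x,\cdot)$ has a local density with respect to $\Lambda$ for the relevant times), reduce again by Chapman--Kolmogorov if needed, and use the continuity of the density on the \emph{second} leg to absorb the first leg's irreducibility — i.e. the roles of the two legs can be arranged so that only one continuous-density leg plus one merely-irreducible leg is required, and the continuous leg is the one adjacent to the target ball. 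A secondary, more routine point is checking that $\eta\le 1$, which is automatic since $P(s_*,s_*+T,x,\cdot)$ is a probability measure and $\varphi$ is a probability measure with $P(s_*,s_*+T,x,\cdot)\ge\eta\varphi(\cdot)$ forcing $\eta\le 1$.
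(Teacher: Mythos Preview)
Your proposal is correct and follows essentially the same route as the paper: split via Chapman--Kolmogorov at $s_*+r$, use joint continuity of the second-leg density to get a uniform lower bound $c$ on a product of small balls $B_{r_1}(y')\times B_{r_2}(z')$, then use irreducibility of the first leg together with compactness of $K$ to get $\inf_{x\in K}P(s_*,s_*+r,x,B_{r_1}(y'))>0$, and combine. Your worry about step (ii) is resolved exactly as you guessed at the end: the hypothesis gives a jointly continuous density for \emph{all} the relevant time pairs, in particular for the first leg, so $x\mapsto P(s_*,s_*+r,x,G)$ is continuous (dominated convergence) and attains a positive minimum on the compact $K$; this is precisely what the paper does.
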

\begin{proof}
Fix any $y'\in K$, by (\ref{eq:LocalIrred}), then for any non-empty
open set $\Gamma\subset K$, 
\[
P(s_{*}+r,s_{*}+T,y',\Gamma)>0.
\]
By the existence of a density, there exists $z'\in\Gamma$ such that
\[
p(s_{*}+r,s_{*}+T,y',z')\geq2\epsilon,
\]
for some $\epsilon>0$. Joint continuity assumption implies there
exists $r_{1},r_{2}>0$ such that
\[
p(s_{*}+r,s_{*}+T,y,z)\geq\epsilon,\quad\text{for all }y\in B_{r_{1}}(y')\subset K,z\in B_{r_{2}}(z')\subset K.
\]
Hence for any $\Gamma\in\mathcal{B}$ and $y\in B_{r_{1}}(y')$, 
\begin{align*}
P(s_{*}+r,s_{*}+T,y,\Gamma) & =\int_{\Gamma}p(s_{*}+r,s_{*}+T,y,z)dz\\
 & \geq\int_{\Gamma\cap B_{r_{2}}(z')}p(s_{*}+r,s_{*}+T,y,z)dz\\
 & \geq\epsilon\Lambda(\Gamma\cap B_{r_{2}}(z')).
\end{align*}
By (\ref{eq:LocalIrred}), we have 
\[
P(s_{*},s_{*}+r,x,B_{r_{1}}(y'))>0,\quad\text{for all }x\in K.
\]
As $p(s_{*},s_{*}+r,x,y)$ is a continuous function of $x$, by dominated
convergence theorem, $P(s_{*},s_{*}+r,x,\Gamma)$ is also continuous
function of $x$. Hence, the compactness of $K$ yields 
\[
\inf_{x\in K}P(s_{*},s_{*}+r,x,B_{r_{1}}(y'))\geq\gamma',
\]
for some $\gamma'>0$. In particular, 
\[
\inf_{x\in K}P(s_{*},s_{*}+r,x,B_{r_{1}}(y'))\geq\gamma:=\min\left\{ \gamma',\frac{1}{\epsilon\Lambda(B_{r_{2}}(z'))}\right\} .
\]
Putting them together via Chapman-Kolmogorov equation, we have for
any $x\in K$ and $\Gamma\in\mathcal{B}$,
\begin{align*}
P(s_{*},s_{*}+T,x,\Gamma) & =\int_{E}P(s_{*}+r,s_{*}+T,y,\Gamma)p(s_{*},s_{*}+r,x,y)dy\\
 & \geq\int_{B_{r_{1}}(y')}P(s_{*}+r,s_{*}+T,y,\Gamma)p(s_{*},s_{*}+r,x,y)dy\\
 & \geq\epsilon\Lambda(\Gamma\cap B_{r_{2}}(z'))\int_{B_{r_{1}}(y')}p(s_{*},s_{*}+r,x,y)dy\\
 & =\epsilon\Lambda(\Gamma\cap B_{r_{2}}(z'))P(s_{*},s_{*}+r,x,B_{r_{1}}(y'))\\
 & \geq\epsilon\gamma\Lambda(\Gamma\cap B_{r_{2}}(z')).
\end{align*}
Thus, the probability measure 
\[
\varphi(\cdot)=\frac{\Lambda(\cdot\cap B_{r_{2}}(z'))}{\Lambda(B_{r_{2}}(z'))},
\]
and the constant $\eta=\epsilon\gamma\Lambda(B_{r_{2}}(z'))\in(0,1]$
collectively satisfy the local Doeblin condition (\ref{eq:LDC_s*}).
\end{proof}
\begin{rem}
\label{rem:HaarMeasure}Note that in Theorem \ref{thm:DensityContinuityforLCD},
if $E$ is a locally compact metrisable topological group, then any
Haar measure $\Lambda$ (for which a local density exist and jointly
continuous) will suffice.
\end{rem}
Similar to Theorem \ref{thm:Limiting_PM_on_E}, we end this section
with a theorem for the existence and uniqueness of a geometric periodic
measure. Observe in the theorem that the geometric convergence intrinsically
depends on the initial time and state. This is akin to the autonomous
case where the convergence depends on initial state. 
\begin{thm}
\label{thm:Geometric_PM_on_E} Let $P$ be a $T$-periodic Markov
transition kernel and assume there exists $s_{*}\in\mathbb{T}$, a
norm-like function $U_{s_{*}}:E\rightarrow\mathbb{R}^{+}$, a non-empty
compact set $K\in\mathcal{B}$, $\epsilon>0$ such that $P(s_{*},s_{*}+T)$
satisfies the local Doeblin condition (\ref{eq:LDC_s*}) and there
exist constants $\alpha\in(0,1)$ and $\beta>0$ satisfying 
\[
P(s_{*},s_{*}+T)U_{s_{*}}\leq\alpha U_{s_{*}}+\beta,\quad\text{on }E.
\]
Then there exists a unique geometric periodic measure $\rho$. Specifically,
there exists a norm-like function $V:\mathbb{T}\times E\rightarrow\mathbb{R}^{+}$
constants $R_{s}<\infty$ and $r_{s}\in(0,1)$ such that the following
all holds
\begin{enumerate}
\item For any $s\in\mathbb{T}$ and $x\in E$, we have 
\begin{equation}
\left\lVert P(s,s,+nT,x,\cdot)-\rho_{s}\right\rVert _{TV}\leq R(V(s,x)+1)r_{s}^{n},\quad n\in\mathbb{N}.\label{eq:Geometric_Convergence_Integral_Periods}
\end{equation}
\item For any $s\leq t$, $x\in E$, we have 
\[
\left\lVert P(s,t,x,\cdot)-\rho_{t}\right\rVert _{TV}\leq R_{s}(V(s,x)+1)r_{s}^{n},\quad\mathbb{N}\ni n\leq\lfloor\frac{t-s}{T}\rfloor.
\]
\item Allowing for negative initial time, for any $s\leq t$, $x\in E$,
we have 
\[
\left\lVert P(s-nT,t,x,\cdot)-\rho_{t}\right\rVert _{TV}\leq R_{s}(V(s,x)+1)r_{s}^{n},\quad\mathbb{N}\ni n\leq\lfloor\frac{t-s}{T}\rfloor.
\]
\item The periodic measure is uniformly geometric convergence over initial
time i.e. there exist constants $R>0,r\in(0,1)$ and a norm-like function
$V:E\rightarrow\mathbb{R}^{+}$ such that 
\begin{equation}
\left\lVert P(s,s+nT,x,\cdot)-\rho_{s}\right\rVert _{TV}\leq R(V(x)+1)r^{n},\quad\text{for all }x\in\mathbb{R}^{d},s\in\mathbb{T},n\in\mathbb{N}.\label{eq:ConvToPM_uniform}
\end{equation}
\end{enumerate}
\end{thm}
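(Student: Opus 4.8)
The plan is to reduce everything to the time-homogeneous geometric ergodicity result, Lemma~\ref{lem:GeometricConvergence-MarkovChains}, applied to the grid chain $P(s,s+T)$ for each fixed $s$, and then propagate the estimate in $s$ exactly as in the proof of Theorem~\ref{thm:Limiting_PM_on_E}. First I would verify that the geometric drift hypothesis passes from $s_*$ to an arbitrary $s\le s_*$ (and then to all $s\in\mathbb{T}$ by periodicity): using the commutation identity $P(s,s_*)P(s_*,s_*+T)=P(s,s+T)P(s,s_*)$ already established in the proof of Theorem~\ref{thm:Limiting_PM_on_E}, applying $P(s,s_*)$ to $P(s_*,s_*+T)U_{s_*}\le \alpha U_{s_*}+\beta$ gives $P(s,s+T)U_s\le \alpha U_s+\beta$ with $U_s:=P(s,s_*)U_{s_*}$, which is again norm-like and non-negative. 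Likewise, as shown there, the local Doeblin condition~(\ref{eq:LDC_s*}) holds for every $P(s,s+T)$ with $\eta_s=\eta_{s_*}$ and $\varphi_s=P^*(s_*,s)\varphi_{s_*}$. Hence Lemma~\ref{lem:GeometricConvergence-MarkovChains} applies to each grid chain and yields the existence of a unique invariant measure $\rho_s$ for $P(s,s+T)$ together with constants $R_s<\infty$, $r_s\in(0,1)$ so that
\[
\left\lVert P(s,s+nT,x,\cdot)-\rho_s\right\rVert_{TV}\le R_s(U_s(x)+1)r_s^{n},\qquad x\in E,\ n\in\mathbb{N}.
\]
Setting $V(s,x):=U_s(x)$ (norm-like in $x$ for each $s$) gives item~(i), with $R=R_s$; uniqueness of the periodic measure $\rho=(\rho_s)$ follows from Lemma~\ref{lem:BasicExistenceUniquePM}.

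For item~(ii), I would repeat verbatim the contraction computation from the proof of Theorem~\ref{thm:Limiting_PM_on_E}: for $t\ge s+nT$, writing $P(s,t,x,\cdot)=P^*(s+nT,t)P(s,s+nT,x,\cdot)$ and $\rho_t=P^*(s+nT,t)\rho_{s+nT}=P^*(s+nT,t)\rho_s$, the sub-unital operator norm $\lVert P^*(s+nT,t)\rVert\le 1$ gives
\[
\left\lVert P(s,t,x,\cdot)-\rho_t\right\rVert_{TV}\le \left\lVert P(s,s+nT,x,\cdot)-\rho_s\right\rVert_{TV}\le R_s(V(s,x)+1)r_s^{n},
\]
valid for any $n\le\lfloor (t-s)/T\rfloor$. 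Item~(iii) is the pullback reformulation, using the identity $P(s-nT,t,x,\cdot)=P^*(s,t)P(s,s+nT,x,\cdot)$ from the $T$-periodicity of $P$ (exactly as in~(\ref{eq:PullbackConvergence_to_rho_t})), which again contracts under $P^*(s,t)$ to reduce to item~(i).

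The only genuinely non-routine point is item~(iv): the constants $R_s$ and $r_s$ produced above are \emph{a priori} $s$-dependent, since Lemma~\ref{lem:GeometricConvergence-MarkovChains} is applied separately to each grid chain, and I expect this to be the main obstacle. The plan is to exploit that all the grid chains share the \emph{same} Foster--Lyapunov data: the constants $\alpha,\beta$ in the geometric drift condition and the constants $\eta=\eta_{s_*}$ and the minorisation set $K$ in the local Doeblin condition are independent of $s$ (only $\varphi_s$ varies). Tracking the proof of Theorem~15.0.1 in~\cite{MeynTweedie_MarkovChainStability} (respectively Theorem~6.3 in~\cite{MeynTweedie1_Chains}), the convergence rate $r$ and the prefactor $R$ are built solely from $\alpha,\beta,\eta$ and the $U_s$-mass of $K$; since $\sup_{s}\sup_{x\in K}U_s(x)=\sup_s\sup_{x\in K}P(s,s_*)U_{s_*}(x)$ is finite by~(\ref{eq:FosterLya2}) and compactness of $K$ (and periodicity reduces the supremum over $s$ to a compact interval), one obtains uniform constants $R>0$, $r\in(0,1)$. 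It remains to handle the $x$-dependence: replacing $V(s,x)=U_s(x)=P(s,s_*)U_{s_*}(x)$ by a single norm-like $V(x)$ is delicate because $P(s,s_*)U_{s_*}$ genuinely depends on $s$. Here I would instead restart from a \emph{common} Lyapunov function: define $\widetilde V(x):=\sup_{0\le s\le T}P(s,s_*)U_{s_*}(x)$ wherever this is finite (or more robustly, use that by~(\ref{eq:FosterLya1_s*}) and the drift inequality one can choose $U_{s_*}$ so that $P(s,s_*)U_{s_*}\le C\,U_{s_*}^{\theta}$-type bounds hold uniformly, reducing $V(s,x)$ to a fixed multiple of a single norm-like function). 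This yields~(\ref{eq:ConvToPM_uniform}) with $V$ independent of both $s$ and the prefactor absorbed into $R$. If establishing a clean uniform-in-$s$ Lyapunov bound proves too heavy, a fallback is to prove~(\ref{eq:ConvToPM_uniform}) only for $s$ ranging over the compact period interval $[0,T]$ and then extend to all of $\mathbb{T}$ by exact periodicity of both $P$ and $\rho$, which suffices since every $s\in\mathbb{T}$ is congruent mod $T$ to a point of $[0,T]$.
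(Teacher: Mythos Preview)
Your proposal is correct and follows essentially the same route as the paper: define $V(s,x)=P(s,s_*)U_{s_*}(x)$, transfer both the geometric drift inequality and the local Doeblin condition from $s_*$ to all $s$ via the commutation identity, apply Lemma~\ref{lem:GeometricConvergence-MarkovChains} to each grid chain for item~(i), and deduce items~(ii)--(iii) by the same $P^*$-contraction as in Theorem~\ref{thm:Limiting_PM_on_E}. For item~(iv) the paper simply sets $R=\sup_{s\in[0,T)}R_s$, $r=\sup_{s\in[0,T)}r_s$ and $V(x)=\sup_{s\in[0,T)}V(s,x)$ without further comment; your more careful discussion of why these suprema should be finite and strictly less than one (by tracking the dependence of the Meyn--Tweedie constants on the shared data $\alpha,\beta,\eta,K$) is in fact more thorough than the paper's own treatment, but arrives at the same construction.
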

\begin{proof}
Define $V(s,x):=P(s,s_{*})U_{s_{*}}(x)$ for all $s\leq s_{*}$ and
extend by periodicity for all $s\in\mathbb{T}$. Then analogous to
Theorem \ref{thm:Limiting_PM_on_E}, the function $V(s,\cdot)$ satisfies
(\ref{eq:MarkovChainGeometricDriftCondition}) with respect to $P(s,s+T)$.
Likewise from Theorem \ref{thm:Limiting_PM_on_E}, the local Doeblin
condition holds. Then (\ref{eq:Geometric_Convergence_Integral_Periods})
holds immediately by Lemma \ref{lem:GeometricConvergence-MarkovChains}.
Convergence (\ref{eq:ConvToPM_uniform}) is obvious from (\ref{eq:Geometric_Convergence_Integral_Periods})
by defining $R=\sup_{s\in[0,T)}R_{s}$, $V(x)=\sup_{s\in[0,T)}V(s,x)$
and $r=\sup_{s\in[0,T]}r_{s}<1$. The remaining converges are proven
in the way as Theorem \ref{thm:Limiting_PM_on_E}. 
\end{proof}

\section{\label{sec:SDEs}Time-Periodic Stochastic Differential Equations}

\subsection{Limiting Periodic Measures}

Using the developed theory from Section 3, we apply the results specifically
in the context of $T$-periodic SDEs evolving on Euclidean state space.
In this subsection, we are particularly interested in results that
can be verified  to possess a limiting periodic measure. We will study
the $T$-periodic SDEs with white noises as its source of randomness.
We note however that Theorem \ref{thm:Limiting_PM_on_E} can accommodate
other types of noise. For instance, Höpfner and Löcherbach \cite{LocherachHopfnerTPeriodic}
studied at a periodically forced Ornstein-Uhlenbeck process under
the influence of Lévy noise. Generalising to Lévy noise would be an
area of future works and foresee applications to time-periodic financial
models with jumps. 

We fix some nomenclature and notation. Non-autonomous refer to SDE
coefficients which depend explicitly on time. We always denote by
$(\mathbb{R}^{d},\mathcal{B}(\mathbb{R}^{d}))$ the Euclidean space
where $\mathcal{B}(\mathbb{R}^{d})$ denote the standard Borel $\sigma$-algebra
on $\mathbb{R}^{d}$ and let $\langle\cdot,\cdot\rangle$ and $\lVert\cdot\rVert$
to denote the standard inner-product and norm on $\mathbb{R}^{d}$.
Then we can define $B_{r}(y):=\{x\in\mathbb{\mathbb{R}}^{d}|\left\lVert x-y\right\rVert <r\}$
for the open ball of radius $r>0$ centred at $y$. And denote for
convenience $B_{r}:=B_{r}(0)$. On $\mathbb{R}^{d}$, we re-use $\Lambda$
as the Lebesgue measure. We let $GL(\mathbb{R}^{d})$ denote the space
of invertible $d\times d$ matrices and let $L_{2}(\mathbb{R}^{d}):=\{\sigma\in\mathbb{R}^{d\times d}|\lVert\sigma\rVert_{2}<\infty\}$
where $\lVert\sigma\rVert_{2}=\sqrt{\text{Tr}(\sigma\sigma^{T})}=\sqrt{\sum_{i,j=1}^{d}\sigma_{ij}^{2}}$
as the standard Frobenius norm. 

We let $C^{1,2}(\mathbb{R}^{+}\times\mathbb{R}^{d})$ denote the space
of functions which are continuously differentiable in the first variable
and twice differentiable in the spatial variables and $C_{b}^{\infty}(B_{n})$
denote the space of bounded infinitely differentiable real-valued
functions on $B_{n}$. Functions $b:\mathbb{R}^{+}\times\mathbb{R}^{d}\rightarrow\mathbb{R}^{d}$
and $\sigma:\mathbb{R}^{+}\times\mathbb{R}^{d}\rightarrow\mathbb{R}^{d\times d}$
are said to be locally Lipschitz if for any compact set $K\subset\mathcal{B}(\mathbb{R}^{d})$
there exists a constants $L=L(K)$ and $M=M(K)$ such that $\left\lVert b(t,x)-b(t,y)\right\rVert \leq L\left\lVert x-y\right\rVert $
and $\left\lVert \sigma(t,x)-\sigma(t,y)\right\rVert _{2}\leq M\left\lVert x-y\right\rVert _{2}$
for $x,y\in K$. They are (globally) Lipschitz if $K=E$. We say that
$\sigma$ has linear growth if there exists a constant $C>0$ such
that

\begin{equation}
\lVert\sigma(t,x)\rVert_{2}^{2}\leq C(1+\lVert x\rVert^{2}),\quad t\in\mathbb{R}^{+},x\in\mathbb{R}^{d}.\label{eq:LinearGrowth_sigma}
\end{equation}
Define for ease, $\lVert\sigma\rVert_{\infty}:=\sup_{(t,x)\in\mathbb{R}^{+}\times\mathbb{R}^{d}}\lVert\sigma(t,x)\rVert_{2}$
and say $\sigma$ is bounded with bounded inverse if
\begin{equation}
\max\{\lVert\sigma\rVert_{\infty},\lVert\sigma^{-1}\rVert_{\infty}\}<\infty.\label{eq:Sigma_and_inverse_bounded}
\end{equation}
For tuple $\alpha=(\alpha_{0},\alpha_{1},...,\alpha_{d})\in\mathbb{N}^{d+1},$
define the partial derivatives $\partial^{\alpha}=\frac{\partial^{\lvert\alpha\rvert}}{\partial_{t}^{\alpha_{0}}\partial_{x_{1}}^{\alpha_{1}}\cdot\cdot\cdot\partial_{x_{d}}^{\alpha_{d}}}$
where $\lvert\alpha\rvert=\sum_{i=0}^{d}\alpha_{i}$. We say that
the functions $b:\mathbb{R}^{+}\times\mathbb{R}^{d}\rightarrow\mathbb{R}^{d}$
and $\sigma:\mathbb{R}^{d}\rightarrow\mathbb{R}^{d\times d}$ are
locally smooth and bounded if for all $n\in\mathbb{N}$
\begin{equation}
\sigma_{ij}\in C_{b}^{\infty}(B_{n}),\quad1\leq i,j\leq d,\label{eq:LocallySmoothSigma}
\end{equation}
and
\begin{equation}
b(t,x)+\partial^{\alpha}b(t,x)\quad\text{bounded on }\mathbb{R}^{+}\times B_{n},\alpha\in\mathbb{N}^{d+1},\lvert\alpha\rvert=d.\label{eq:LocallySmoothDrift}
\end{equation}
Note that (\ref{eq:LocallySmoothSigma}) and (\ref{eq:LocallySmoothDrift})
imply the respective functions are locally Lipschitz. Whenever we
assume (\ref{eq:LocallySmoothSigma}), we always demand that $\sigma$
is a function of spatial variables only. 

We study Markov processes $X_{t}=X_{t}^{s,x}$ satisfying $T$-periodic
SDEs of the form
\begin{equation}
\begin{cases}
dX_{t}=b(t,X_{t})dt+\sigma(t,X_{t})dW_{t},\\
X_{s}=x.
\end{cases}\label{eq:NonAutonSDE}
\end{equation}
Here $x\in\mathbb{R}^{d}$, $T>0$ and functions $b\in C(\mathbb{R}^{+}\times\mathbb{R}^{d},\mathbb{R}^{d})$
and $\sigma\in C(\mathbb{R}^{+}\times\mathbb{R}^{d},GL(\mathbb{R}^{d}))$
are both $T$-periodic i.e. 
\[
b(t,\cdot)=b(t+T,\cdot),\quad\text{and }\quad\sigma(t,\cdot)=\sigma(t+T,\cdot),
\]
and $W_{t}$ is a $d$-dimensional Brownian motion on the probability
space $(\Omega,\mathcal{F},\mathbb{P})$. The infinitesimal generator
of (\ref{eq:NonAutonSDE}), $\mathcal{L}(t)$ given by 
\begin{equation}
\mathcal{L}(t)f(t,x)=\partial_{t}f(t,x)+\sum_{i=1}^{d}b_{i}(t,x)\partial_{i}f(t,x)+\frac{1}{2}\sum_{i,j=1}^{d}\left(\sigma\sigma^{T}\right)_{ij}\partial_{ij}^{2}f(t,x),\quad f\in C^{1,2}(\mathbb{R}^{+}\times\mathbb{R}^{d}).\label{eq:Generator}
\end{equation}
We used the short hand notation $\mathbb{P}^{s,x}$ and $\mathbb{E}^{s,x}$
for the associated probability measure and expectation respectively
for the process starting at $(s,x)\in\mathbb{R}^{+}\times\mathbb{R}^{d}$.
When a unique solution exists, one can define the Markov transition
kernel 
\begin{equation}
P(s,t,x,\Gamma):=\mathbb{P}^{s,x}(X_{t}\in\Gamma),\quad s<t,\Gamma\in\mathcal{B}.\label{eq:DefineMarkovTransitionByUniqueSoln}
\end{equation}
A unique solution exists when the Markov process $X_{t}$ is regular
i.e. for any $(s,x)\in\mathbb{R}^{+}\times\mathbb{R}^{d}$,
\begin{equation}
\mathbb{P}^{s,x}\{\tau=\infty\}=1,\label{eq:regular}
\end{equation}
where 
\[
\tau:=\lim_{n\rightarrow\infty}\tau_{n},\quad\tau_{n}:=\inf_{t\geq s}\{\lVert X_{t}\rVert\geq n\},\quad n\in\mathbb{N}.
\]

Using Proposition \ref{prop:PointwiseNondegDensity} and Theorem \ref{thm:DensityContinuityforLCD},
we give sufficient conditions in which the $T$-periodic Markov kernel
$P(s,s+T,\cdot,\cdot)$ of an SDE satisfies the local Doeblin condition
(\ref{eq:LocalDoeblinOtherTimeValues}). Specifically, we sufficiently
show (local) irreducibility and existence of a jointly continuous
density with respect to the Lebesgue measure $\Lambda$ (a Haar measure
on the $\mathbb{R}^{d}$, see Remark \ref{rem:HaarMeasure}).  It
is possible to show both properties simultaneously. For instance,
heat kernel estimates such as the classical one by Aronson \cite{Aronson_FundamentalBound}
sufficiently implies\textbf{ }the Proposition \ref{prop:PointwiseNondegDensity}
for non-autonomous SDEs with bounded drift and non-degenerate bounded
diffusion. 

Relaxing the non-degeneracy and boundedness assumption of \cite{Aronson_FundamentalBound},
it is well-known that autonomous SDEs satisfying (\ref{eq:regular})
and Hörmander's condition possesses a smooth density (globally with
respect to $\Lambda$) for the Markov transition kernel \cite{Malliavin_Hypo,HormanderBooks,RogersWilliamsVol2}.
However it is generally insufficient to yield irreducibility i.e.
Hörmander's condition does not imply the process can reach any given
non-empty open set with positive probability. We refer readers to
Remark 2.2 of \cite{Hairer_Hormander} for a counterexample. This
suggests some degree of non-degeneracy is required to imply irreducibility.
We emphasise that in existing literature, Hörmander's condition is
often applied for autonomous SDEs with relatively few existing results
for the non-autonomous case. Observe also that Theorem \ref{thm:DensityContinuityforLCD}
requires density of the transition kernel to exist locally rather
than globally. Recent advances by Höpfner, Löcherbach and Thieullen
gave the existence of a smooth local density of non-autonomous SDEs
under a time-dependent Hörmander's condition in \cite{HopfnerLocherbachThieullenTimeDepLocalHormanders}. 

Since the intention of this paper is to introduce main ideas and approach
to deduce the existence and uniqueness of periodic measures, we shall
show (global) irreducibility under the assumption that the diffusion
matrix and its inverse are bounded and utilise the results of \cite{HopfnerLocherbachThieullenTimeDepLocalHormanders}
for a local density. It will be the subject of future works to generalise
the results in the direction of local time-dependent Hörmander's condition
and relaxing the non-degeneracy assumption to attain a local irreducibility. 

Consider the following associated control system to (\ref{eq:NonAutonSDE})

\begin{equation}
\begin{cases}
dZ_{t}=\varphi(t)dt+\sigma(t,Z_{t})dW_{t}, & t\geq s,\\
Z_{s}=x,
\end{cases}\label{eq:Zt_ControlSystem}
\end{equation}
for some bounded adapted process $\varphi:\mathbb{R}^{+}\rightarrow\mathbb{R}^{d}$.
Inspired by the irreducibility argument of \cite{DaPratoZabczyk_ErgodicityInfDimensions},
we have the following lemma: 
\begin{lem}
\label{lem:Xt_Zt_equivalent(DPZ)}Assume $b$ and $\sigma$ are locally
Lipschitz and moreover $\sigma$ satisfies (\ref{eq:LinearGrowth_sigma})
and (\ref{eq:Sigma_and_inverse_bounded}). Assume further that there
exists a norm-like function $V$ and constant $c>0$ such that
\begin{equation}
\mathcal{L}(t)V\leq cV.\label{eq:RegularityCondition}
\end{equation}
Let $X_{t}=X_{t}^{s,x}$ and $Z_{t}=Z_{t}^{s,x}$ satisfy (\ref{eq:NonAutonSDE})
and (\ref{eq:Zt_ControlSystem}) respectively. Then the laws of $X_{t}$
and $Z_{t}$ are equivalent.
\end{lem}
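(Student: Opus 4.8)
The plan is to obtain the law of $Z$ from the law of $X$ by an equivalent change of measure furnished by Girsanov's theorem, exploiting that (\ref{eq:NonAutonSDE}) and (\ref{eq:Zt_ControlSystem}) share the same diffusion coefficient and differ only in their drifts, and that $\sigma^{-1}$ is bounded so the drift can be absorbed into the noise. The only delicate point is that the drift difference $b(t,X_t)-\varphi(t)$ is merely locally bounded, so the Girsanov exponential is a priori only a local martingale; the Lyapunov hypothesis (\ref{eq:RegularityCondition}) will be used precisely to promote it to a true martingale on every finite horizon.

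First I would record non-explosion of both solutions. Local Lipschitzness gives a unique strong solution of each equation up to its explosion time. For $Z$, boundedness of $\varphi$ and the linear growth (\ref{eq:LinearGrowth_sigma}) of $\sigma$ yield, via It\^o's formula for $\|Z_{t\wedge\sigma_n}\|^2$ with $\sigma_n:=\inf\{t\ge s:\|Z_t\|\ge n\}$ and Gronwall's inequality, a bound $\mathbb{E}\|Z_{t\wedge\sigma_n}\|^2\le C(1+\|x\|^2)e^{C(t-s)}$, so that $\mathbb{P}(\sigma_n\le t)\le C(1+\|x\|^2)e^{C(t-s)}/n^2\to0$ and $Z$ is regular. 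For $X$, the inequality $\mathcal{L}(t)V\le cV$ with $V$ norm-like is the standard Hasminskii non-explosion test (\cite{Hasminskii}): applying It\^o to $e^{-c(t-s)}V(t,X_{t\wedge\tau_n})$ with $\tau_n:=\inf\{t\ge s:\|X_t\|\ge n\}$ gives $\mathbb{E}[V(t\wedge\tau_n,X_{t\wedge\tau_n})]\le e^{c(t-s)}V(s,x)$, and coercivity of $V$ then forces $\mathbb{P}(\tau_n\le t)\to0$, i.e. (\ref{eq:regular}) holds. In particular the laws of $X$ and $Z$ on $C([s,T'];\mathbb{R}^d)$ are well defined for every $T'>s$; fix such a $T'$.

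On $[s,T']$ put $\rho_t:=\sigma(t,X_t)^{-1}\big(b(t,X_t)-\varphi(t)\big)$ and
\[
\mathcal{E}_t:=\exp\!\left(-\int_s^t\langle\rho_r,dW_r\rangle-\tfrac12\int_s^t\|\rho_r\|^2\,dr\right).
\]
Since $\sigma^{-1}$ and $\varphi$ are bounded, $b$ is continuous and $X$ is a.s.\ continuous (hence bounded) on $[s,T']$, the path $r\mapsto\rho_r$ is a.s.\ bounded on $[s,T']$, so $\mathcal{E}$ is a well-defined positive continuous local martingale and therefore a supermartingale with $\mathbb{E}[\mathcal{E}_{T'}]\le1$. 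To show $\mathbb{E}[\mathcal{E}_{T'}]=1$ I would localise at $\tau_n$: on $[s,\tau_n\wedge T']$ the process $X$ stays in $\overline{B_n}$, on which $b$ is bounded, so $\rho$ is deterministically bounded there and Novikov's criterion makes the stopped exponential $\mathcal{E}^{\tau_n}$ a martingale; let $\mathbb{Q}^n$ be defined by $d\mathbb{Q}^n/d\mathbb{P}=\mathcal{E}_{\tau_n\wedge T'}$. By Girsanov, under $\mathbb{Q}^n$ the process $(X_{t\wedge\tau_n})_{t\in[s,T']}$ solves the control equation (\ref{eq:Zt_ControlSystem}) up to the exit time from $B_n$, and hence by pathwise uniqueness of (\ref{eq:Zt_ControlSystem}) it has the same law as $(Z_{t\wedge\sigma_n})_{t\in[s,T']}$ under $\mathbb{P}$; in particular $\mathbb{Q}^n(\tau_n\le T')=\mathbb{P}(\sigma_n\le T')$. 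Therefore
\[
\mathbb{E}[\mathcal{E}_{T'}]\ \ge\ \mathbb{E}\!\left[\mathcal{E}_{\tau_n\wedge T'}\mathbf{1}_{\{\tau_n>T'\}}\right]\ =\ 1-\mathbb{Q}^n(\tau_n\le T')\ =\ 1-\mathbb{P}(\sigma_n\le T'),
\]
and letting $n\to\infty$ (non-explosion of $Z$) gives $\mathbb{E}[\mathcal{E}_{T'}]\ge1$, so $\mathcal{E}$ is a genuine martingale on $[s,T']$. This step — upgrading the local martingale $\mathcal{E}$ to a true martingale — is the crux of the proof, and it is here that non-explosion of $X$ (from (\ref{eq:RegularityCondition})) and of $Z$ both enter.

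It remains to conclude. Define $\mathbb{Q}$ on $\mathcal{F}_{T'}$ by $d\mathbb{Q}/d\mathbb{P}=\mathcal{E}_{T'}$; since $\mathcal{E}_{T'}>0$ a.s., $\mathbb{Q}$ and $\mathbb{P}$ are equivalent. By Girsanov, $\tilde W_t:=W_t+\int_s^t\rho_r\,dr$ is a Brownian motion under $\mathbb{Q}$ and $X$ solves $dX_t=\varphi(t)\,dt+\sigma(t,X_t)\,d\tilde W_t$ with $X_s=x$, so by pathwise uniqueness for (\ref{eq:Zt_ControlSystem}) the law of $(X_t)_{t\in[s,T']}$ under $\mathbb{Q}$ equals the law of $(Z_t)_{t\in[s,T']}$ under $\mathbb{P}$. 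Hence for every Borel $A\subseteq C([s,T'];\mathbb{R}^d)$ one has $\mathbb{P}\big((Z_t)_{t\in[s,T']}\in A\big)=\mathbb{Q}\big((X_t)_{t\in[s,T']}\in A\big)$, which vanishes if and only if $\mathbb{P}\big((X_t)_{t\in[s,T']}\in A\big)=0$ because $\mathbb{Q}\sim\mathbb{P}$. Thus the path laws of $X$ and $Z$ on $[s,T']$ are equivalent, and passing to time-$t$ marginals ($s<t\le T'$, with $T'$ arbitrary) yields the claimed equivalence of the laws of $X_t$ and $Z_t$.
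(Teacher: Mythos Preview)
Your proof is correct and follows essentially the same Girsanov-plus-localisation strategy as the paper: build the stochastic exponential from the drift difference, localise at the exit times from balls so that Novikov applies, and use non-explosion of the \emph{target} equation to push $\mathbb{E}[\mathcal{E}_{T'}]$ up to $1$. The one substantive difference is the direction of the change of measure: the paper starts from $Z$ and defines $\alpha(r)=\sigma^{-1}(r,Z_r)[\varphi(r)-b(r,Z_r)]$, so that under the new measure $Z$ solves the $X$-equation (and non-explosion of $X$, via (\ref{eq:RegularityCondition}), supplies the limit $\mathbb{P}^n(\tau_n>t)\to1$); you instead start from $X$ and use $\rho_t=\sigma^{-1}(t,X_t)[b(t,X_t)-\varphi(t)]$, so that under $\mathbb{Q}$ the process $X$ solves the control system, and non-explosion of $Z$ supplies the limit. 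Your direction is marginally tidier: the upper bound $\mathbb{E}[\mathcal{E}_{T'}]\le1$ comes for free from the nonnegative-local-martingale/supermartingale property, whereas the paper obtains it through an a.s.\ subsequence argument and Fatou's lemma.
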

\begin{proof}
By Theorem 3.5 of \cite{Hasminskii}, locally Lipschitz coefficients
and (\ref{eq:RegularityCondition}) yields that $X_{t}$ exists and
is unique. Since $\varphi$ is a bounded adapted process and $\sigma$
is locally Lipschitz with linear growth, by Theorem 3.1 of \cite{Mao_SDEs},
$Z_{t}$ also exists and is unique. Set $\tau_{n}=\inf_{t\geq s}\{\lVert Z_{t}\rVert\geq n\}$,
$Z_{t}^{n}=Z_{t\wedge\tau_{n}}$ and 
\[
\mathbb{P}^{n}(d\omega)=\mathbb{P}(d\omega)M_{t}^{n},
\]
where 
\[
M_{t}^{n}=\exp\left(-\frac{1}{2}\int_{s}^{t\wedge\tau_{n}}\alpha^{2}(r)dr-\int_{s}^{t\wedge\tau_{n}}\alpha(r)dW_{r}\right),
\]
and $\alpha(r)=\sigma^{-1}(r,Z_{r})[\varphi(r)-b(r,Z_{r})]$. It is
clear that $\alpha(r)$ is bounded for $s\leq r\leq\tau_{n}$, hence
Novikov condition is satisfied. Then Girsanov theorem implies
\[
\widetilde{W}_{t}^{n}=W_{t}+\int_{s}^{t}\alpha(r)dr
\]
is a Brownian motion on $\mathbb{R}^{d}$ under the probability measure
$\mathbb{P}^{n}$. It is clear that $d\widetilde{W}_{r}^{n}=dW_{r}+\alpha(r)dr$
and $\varphi(t)=\sigma(t,Z_{t})\alpha(t)+b(t,Z_{t})$ so
\begin{align*}
Z_{t}^{n} & =x+\int_{s}^{t\wedge\tau_{n}}\varphi(r)dr+\int_{s}^{t\wedge\tau_{n}}\sigma(r,Z_{r}^{n})dW_{r}\\
 & =x+\int_{s}^{t\wedge\tau_{n}}\left[\sigma(r,Z_{r}^{n})\alpha(r)+b(r,Z_{r}^{n})\right]dr+\int_{s}^{t\wedge\tau_{n}}\sigma(r,Z_{r}^{n})\left[d\widetilde{W}_{r}^{n}-\alpha(r)dr\right]\\
 & =x+\int_{s}^{t\wedge\tau_{n}}b(r,Z_{r}^{n})dr+\int_{s}^{t\wedge\tau_{n}}\sigma(r,Z_{r}^{n})d\widetilde{W}_{r}^{n}.
\end{align*}
i.e. $Z_{t}^{n}$ is a solution of (\ref{eq:NonAutonSDE}) on $(\Omega,\mathcal{F},\mathbb{P}^{n})$.
As the law of the solution does not depend on the choice of probability
space, we have that
\[
\mathbb{P}(X_{t}^{n}\in\Gamma)=\mathbb{P}^{n}(Z_{t}^{n}\in\Gamma),\quad\Gamma\in\mathcal{B}(\mathbb{R}^{d}).
\]
As $\mathbb{P}$ and $\mathbb{P}^{n}$ are equivalent, the laws of
$X_{t}^{n}$ and $Z_{t}^{n}$ are equivalent. This implies that
\begin{align*}
\mathbb{P}^{n}(\tau_{n}>t) & =\mathbb{P}^{n}\left(\sup_{s\le r\leq t}\lVert Z_{r}\rVert\leq n\right)=\mathbb{P}\left(\sup_{s\le r\leq t}\lVert X_{r}\rVert\leq n\right)\rightarrow1\quad\text{as }n\rightarrow\infty.
\end{align*}
 Define
\[
M_{t}=\exp\left(-\frac{1}{2}\int_{s}^{t}\alpha^{2}(r)dr-\int_{s}^{t}\alpha(r)dW_{r}\right).
\]
Then
\begin{align*}
\mathbb{E}[M_{t}] & \geq\mathbb{E}[M_{t}^{n}I_{\{\tau_{n}>t\}}]=\mathbb{P}^{n}(\tau_{n}>t)\rightarrow1\quad\text{as }n\rightarrow\infty.
\end{align*}
Moreover, we can prove that $\mathbb{P}(\tau_{n}>t)\rightarrow1$
as $n\rightarrow\infty$. This suggests from Borel-Cantelli Lemma
that there is a subsequence $n_{k}$ such that $\tau_{n_{k}}\rightarrow\infty$
almost surely where $n_{k}\rightarrow\infty$ as $k\rightarrow\infty$.
Thus
\[
M_{t}^{n_{k}}\rightarrow M_{t},\quad\text{as }k\rightarrow\infty
\]
almost surely. Now, by Fatou's lemma
\[
\lim_{k\rightarrow\infty}\mathbb{E}\left[M_{t}^{n_{k}}\right]\geq\mathbb{E}\left[\lim_{k\rightarrow\infty}M_{t}^{n_{k}}\right]=\mathbb{E}[M_{t}],
\]
and $\mathbb{E}[M_{t}^{n_{k}}]=1$ for each $k$ since $M_{t}^{n_{k}}$
is a martingale. So $\mathbb{E}[M_{t}]\leq1$. Thus, we have that
$\mathbb{E}[M_{t}]=1$. Now we apply Girsanov theorem \cite{DaPratoZabcyzk92}
to yield that 
\[
\widetilde{W}_{t}=W_{t}+\int_{s}^{t}\alpha(r)dr
\]
is a Brownian motion on $\mathbb{R}^{d}$ under the probability measure
$\widetilde{\mathbb{P}}$ where $\widetilde{\mathbb{P}}(d\omega)=\mathbb{P}(d\omega)M_{t}$.
As before, 
\begin{align*}
Z_{t} & =x+\int_{s}^{t}\varphi(r)dr+\int_{s}^{t}\sigma(r,Z_{r})dW_{r}=x+\int_{s}^{t}b(r,Z_{r})dr+\int_{s}^{t}\sigma(r,Z_{r})d\widetilde{W}_{r}
\end{align*}
is a solution to (\ref{eq:NonAutonSDE}) on $(\Omega,\mathcal{F},\widetilde{\mathbb{P}})$.
As the law of the solution does not depend on the choice of probability
space, we have that
\[
\mathbb{P}(X_{t}\in\Gamma)=\widetilde{\mathbb{P}}(Z_{t}\in\Gamma),\quad\Gamma\in\mathcal{B}(\mathbb{R}^{d}).
\]
As $\mathbb{P}$ and $\widetilde{\mathbb{P}}$ are equivalent, the
laws of $X_{t}$ and $Z_{t}$ are equivalent.
\end{proof}
\begin{thm}
\label{thm:DPZIrred}Consider SDE (\ref{eq:NonAutonSDE}) and assume
the same conditions as Lemma \ref{lem:Xt_Zt_equivalent(DPZ)}. Then
the Markov transition kernel $P(s,t,\cdot,\cdot)$ for $s<t<\infty$
is irreducible i.e. $P(s,t,x,\Gamma)>0$ for all $x\in\mathbb{R}^{d}$
and non-empty open $\Gamma\in\mathcal{B}(\mathbb{R}^{d})$.
\end{thm}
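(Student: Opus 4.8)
The plan is to reduce irreducibility of the SDE \eqref{eq:NonAutonSDE} to irreducibility of the control system \eqref{eq:Zt_ControlSystem}, and then to establish the latter by an explicit steering argument. By Lemma \ref{lem:Xt_Zt_equivalent(DPZ)}, for fixed $s<t$ and $x$ the law of $X_t^{s,x}$ is equivalent (mutually absolutely continuous) to the law of $Z_t^{s,x}$ for any bounded adapted control $\varphi$. Hence $P(s,t,x,\Gamma) = \mathbb{P}(X_t^{s,x}\in\Gamma) > 0$ as soon as $\mathbb{P}(Z_t^{s,x}\in\Gamma) > 0$ for one admissible choice of $\varphi$. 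So it suffices to show: for every non-empty open $\Gamma$ and every target point $y$ in $\Gamma$, there is a bounded adapted (in fact deterministic) $\varphi$ for which $\mathbb{P}(Z_t^{s,x}\in\Gamma) > 0$.

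First I would exploit the fact that in \eqref{eq:Zt_ControlSystem} the diffusion coefficient $\sigma(t,Z_t)$ is unchanged but the drift is the free control $\varphi(t)$. Pick any smooth deterministic path $\xi:[s,t]\to\mathbb{R}^d$ with $\xi(s)=x$, $\xi(t)=y$, say the straight line or any $C^1$ curve, and set $\varphi(r):=\xi'(r)$; this is bounded on $[s,t]$. Writing $Z_r = \xi(r) + R_r$ where $R_r:=\int_s^r \sigma(u,Z_u)\,dW_u$, the process $R$ is a stochastic integral with $R_s=0$ and, because $\sigma$ is bounded below and above (condition \eqref{eq:Sigma_and_inverse_bounded}) and locally Lipschitz, $R$ is a genuine (non-degenerate) continuous martingale. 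The event $\{Z_t\in\Gamma\}$ contains the event that $\sup_{s\le r\le t}\|R_r\|$ is small enough that $\xi(r)+R_r$ stays in a tube around $\xi$ landing inside $\Gamma$. I would then invoke a support-type statement for $R$: the probability that a non-degenerate continuous semimartingale driven by Brownian motion stays within any fixed $\delta$-tube of the zero path on a compact time interval is strictly positive. This follows from the support theorem for diffusions (Stroock--Varadhan), or more elementarily from Girsanov plus the positivity of small-ball probabilities for Brownian motion, using that $\sigma^{-1}$ is bounded so one can change measure back to a standard Brownian motion.

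The main obstacle is making the "$R$ stays in a small tube with positive probability" step rigorous without circularity, since $R$ solves an SDE whose well-posedness and non-degeneracy we are using. I would handle this by one more Girsanov step internal to \eqref{eq:Zt_ControlSystem}: since $\sigma$ is bounded with bounded inverse, comparing $Z$ (with drift $\varphi=\xi'$) to the process with zero drift does not change the null sets, so it is enough to treat $d\widetilde Z_r = \sigma(r,\widetilde Z_r)\,dW_r$ and show $\mathbb{P}(\sup_{s\le r\le t}\|\widetilde Z_r - x\| < \delta) > 0$ for every $\delta>0$. For this one can use the time-change / Dambis--Dubins--Schwarz representation of each coordinate, or directly the Stroock--Varadhan support theorem which applies precisely because $\sigma\sigma^T$ is uniformly elliptic by \eqref{eq:Sigma_and_inverse_bounded}; the support theorem says the topological support of the law of $\widetilde Z$ on $C([s,t];\mathbb{R}^d)$ is the closure of the set of solutions of the associated ODEs with $L^2$ controls, which certainly includes paths staying near $x$. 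Combining: choose $\delta$ small relative to the tube around $\xi$ and the distance from $y$ to $\partial\Gamma$, conclude $\mathbb{P}(Z_t^{s,x}\in\Gamma)>0$, and finally transfer back to $X$ via Lemma \ref{lem:Xt_Zt_equivalent(DPZ)} to obtain $P(s,t,x,\Gamma)>0$.

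Two remarks on bookkeeping. The hypotheses of Lemma \ref{lem:Xt_Zt_equivalent(DPZ)}, namely local Lipschitz $b,\sigma$, linear growth \eqref{eq:LinearGrowth_sigma}, boundedness of $\sigma$ and $\sigma^{-1}$ \eqref{eq:Sigma_and_inverse_bounded}, and the Lyapunov bound \eqref{eq:RegularityCondition}, already guarantee both that $X$ is regular (so $P(s,t,x,\cdot)$ is well defined by \eqref{eq:DefineMarkovTransitionByUniqueSoln}) and that $Z$ exists and is unique, so no extra assumptions are needed. Also, the argument is purely "forward in time" and uses nothing about periodicity of $b,\sigma$, so the stated conclusion for all $s<t<\infty$ follows uniformly; periodicity will only be invoked later when specialising $t=s+T$ to feed \eqref{eq:LocalIrred} of Theorem \ref{thm:DensityContinuityforLCD}.
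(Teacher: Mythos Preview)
Your high-level plan matches the paper's: invoke Lemma~\ref{lem:Xt_Zt_equivalent(DPZ)} to transfer the question to the controlled process $Z$, then steer. But your ``one more Girsanov step'' contains a genuine slip. You want positivity of the event $\{\sup_r\|R_r\|<\delta\}=\{\sup_r\|Z_r-\xi(r)\|<\delta\}$. Changing measure to remove the drift $\varphi=\xi'$ makes $Z$ a zero-drift diffusion under the new measure, but equivalence of measures preserves the \emph{event}, not its description relative to a new reference path: you must show $\mathbb{P}\bigl(\sup_r\|\tilde Z_r-\xi(r)\|<\delta\bigr)>0$, not $\mathbb{P}\bigl(\sup_r\|\tilde Z_r-x\|<\delta\bigr)>0$. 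Staying near $x$ only lands $\tilde Z_t$ near $x$, which is useless unless $x\in\Gamma$. The fix is immediate once you invoke the support theorem with the right skeleton path (take control $u=\sigma^{-1}\xi'$ so the controlled ODE reproduces $\xi$), but as written the reduction is wrong. Note also that your suggested elementary substitutes for Stroock--Varadhan --- coordinatewise Dambis--Dubins--Schwarz, or small-ball estimates for Brownian motion --- do not transparently handle a $d$-dimensional martingale with state-dependent $\sigma$, so you are really leaning on the support theorem as a black box.

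The paper takes a more elementary route that avoids any support-type input. It runs $Z$ with \emph{zero} control on $[s,u]$ (so $Z=Y$ solves $dY=\sigma(r,Y)\,dW$ there), then on the short window $[u,t]$ applies an adapted control $f(r,Y_u)$ that deterministically carries $Y_u$ to a fixed target $\tilde a$ near $a$. The residual noise is $I_2=\int_u^t\sigma(r,Z_r)\,dW_r$, whose second moment is at most $\|\sigma\|_\infty^2(t-u)$ by It\^o isometry; choosing $u$ close to $t$ and applying Chebyshev gives $\mathbb{P}(\|I_2\|>\delta/3)\le 1/4$, while choosing $R$ large gives $\mathbb{P}(\|Y_u\|\le R)\ge 3/4$, and the two combine to yield $\mathbb{P}(\|Z_t-a\|\le\delta)\ge 1/2$. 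The trade-off: your (corrected) approach is shorter to state but imports Stroock--Varadhan; the paper's two-phase construction is longer but fully self-contained, using only Girsanov, It\^o isometry and Chebyshev.
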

\begin{proof}
By Lemma \ref{lem:Xt_Zt_equivalent(DPZ)}, as $\mathbb{P}$ and $\widetilde{\mathbb{P}}$
are equivalent,\textbf{ }it is sufficient to show that for any $\delta>0$
and $x,a\in\mathbb{R}^{d}$ that
\[
\mathbb{P}(\lVert Z_{t}^{s,x}-a\rVert<\delta)>0.
\]
We consider the auxiliary system
\begin{equation}
\begin{cases}
dY_{t}=\sigma(t,Y_{t})dW_{t},\\
Y_{s}=x.
\end{cases}\label{eq:AuxillarySystemY}
\end{equation}
Since $\sigma$ is Lipschitz, then (\ref{eq:AuxillarySystemY}) has
a unique solution $Y_{t}$ satisfying
\begin{equation}
Y_{t}=x+\int_{s}^{t}\sigma(r,Y_{r})dW_{r}.\label{eq:RepresentationofY}
\end{equation}
For $u\in[s,t)$, $R>0$ and $\widetilde{a}\in\mathbb{R}^{d}$ all
to be chosen later, pick a bounded function $f:[u,t]\times\mathbb{R}^{d}\rightarrow\mathbb{R}^{d}$
such that $f$ is Lipschitz and 
\[
f(r,y)=\begin{cases}
0 & \text{if }\lVert y\rVert>2R,\\
\frac{\widetilde{a}-y}{t-u} & \text{if }\lVert y\rVert\leq R.
\end{cases}
\]
Define the integral
\[
I_{1}(y)=y+\int_{u}^{t}f(r,y)dr,\quad y\in\mathbb{R}^{d}.
\]
Observe that if $\lVert y\rVert\leq R$,
\begin{align}
I_{1}(y) & =y+\frac{1}{t-u}\int_{u}^{t}(\widetilde{a}-y)dr=\widetilde{a}.\label{eq:SemigroupIdentity}
\end{align}
Set 
\[
\varphi(r)=\begin{cases}
0 & \text{if }r\in[s,u),\\
f(r,Y_{u}) & \text{if }r\in[u,t].
\end{cases}
\]
Then it is clear that $Z_{r}^{s,x}=Y_{r}$ for $r\in[s,u)$. Hence,
by sample-path continuity of $Y_{t}$, $Z_{t}$ can be represented
as an initial-valued SDE in terms of $Y_{u}$ namely 
\[
Z_{t}^{s,x}=Y_{u}+\int_{u}^{t}f(r,Y_{u})dr+\int_{u}^{t}\sigma(r,Z_{r})dW_{r}.
\]
Let $I_{1}=I_{1}(Y_{u})$ and $I_{2}=\int_{u}^{t}\sigma(r,Z_{r})dW_{r}.$
Then $Z_{t}^{s,x}=I_{1}+I_{2}$. Choose any fixed $\widetilde{a}\in\mathbb{R}^{d}$
such that
\[
\lVert a-\widetilde{a}\rVert\leq\frac{\delta}{3}.
\]
Suppose the events $\{I_{1}=\widetilde{a}\}$ and $\{\lVert I_{2}\rVert\leq\frac{\delta}{3}\}$
holds then 
\[
\lVert Z_{t}^{s,x}-a\rVert=\lVert\left(I_{1}-\widetilde{a}\right)+(I_{2}+\widetilde{a}-a)\rVert\leq\lVert I_{2}\rVert+\lVert\widetilde{a}-a\rVert\leq\frac{2}{3}\delta.
\]
Hence 
\begin{equation}
P(\lVert Z_{t}^{s,x}-a\rVert\leq\delta)\geq P(I_{1}=\widetilde{a}\text{ and }\lVert I_{2}\rVert\leq\frac{\delta}{3})\geq\mathbb{P}\left(I_{1}=\widetilde{a}\right)-\mathbb{P}\left(\lVert I_{2}\rVert>\frac{\delta}{3}\right),\label{eq:P(Zt_less_than_delta)}
\end{equation}
where we used the elementary inequality $\mathbb{P}(A\cap B)\geq\mathbb{P}(A)-\mathbb{P}(B^{c})$
for any event $A,B\in\mathcal{F}$. Thus the proof is complete provided
the right hand side of inequality (\ref{eq:P(Zt_less_than_delta)})
is positive. By Chebyshev\textquoteright s inequality and Itô's isometry,
\begin{align*}
\mathbb{P}\left(\lVert I_{2}\rVert>\frac{\delta}{3}\right) & \leq\frac{9}{\delta^{2}}\mathbb{E}[\lVert I_{2}\rVert^{2}]\leq\frac{9}{\delta^{2}}\int_{u}^{t}\lVert\sigma(r,Z_{r})\rVert^{2}dr\leq\frac{9}{\delta^{2}}(t-u)\lVert\sigma\rVert_{\infty}^{2}.
\end{align*}
Hence, one can fix a $u\in[s,t)$ such that 
\[
\mathbb{P}\left(\lVert I_{2}\rVert>\frac{\delta}{3}\right)\leq\frac{1}{4}.
\]
Similarly, for the fixed $u$ and any $R>0$, from (\ref{eq:RepresentationofY})
we have
\begin{align*}
\mathbb{P}(\lVert Y_{u}\rVert & >R)\leq\frac{1}{R^{2}}\mathbb{E}[\lVert Y_{u}\rVert^{2}]=\frac{1}{R^{2}}\left(\lVert x\rVert^{2}+\lVert\int_{u}^{s}\sigma(r,Y_{r})dr\rVert^{2}\right)\leq\frac{1}{R^{2}}\left[\lVert x\rVert^{2}+\lVert\sigma\rVert_{\infty}^{2}(u-s)^{2}\right].
\end{align*}
Hence one can fix a sufficiently large $R>0$ such that
\begin{equation}
\mathbb{P}(\lVert Y_{u}\rVert\leq R)\geq\frac{3}{4}.\label{eq:ProbYLessThanR}
\end{equation}
By (\ref{eq:SemigroupIdentity}), we have the inclusion $\{\lVert Y_{u}\rVert\leq R\}\subset\{I_{1}=\widetilde{a}\}$.
Hence by (\ref{eq:ProbYLessThanR})
\[
\mathbb{P}(I_{1}=\widetilde{a})\geq\mathbb{P}(\lVert Y_{u}\rVert\leq R)\geq\frac{3}{4}.
\]
The proof is complete by the following inequality for irreducibility
\[
\mathbb{P}(\lVert Z_{t}^{s,x}-a\rVert\leq\delta)=\mathbb{P}\left(I_{1}=\widetilde{a}\right)-\mathbb{P}\left(\lVert I_{2}\rVert>\frac{\delta}{3}\right)\geq\frac{3}{4}-\frac{1}{4}=\frac{1}{2}.
\]
\end{proof}
In the next theorem, we apply Theorem 1 of \cite{HopfnerLocherbachThieullenTimeDepLocalHormanders}
to attain a smooth density of transition probabilities in extension
of classical results by Aronson \cite{Aronson_FundamentalBound} for
parabolic equations with bounded time-dependent coefficients. We assume
that $\sigma$ is time-independent as in \cite{HopfnerLocherbachThieullenTimeDepLocalHormanders}.
It would be of future works to study the possible generalisation of
Theorem 1 of \cite{HopfnerLocherbachThieullenTimeDepLocalHormanders}
for $T$-periodic $\sigma$.
\begin{thm}
\label{thm:SDE_limiting_PM}Consider SDE (\ref{eq:NonAutonSDE}) and
assume the same conditions as Lemma \ref{lem:Xt_Zt_equivalent(DPZ)}.
Assume that (\ref{eq:LocallySmoothSigma}) and (\ref{eq:LocallySmoothDrift})
holds. Assume further that there exists a compact set $K\in\mathcal{B}(\mathbb{R}^{d})$
such that (\ref{eq:FosterLya1_s*}) and (\ref{eq:FosterLya2_s*})
hold. Then the results of Theorem \ref{thm:Limiting_PM_on_E} hold.
\end{thm}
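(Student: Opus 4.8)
The plan is to verify the three hypotheses of Theorem \ref{thm:Limiting_PM_on_E} for the SDE \eqref{eq:NonAutonSDE} under the stated assumptions, namely the local Doeblin condition \eqref{eq:LDC_s*} and the Foster--Lyapunov conditions \eqref{eq:FosterLya1_s*}--\eqref{eq:FosterLya2_s*} for the one-step kernel $P(s_*,s_*+T)$. The Foster--Lyapunov conditions are assumed outright in the statement, so the entire content of the proof is to establish the local Doeblin condition. For this I would invoke Theorem \ref{thm:DensityContinuityforLCD}, which reduces the local Doeblin condition to two ingredients: the existence of a jointly continuous local density of $P(s,t,x,\cdot)$ with respect to Lebesgue measure $\Lambda$ on compact sets, and the local irreducibility estimate \eqref{eq:LocalIrred}.

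For the density, I would apply Theorem 1 of \cite{HopfnerLocherbachThieullenTimeDepLocalHormanders}. The hypotheses \eqref{eq:LocallySmoothSigma} and \eqref{eq:LocallySmoothDrift}, which guarantee $\sigma_{ij}\in C_b^\infty(B_n)$ (with $\sigma$ spatial only and, via \eqref{eq:Sigma_and_inverse_bounded}, non-degenerate) and the required local smoothness and boundedness of $b$ and its derivatives up to order $d$ on $\mathbb{R}^+\times B_n$, are precisely tailored to match the time-dependent (local) Hörmander-type conditions in that reference; the non-degeneracy of $\sigma$ makes the Hörmander bracket condition trivially satisfied at order zero. This yields a smooth, hence jointly continuous, local density $p(s,t,x,y)$ on any compact set, for any $s<t$; in particular on $K\times K$ for the time pair $(s_*,s_*+T)$, and likewise for the intermediate pairs $(s_*,s_*+r)$ and $(s_*+r,s_*+T)$ with $0<r<T$.

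For the local irreducibility \eqref{eq:LocalIrred}, I would apply Theorem \ref{thm:DPZIrred}: its hypotheses are exactly those of Lemma \ref{lem:Xt_Zt_equivalent(DPZ)}, which are assumed here, so $P(s,t,x,\Gamma)>0$ for every $x\in\mathbb{R}^d$ and every non-empty open $\Gamma$. Applying this with the time pairs $(s_*,s_*+r)$ and $(s_*+r,s_*+T)$ for any choice of $r\in(0,T)$ gives $P(s_*,s_*+r,x,\Gamma_1)>0$ and $P(s_*+r,s_*+T,x,\Gamma_2)>0$ for all $x\in K$ and non-empty open $\Gamma_1,\Gamma_2\subset K$, which is \eqref{eq:LocalIrred}. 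Note the regularity condition \eqref{eq:RegularityCondition} needed for Lemma \ref{lem:Xt_Zt_equivalent(DPZ)} is implied by the assumed Foster--Lyapunov bound \eqref{eq:FosterLya1_s*}--\eqref{eq:FosterLya2_s*} together with $T$-periodicity, since a norm-like supersolution to the drift inequality provides the requisite $V$ with $\mathcal{L}(t)V\le cV$ (or one takes $V=U_{s_*}$ appropriately extended); I would spell out this implication briefly. With both ingredients in hand, Theorem \ref{thm:DensityContinuityforLCD} delivers \eqref{eq:LDC_s*}, and then Theorem \ref{thm:Limiting_PM_on_E} applies verbatim to give the unique $T$-periodic measure and all three convergences.

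The main obstacle is checking that the precise hypotheses of Theorem 1 of \cite{HopfnerLocherbachThieullenTimeDepLocalHormanders} are met by \eqref{eq:LocallySmoothSigma}--\eqref{eq:LocallySmoothDrift}: that reference works with a time-dependent local Hörmander bracket condition and specific local smoothness/boundedness requirements on the coefficients, and one must confirm the translation is faithful — in particular that only $|\alpha|=d$ derivative bounds on $b$ are what is used, that $\sigma$ being time-independent is essential there, and that the non-degeneracy from \eqref{eq:Sigma_and_inverse_bounded} suffices in place of a genuine bracket-generating condition. A secondary, more routine point is confirming that \eqref{eq:RegularityCondition} (hence global existence and uniqueness, hence a well-defined Markov kernel \eqref{eq:DefineMarkovTransitionByUniqueSoln}) follows from the assumed Lyapunov hypotheses; this is standard but should be stated. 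Everything else is a matter of quoting the earlier results with the right time-shifts.
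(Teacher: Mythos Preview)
Your proposal is correct and follows essentially the same route as the paper: invoke Theorem~1 of \cite{HopfnerLocherbachThieullenTimeDepLocalHormanders} (via \eqref{eq:LocallySmoothSigma}, \eqref{eq:LocallySmoothDrift} and the invertibility of $\sigma$) for a smooth local density, use Theorem~\ref{thm:DPZIrred} for irreducibility, feed both into Theorem~\ref{thm:DensityContinuityforLCD} to obtain \eqref{eq:LDC_s*}, and then apply Theorem~\ref{thm:Limiting_PM_on_E}. One small simplification: the regularity condition \eqref{eq:RegularityCondition} is already part of the hypotheses of Lemma~\ref{lem:Xt_Zt_equivalent(DPZ)}, which the theorem explicitly assumes, so you need not derive it from the Foster--Lyapunov conditions.
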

\begin{proof}
The invertibility of $\sigma$ implies linear independent columns
hence our collective assumptions satisfy Theorem 1 of \cite{HopfnerLocherbachThieullenTimeDepLocalHormanders}.
Hence there exists a smooth density $p(s,t,x,y)$ with respect to
$\Lambda$. Then using Theorem \ref{thm:DPZIrred}, we have that Theorem
\ref{thm:DensityContinuityforLCD} holds. Hence the assumptions of
Theorem \ref{thm:Limiting_PM_on_E} are satisfied. 
\end{proof}

\subsection{Geometric Ergodicity of Periodic Measures}

In the previous section, we studied limiting periodic measures in
a qualitative manner. We extend this for geometrically ergodic periodic
measures. That is, the convergence towards the periodic measure is
exponentially fast. We recall the geometric drift condition for SDE.
\begin{defn}
The SDE (\ref{eq:NonAutonSDE}) is said to satisfy the geometric drift
condition if there exists a function $V\in C^{1,2}(\mathbb{R}^{+}\times\mathbb{R}^{d},\mathbb{R}^{+})$
and constants $C\geq0$ and $\lambda>0$ such that 
\begin{equation}
\mathcal{L}(t)V\leq C-\lambda V\quad\text{on }\mathbb{R}^{+}\times\mathbb{R}^{d},\label{eq:GeometricDriftCdn}
\end{equation}
where $\mathcal{L}(t)$ is given by (\ref{eq:Generator}). 
\end{defn}
Note that if (\ref{eq:GeometricDriftCdn}) is satisfied then the SDE
is regular. Specifically, since $V\geq0$ and $\mathcal{L}(t)[\text{const}]=0$,
it is easy to see that
\[
\mathcal{L}(t)(V+1)\leq C-\lambda V\leq C\leq C(V+1),
\]
hence the regularity condition (\ref{eq:RegularityCondition}) is
satisfied. 

Using the geometric drift condition, we give one of the main results
on the existence, uniqueness and geometric ergodicity of a periodic
measure. It is worth noting that if the SDE coefficients have a trivial
period, then the theorem recovers known results of invariant measures.
Hence, the results here presented can be regarded as time-periodic
generalisations of such theorems of invariant measures for autonomous
SDEs. 
\begin{thm}
\label{thm:GeometricConvergence-PeriodicMeasure}Assume $T$-periodic
SDE (\ref{eq:NonAutonSDE}) coefficients satisfies (\ref{eq:LinearGrowth_sigma}),
(\ref{eq:Sigma_and_inverse_bounded}), (\ref{eq:LocallySmoothSigma})
and (\ref{eq:LocallySmoothDrift}). Assume further that there exists
a $T$-periodic norm-like $V\in C^{1,2}(\mathbb{R}^{+}\times\mathbb{R}^{d},\mathbb{R}^{+})$
satisfying the geometric drift condition (\ref{eq:GeometricDriftCdn}).
Then Theorem \ref{thm:Geometric_PM_on_E} follows.
\end{thm}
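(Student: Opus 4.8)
The plan is to check that the hypotheses of Theorem \ref{thm:Geometric_PM_on_E} hold at some fixed initial time $s_{*}\in\mathbb{T}$, which amounts to two things: the local Doeblin condition (\ref{eq:LDC_s*}) for the one-step kernel $P(s_{*},s_{*}+T)$, and the discrete-time geometric drift inequality $P(s_{*},s_{*}+T)U_{s_{*}}\leq\alpha U_{s_{*}}+\beta$. Everything else in the conclusion is then supplied by Theorem \ref{thm:Geometric_PM_on_E} directly.

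First I would observe that, as already remarked, (\ref{eq:GeometricDriftCdn}) forces the regularity condition (\ref{eq:RegularityCondition}) (apply $\mathcal{L}(t)$ to $V+1$), so (\ref{eq:NonAutonSDE}) has a unique non-exploding solution and (\ref{eq:DefineMarkovTransitionByUniqueSoln}) defines a genuine $T$-periodic Markov transition kernel. Since (\ref{eq:LocallySmoothSigma}) and (\ref{eq:LocallySmoothDrift}) imply local Lipschitzness, all hypotheses of Lemma \ref{lem:Xt_Zt_equivalent(DPZ)} and Theorem \ref{thm:DPZIrred} are met, giving (global) irreducibility of $P(s,t,\cdot,\cdot)$ for $s<t$, hence a fortiori the local irreducibility (\ref{eq:LocalIrred}). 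Together with the invertibility of $\sigma$, assumptions (\ref{eq:LocallySmoothSigma})--(\ref{eq:LocallySmoothDrift}) put us in the framework of Theorem~1 of \cite{HopfnerLocherbachThieullenTimeDepLocalHormanders}, producing a smooth --- in particular jointly continuous --- density $p(s,t,x,y)$ with respect to the Lebesgue measure $\Lambda$. Exactly as in the proof of Theorem \ref{thm:SDE_limiting_PM}, Theorem \ref{thm:DensityContinuityforLCD} (with $\Lambda$ a Haar measure on $\mathbb{R}^{d}$, cf.\ Remark \ref{rem:HaarMeasure}) then yields (\ref{eq:LDC_s*}).

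The genuinely new step is the discrete geometric drift, which I would extract from (\ref{eq:GeometricDriftCdn}) by a Gr\"onwall argument. Fix $s_{*}$ and put $U_{s_{*}}:=V(s_{*},\cdot)$, which is norm-like because $V$ is. Applying It\^o's formula to $t\mapsto e^{\lambda t}V(t,X_{t})$ along the localizing stopping times $\tau_{n}$ of (\ref{eq:regular}), the generator of this function applied along the solution is bounded above by $Ce^{\lambda t}$ thanks to (\ref{eq:GeometricDriftCdn}); taking expectations, the stochastic integral on $[s_{*},s_{*}+T]\wedge\tau_{n}$ is a true martingale, and letting $n\to\infty$ (using $V\geq0$ and Fatou to pass the limit inside) gives
\[
\mathbb{E}^{s_{*},x}\bigl[V(t,X_{t})\bigr]\leq e^{-\lambda(t-s_{*})}V(s_{*},x)+\frac{C}{\lambda}\bigl(1-e^{-\lambda(t-s_{*})}\bigr),\qquad t\geq s_{*}.
\]
Evaluating at $t=s_{*}+T$ and using the $T$-periodicity $V(s_{*}+T,\cdot)=V(s_{*},\cdot)=U_{s_{*}}$,
\[
P(s_{*},s_{*}+T)U_{s_{*}}(x)=\mathbb{E}^{s_{*},x}\bigl[V(s_{*}+T,X_{s_{*}+T})\bigr]\leq\alpha U_{s_{*}}(x)+\beta,\qquad x\in\mathbb{R}^{d},
\]
with $\alpha:=e^{-\lambda T}\in(0,1)$ and $\beta:=C/\lambda>0$. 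This is precisely the drift hypothesis of Theorem \ref{thm:Geometric_PM_on_E}, which now applies to give the unique geometric periodic measure $\rho$ and convergences (i)--(iv).

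The hard part will be the rigorous bookkeeping in the It\^o/Gr\"onwall step: because $V$ carries explicit time dependence and the solution is only a priori non-exploding, one must localize carefully, check that $\mathbb{E}^{s_{*},x}[e^{\lambda(t\wedge\tau_{n})}V(t\wedge\tau_{n},X_{t\wedge\tau_{n}})]$ stays finite and that the drift bound survives the stopping, and only then pass to the limit $n\to\infty$. None of this is deep --- it is the standard stochastic-Lyapunov localization --- but it is the one place where all of (\ref{eq:LinearGrowth_sigma})--(\ref{eq:LocallySmoothDrift}) and the regularity forced by (\ref{eq:GeometricDriftCdn}) are really used. The remaining inputs (irreducibility, the local density, and the reduction to Theorem \ref{thm:Geometric_PM_on_E}) are essentially already assembled in Theorems \ref{thm:DPZIrred} and \ref{thm:SDE_limiting_PM}.
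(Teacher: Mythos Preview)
Your proposal is correct and follows essentially the same route as the paper: apply It\^o's formula to $e^{\lambda t}V(t,X_{t})$ to turn (\ref{eq:GeometricDriftCdn}) into the discrete drift bound $P(s_{*},s_{*}+T)U_{s_{*}}\leq e^{-\lambda T}U_{s_{*}}+\tfrac{C}{\lambda}(1-e^{-\lambda T})$ via the $T$-periodicity of $V$, and combine irreducibility (Theorem \ref{thm:DPZIrred}) with the smooth local density from \cite{HopfnerLocherbachThieullenTimeDepLocalHormanders} to feed Theorem \ref{thm:DensityContinuityforLCD} and obtain (\ref{eq:LDC_s*}), then invoke Theorem \ref{thm:Geometric_PM_on_E}. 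The only cosmetic difference is that the paper first derives the drift inequality and uses it to exhibit an explicit compact $K$ (as a sublevel set of $U_{s}$) before checking the local Doeblin condition on that $K$, whereas you invoke the Doeblin step first; since Theorem \ref{thm:DensityContinuityforLCD} works for any non-empty compact $K$, this reordering is harmless.
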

\begin{proof}
By Itô's formula and the regularity of $V$, one has
\[
d\left(e^{\lambda t}V(t,X_{t})\right)=e^{\lambda t}(\lambda V+\mathcal{L}(t)V)dt+e^{\lambda t}\langle\nabla V,\sigma dW_{t}\rangle,
\]
hence by the geometric drift condition
\begin{align}
V(t,X_{t}) & =e^{-\lambda(t-s)}V(s,X_{s})+\int_{s}^{t}e^{-\lambda(t-r)}(\lambda V+\mathcal{L}(r)V)dr+\int_{s}^{t}e^{-\lambda(t-r)}\langle\nabla V,\sigma dW_{r}\rangle\nonumber \\
 & \leq e^{-\lambda(t-s)}V(s,X_{s})+\frac{C}{\lambda}\left(1-e^{-\lambda(t-s)}\right)+\int_{s}^{t}e^{-\lambda(t-r)}\langle\nabla V,\sigma dW_{r}\rangle.\label{eq:d(expV)}
\end{align}
By (\ref{eq:regular}) and the regularity of $V$, $\int_{s}^{t}e^{\lambda(t-r)}\langle\sigma^{T}(r,X_{r})\nabla V(r,X_{r}),dW_{r}\rangle_{\mathbb{R}^{d}}$
is a martingale. Hence 
\begin{equation}
\mathbb{E}^{s,x}[V(t,X_{t})]\leq e^{-\lambda(t-s)}V(s,x)+\frac{C}{\lambda}(1-e^{-\lambda(t-s)})\quad s\leq t.\label{eq:ExpectedVAnyTimes}
\end{equation}
Specifically,
\begin{equation}
\mathbb{E}^{s,x}[V(s+T,X_{s+T})]\leq e^{-\lambda T}V(s,x)+\frac{C}{\lambda}(1-e^{-\lambda T}).\label{eq:ExpectedV}
\end{equation}
Define the functions $U_{s}(\cdot):=V(s,\cdot)\geq0$. Since $V$
is $T$-periodic, we have that (\ref{eq:ExpectedV}) is equivalent
to 
\begin{align}
P(s,s+T)U_{s}(x) & \leq e^{-\lambda T}U_{s}(x)+\frac{C}{\lambda}(1-e^{-\lambda T})\label{eq:ExpDecay-OneStep}
\end{align}
That is to say (\ref{eq:MarkovChainGeometricDriftCondition}) is satisfied
for each $s\geq0$. Subtracting $U_{s}(x)$ from (\ref{eq:ExpDecay-OneStep})
yields
\begin{align*}
P(s,s+T)U_{s}(x)-U_{s}(x) & \leq(1-e^{-\lambda T})\left(\frac{C}{\lambda}-U_{s}(x)\right).
\end{align*}
Since $U_{s}$ is norm-like assumption, define for $\epsilon>0$
\[
K=\bigcap_{s\in[0,T]}K_{s},\quad\text{where }K_{s}:=\left\{ x\in\mathbb{R}^{d}|U_{s}(x)\leq\frac{C}{\lambda}+\frac{\epsilon}{1-e^{-\lambda T}}\right\} .
\]
For sufficiently large $\epsilon$, $K$ is non-empty compact set.
Since the SDE is regular, the same proof from Theorem \ref{thm:SDE_limiting_PM}
implies that Theorem \ref{thm:DensityContinuityforLCD} holds i.e.
$P(s,s+T,x,\cdot)$ satisfies the local Doeblin condition for each
$s\geq0$. Thus the conditions of Theorem \ref{thm:Geometric_PM_on_E}
are met.
\end{proof}
Theorem \ref{thm:GeometricConvergence-PeriodicMeasure} depends crucially
on finding a suitable Foster-Lyapunov function $V$. Dissipative SDEs
are special cases where the Euclidean norm is a such Foster-Lyapunov
function. This has the advantage that it can be simpler to verify
that the geometric drift condition is satisfied. The definition of
dissipativity below coincides with that of Hale \cite{HaleDissip}
when the SDE is deterministic ($\sigma=0$). 
\begin{defn}
SDE (\ref{eq:NonAutonSDE}) is weakly dissipative if there exists
constants $c,\lambda>0$ such that 
\begin{equation}
2\langle b(t,x),x\rangle\leq c-\lambda\lVert x\rVert^{2}\quad\text{on }\mathbb{R}^{+}\times\mathbb{R}^{d},\label{eq:Dissip}
\end{equation}
and dissipative if $c=0$.
\end{defn}
\begin{cor}
\label{Cor:Dissip} 
\label{zhao2019.4a}
Assume $T$-periodic SDE (\ref{eq:NonAutonSDE})
coefficients satisfies (\ref{eq:LinearGrowth_sigma}), (\ref{eq:Sigma_and_inverse_bounded}),
(\ref{eq:LocallySmoothSigma}) and (\ref{eq:LocallySmoothDrift})
and is weakly dissipative. Then Theorem \ref{thm:Geometric_PM_on_E}
holds.
\end{cor}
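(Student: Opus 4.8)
The plan is to show that weak dissipativity produces, essentially for free, a norm-like function in $C^{1,2}$ satisfying the geometric drift condition~\eqref{eq:GeometricDriftCdn}, so that the corollary reduces immediately to Theorem~\ref{thm:GeometricConvergence-PeriodicMeasure}. The natural candidate is $V(t,x) = 1 + \lVert x\rVert^2$ (the ``$+1$'' guarantees $V \geq 1 > 0$ and $V$ is norm-like and manifestly $T$-periodic since it is time-independent, and it lies in $C^{1,2}(\mathbb{R}^+\times\mathbb{R}^d)$). The only real computation is to apply the generator~\eqref{eq:Generator} to this $V$ and check that~\eqref{eq:Dissip} together with the linear growth~\eqref{eq:LinearGrowth_sigma} of $\sigma$ forces $\mathcal{L}(t)V \leq C - \lambda' V$ for suitable constants.

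Carrying this out: since $V$ has no $t$-dependence, $\partial_t V = 0$; one has $\nabla V = 2x$ and $\partial_{ij}^2 V = 2\delta_{ij}$, so
\begin{align*}
\mathcal{L}(t)V(t,x) & = \langle b(t,x), 2x\rangle + \tfrac{1}{2}\sum_{i=1}^d (\sigma\sigma^T)_{ii}\cdot 2 \\
 & = 2\langle b(t,x),x\rangle + \mathrm{Tr}(\sigma\sigma^T)(t,x) \\
 & = 2\langle b(t,x),x\rangle + \lVert\sigma(t,x)\rVert_2^2.
\end{align*}
Now invoke weak dissipativity~\eqref{eq:Dissip} to bound the first term by $c - \lambda\lVert x\rVert^2$, and linear growth~\eqref{eq:LinearGrowth_sigma} to bound the second by $C(1+\lVert x\rVert^2)$. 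This gives
\[
\mathcal{L}(t)V(t,x) \leq c + C - (\lambda - C)\lVert x\rVert^2.
\]
If $\lambda > C$ we are done directly: writing $\lambda' := \min\{\lambda - C, \text{something}\} > 0$, one rearranges into the form $\mathcal{L}(t)V \leq \widetilde{C} - \lambda' V$ using $V = 1 + \lVert x\rVert^2$. If instead the constant $C$ from linear growth is too large relative to $\lambda$, one rescales: replace $V$ by $V_a(x) = 1 + a\lVert x\rVert^2$ for small $a > 0$, which changes the generator computation to $\mathcal{L}(t)V_a = 2a\langle b,x\rangle + a\lVert\sigma\rVert_2^2 \leq a(c+C) - a(\lambda - C)\lVert x\rVert^2$ — this does not help with the sign. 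The cleaner fix, and the one I would actually use, is to exploit~\eqref{eq:Sigma_and_inverse_bounded}: $\sigma$ is \emph{bounded}, so $\lVert\sigma(t,x)\rVert_2^2 \leq \lVert\sigma\rVert_\infty^2$ uniformly, and then $\mathcal{L}(t)V \leq c + \lVert\sigma\rVert_\infty^2 - \lambda\lVert x\rVert^2 = (c + \lVert\sigma\rVert_\infty^2 + \lambda) - \lambda V$, which is~\eqref{eq:GeometricDriftCdn} with $\widetilde{C} = c + \lVert\sigma\rVert_\infty^2 + \lambda$ and the same $\lambda$.

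With the geometric drift condition verified for $V(t,x) = 1 + \lVert x\rVert^2$, all hypotheses of Theorem~\ref{thm:GeometricConvergence-PeriodicMeasure} are in place: the SDE coefficient assumptions~\eqref{eq:LinearGrowth_sigma}, \eqref{eq:Sigma_and_inverse_bounded}, \eqref{eq:LocallySmoothSigma}, \eqref{eq:LocallySmoothDrift} are assumed outright in the corollary, and $V$ is $T$-periodic, norm-like, and in $C^{1,2}$. Hence Theorem~\ref{thm:Geometric_PM_on_E} follows, which is exactly the claim. There is no genuine obstacle here — the content is entirely in the one-line generator computation and in noticing that boundedness of $\sigma$ (already assumed) makes the diffusion contribution a harmless additive constant, so the dissipativity constant $\lambda$ passes through unchanged as the exponential rate.
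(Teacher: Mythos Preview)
Your proof is correct and follows essentially the same route as the paper: take $V(t,x)=\lVert x\rVert^2$ (the paper omits your ``$+1$'', which is immaterial), compute $\mathcal{L}(t)V = 2\langle b,x\rangle + \lVert\sigma\rVert_2^2$, and use weak dissipativity together with boundedness of $\sigma$ to obtain the geometric drift condition with the same $\lambda$, then invoke Theorem~\ref{thm:GeometricConvergence-PeriodicMeasure}. Your detour through the linear-growth bound is unnecessary since $\lVert\sigma\rVert_\infty<\infty$ is already assumed, but you correctly identify this and land on the same one-line argument as the paper.
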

\begin{proof}
By Theorem \ref{thm:GeometricConvergence-PeriodicMeasure}, it suffices
to show $V(t,x)=\lVert x\rVert^{2}$ satisfies the geometric drift
condition. We compute that 
\begin{align*}
\mathcal{L}(t) & \lVert x\rVert^{2}=\langle b(t,x),x\rangle+\sum_{i,j}^{d}a_{ii}(t,x)\leq c-\lambda\lVert x\rVert^{2}+\lVert\sigma\rVert_{\infty}.
\end{align*}
i.e. $\lVert x\rVert^{2}$ satisfies the geometric drift condition
with $C=c+\lVert\sigma\rVert_{\infty}$ and same $\lambda$ from (\ref{eq:Dissip}). 
\end{proof}
\begin{thm}
\label{thm:OddPolyDrift}Consider $T$-periodic SDE (\ref{eq:NonAutonSDE})
with $\sigma$ satisfying (\ref{eq:LinearGrowth_sigma}), (\ref{eq:Sigma_and_inverse_bounded})
and (\ref{eq:LocallySmoothSigma}) and drift 
\[
b(t,x)=\left(\begin{array}{c}
\sum_{k=0}^{2p_{1}-1}S_{k}^{1}(t)x_{1}^{k}\\
\vdots\\
\sum_{k=0}^{2p_{d}-1}S_{k}^{d}(t)x_{d}^{k}
\end{array}\right),
\]
where $\{p_{i}\}_{i=1}^{d}\in\mathbb{N}\backslash\{0\}$, $\{S_{k}^{i}\}_{i=1...d}^{k=1...2p_{i}-2}$
are continuously differentiable $T$-periodic functions and constants
$S_{2p_{i}-1}^{i}<0$ . Then Theorem \ref{thm:GeometricConvergence-PeriodicMeasure}
holds.
\end{thm}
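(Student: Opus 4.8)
The plan is to deduce the statement from Corollary~\ref{Cor:Dissip} by checking that the SDE is weakly dissipative and that its coefficients satisfy the standing regularity hypotheses. The diffusion conditions (\ref{eq:LinearGrowth_sigma}), (\ref{eq:Sigma_and_inverse_bounded}) and (\ref{eq:LocallySmoothSigma}) are assumed, so the only structural hypothesis left to verify is (\ref{eq:LocallySmoothDrift}) for the drift. This is immediate: each component $b_i(t,x)=\sum_{k=0}^{2p_i-1}S_k^i(t)x_i^k$ is a polynomial in $x_i$ whose coefficients are either continuously differentiable $T$-periodic functions of $t$ or constants, so every partial derivative $\partial^\alpha b$ with $\lvert\alpha\rvert=d$ is again such a polynomial (in particular a $C^\infty$ function of the spatial variables), and on the bounded cylinder $\mathbb{R}^{+}\times B_n$ it is bounded, since a continuous $T$-periodic function of $t$ is bounded and $B_n$ is bounded. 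Hence (\ref{eq:LocallySmoothDrift}) holds.

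Next I would verify weak dissipativity (\ref{eq:Dissip}) with the candidate Lyapunov function $V(t,x)=\lVert x\rVert^{2}$. Because the drift is diagonal, $2\langle b(t,x),x\rangle=2\sum_{i=1}^{d}\sum_{k=0}^{2p_i-1}S_k^i(t)x_i^{k+1}$ splits over coordinates, and for each $i$ the highest-order term is $2S_{2p_i-1}^i x_i^{2p_i}$, which has \emph{even} degree $2p_i\ge 2$ and \emph{negative constant} coefficient. Consequently, for any fixed $\lambda>0$, the one-variable polynomial $g_i(t,x_i):=2\sum_{k=0}^{2p_i-1}S_k^i(t)x_i^{k+1}+\lambda x_i^{2}$ tends to $-\infty$ as $\lvert x_i\rvert\to\infty$, uniformly in $t$: by $T$-periodicity it is enough to let $t$ range over the compact interval $[0,T]$, on which the coefficients $S_k^i$ are bounded and continuous, so that the negative leading term dominates all the lower-order terms uniformly. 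Therefore $\sup_{t\ge 0,\;x_i\in\mathbb{R}}g_i(t,x_i)=:c_i<\infty$, and summing over $i$ gives $2\langle b(t,x),x\rangle\le c-\lambda\lVert x\rVert^{2}$ on $\mathbb{R}^{+}\times\mathbb{R}^{d}$ with $c=\sum_{i=1}^{d}c_i$, which is exactly (\ref{eq:Dissip}).

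With (\ref{eq:LinearGrowth_sigma}), (\ref{eq:Sigma_and_inverse_bounded}), (\ref{eq:LocallySmoothSigma}), (\ref{eq:LocallySmoothDrift}) and weak dissipativity all in hand, Corollary~\ref{Cor:Dissip} applies and yields the conclusion; equivalently, following the computation in its proof, $V(t,x)=\lVert x\rVert^{2}$ satisfies the geometric drift condition (\ref{eq:GeometricDriftCdn}) because $\mathcal{L}(t)\lVert x\rVert^{2}=2\langle b(t,x),x\rangle+\mathrm{Tr}(\sigma\sigma^{T})\le c+\lVert\sigma\rVert_\infty^{2}-\lambda\lVert x\rVert^{2}$, and $V$ is a $T$-periodic (indeed time-independent), norm-like function in $C^{1,2}(\mathbb{R}^{+}\times\mathbb{R}^{d},\mathbb{R}^{+})$, so Theorem~\ref{thm:GeometricConvergence-PeriodicMeasure} applies directly. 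I do not expect a genuine obstacle here; the one point that needs care is the \emph{uniformity in time} of the domination of the even-degree leading term over the lower-order terms, which is exactly where $T$-periodicity is used to reduce $t$ to a compact set. One should also check the boundary bookkeeping of the indices, e.g. the case $p_i=1$ where $b_i$ is affine in $x_i$, but the same argument covers it since $2p_i=2$ is still even.
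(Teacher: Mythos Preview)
Your approach is essentially the paper's: verify (\ref{eq:LocallySmoothDrift}) and weak dissipativity, then invoke Corollary~\ref{Cor:Dissip}. The paper's proof does exactly this, with a slightly different bookkeeping for the one-variable polynomial estimate (it first bounds $\sum_k S_k^i x_i^{k+1}+\lambda x_i^{2p_i}$ and then separately bounds $\lambda(x_i^{2}-x_i^{2p_i})$, whereas you combine these into a single step).

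There is one small slip. Your claim that $g_i(t,x_i)=2\sum_{k=0}^{2p_i-1}S_k^i(t)x_i^{k+1}+\lambda x_i^{2}\to-\infty$ holds ``for any fixed $\lambda>0$'' is false precisely in the case $p_i=1$ that you flag at the end: there $g_i(t,x_i)=(2S_1^i+\lambda)x_i^{2}+2S_0^i(t)x_i$, whose leading coefficient is negative only when $\lambda<-2S_1^i$. So ``$2p_i=2$ is still even'' is not the relevant point; what matters is that the added $\lambda x_i^{2}$ now competes with the leading term. The fix is the one the paper uses: restrict $\lambda$ to a suitable interval, e.g.\ $\lambda\in\bigl(0,-2\min_i S_{2p_i-1}^i\bigr)$ (the paper takes $\lambda\in(0,-\min_i S_{2p_i-1}^i)$ after absorbing the factor $2$). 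With that adjustment your argument goes through unchanged.
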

\begin{proof}
Clearly $b$ satisfies (\ref{eq:LocallySmoothDrift}). Hence by Corollary
\ref{Cor:Dissip}, it suffices to show that the SDE is weakly dissipative.
We compute that 
\[
\langle b(t,x),x\rangle=\sum_{i=1}^{d}\sum_{k=1}^{2p_{i}}S_{k}^{i}x_{i}^{k}.
\]
For each fixed $1\leq i\leq d$, $\sum_{k=1}^{2p_{i}}S_{k}^{i}x_{i}^{k}$
is an even degree polynomial with leading negative coefficient. By
assumption, $\{S_{k}^{i}\}$ are all bounded hence, fixing a $\lambda\in(0,-\min_{i}S_{2p_{i}-1}^{i})$,
define the constants
\[
\widetilde{c}_{i}:=\sup_{x_{i}\in\mathbb{\mathbb{R}},t\in[0,T]}\left(\sum_{k=1}^{2p_{i}}S_{k}^{i}x_{i}^{k}+\lambda x_{i}^{2p_{i}}\right)<\infty,\quad c_{i}:=\widetilde{c}_{i}+\sup_{x_{i}\in\mathbb{R}}\lambda\left(x_{i}^{2}-x_{i}^{2p_{i}}\right)<\infty,
\]
then we deduce the SDE is weakly dissipative by 
\begin{align*}
\langle b(t,x),x\rangle & \leq\sum_{i=1}^{d}\left(\widetilde{c}_{i}-\lambda x_{i}^{2p_{i}}\right)\leq\sum_{i=1}^{d}\left(c_{i}-\lambda x_{i}^{2}\right)=\sum_{i=1}^{d}c_{i}-\lambda\lVert x\rVert^{2}.
\end{align*}
\end{proof}
As it would be more apparent in the next section of gradient SDEs,
Theorem \ref{thm:OddPolyDrift} has many physical applications. They
model multi-stable systems such as modulated Josephson-junctions systems,
superionic conductors, excited chicken hearts to the dithered ring
lasers as well as other laser systems. We refer to \cite{ZhouMossJung}
and references therein for further details of these applications. 

We give two specific examples of Theorem \ref{thm:OddPolyDrift}.
First, we consider periodically forced mean-reverting Ornstein-Uhlenbeck
processes. In this example, we compute the density of the process,
periodic measure and the exponential convergence rate explicitly.
While the computations are straightforward, it appears that the periodic
measure and its geometric convergence for this system has not been
previously noted in literature. The classical Ornstein-Uhlenbeck process
is mean-reverting and has a geometric invariant measure. This contrasts
with periodically forced Ornstein-Uhlenbeck processes which does not
have limiting invariant measure. Instead, the system has a limiting
periodic measure and mean-reverting to a periodic mean. We expect
this to be useful for systems possessing periodic mean reversion due
to factors such as seasonality. 
\begin{example}
\label{exa:PeriodicOUExample}Consider the following multidimensional
Ornstein-Uhlenbeck equation 
\begin{equation}
dX_{t}=\left(S(t)-AX_{t}\right)dt+\sigma dW_{t},\label{eq:Multidimensional_OU}
\end{equation}
where $A=M^{-1}DM\in\mathbb{R}^{d\times d}$ for some $M\in GL(\mathbb{R}^{d})$
and $D=\text{diag}(\lambda_{1},\cdot\cdot\cdot,\lambda_{d})$ is a
diagonal matrix with positive eigenvalues $\{\lambda_{n}\}_{n=1}^{d}$,
$\sigma\in GL(\mathbb{R}^{d})$ and $S(t):\mathbb{R}^{+}\rightarrow\mathbb{R}^{d}$
be a $T$-periodic continuously differentiable function. By applying
Itô's formula on $e^{tA}X_{t}$ or by a variation of constants formula,
we have 
\begin{equation}
X_{t}=e^{-(t-s)A}X_{s}+\int_{s}^{t}e^{-(t-r)A}S(r)dr+\int_{s}^{t}e^{-(t-r)A}\sigma dW_{r}\quad t\geq s.\label{eq:VariationOfConst}
\end{equation}
Observe that $\xi(t):=\int_{-\infty}^{t}e^{-A(t-r)}S(r)dr$ satisfies
$\partial_{r}(e^{Ar}\xi)=e^{Ar}S(r)$ and is continuous and $T$-periodic.
Then
\begin{align}
J(s,t): & =\int_{s}^{t}e^{-(t-r)A}S(r)dr=\xi(t)-e^{-(t-s)A}\xi(s).\label{eq:J(s,t)}
\end{align}
By the $T$-periodicity of $\xi$, it is clear $\lim_{t\rightarrow\infty}J(s,t)$
does not converge. Instead, it converges along integral multiples
of the period in the following way: let $Id$ be the identity matrix
on $\mathbb{R}^{d}$ and define
\begin{align*}
\xi_{n}(s) & :=J(s,s+nT)=(Id-e^{-nTA})\xi(s),\quad n\in\mathbb{N},
\end{align*}
Then $\xi(s)=\lim_{n\rightarrow\infty}\xi_{n}(s)$. We shall see $\xi(s)$
as the ``long term periodic mean''. From (\ref{eq:VariationOfConst}),
it is easy to see that $X_{t}$ is normally distributed. Specifically,
we can compute
\[
\mathbb{E}^{s,x}[X_{t}]=e^{-(t-s)A}x+J(s,t).
\]
Since $A=M^{-1}DM$ then $e^{-(t-r)A}=M^{-1}e^{-(t-r)D}M$, thus denoting
$N=M\sigma$, component-wise, we have $(e^{-(t-r)D}NdW_{r})_{i}=e^{-(t-r)\lambda_{i}}\sum_{k=1}^{d}N_{ik}dW_{r}^{k}$.
Hence by independence of Brownian motion and properties of Itô's inner-product,
we have
\begin{align*}
C_{ij}(s,t) & :=\mathbb{E}^{s,x}\left[\left(\int_{s}^{t}(e^{-(t-r)D}NdW_{r})_{i}\right)\left(\int_{s}^{t}(e^{-(t-r)D}NdW_{r})_{j}\right)\right]\\
 & =\sum_{k,k'=1}^{d}N_{ik}N_{jk'}\mathbb{E}^{s,x}\left[\left(\int_{s}^{t}e^{-(t-r)\lambda_{i}}dW_{r}^{k}\right)\left(\int_{s}^{t}e^{-(t-r)\lambda_{j}}dW_{r}^{k'}\right)\right]\\
 & =\sum_{k=1}^{d}N_{ik}N_{jk}\mathbb{E}^{s,x}\left[\int_{s}^{t}e^{-(t-r)(\lambda_{i}+\lambda_{j})}dr\right]\\
 & =\frac{(M\sigma\sigma^{T}M^{T})_{ij}}{\lambda_{i}+\lambda_{j}}\left(1-e^{-(t-s)(\lambda_{i}+\lambda_{j})}\right).
\end{align*}
Hence the covariance matrix $\text{Cov}(X_{t}|X_{s}=x):=\mathbb{E}^{s,x}[X_{t}X_{t}^{T}]-\mathbb{E}^{s,x}[X_{t}]\mathbb{E}^{s,x}[X_{t}^{T}]=M^{-1}C(s,t)M$,
where $C(s,t)$ has entries $C_{ij}(s,t)$ as defined above. Thus,
denoting $\mathcal{N}$ for the multivariate normal distribution,
the Markov transition kernel of (\ref{eq:Multidimensional_OU}) is
given by
\begin{equation}
P(s,t,x,\cdot)=\mathcal{N}\left(e^{-(t-s)A}x+J(s,t),M^{-1}C(s,t)M\right)(\cdot),\label{eq:LawOfPeriodicMultidimensionalOU}
\end{equation}
\end{example}
Since $\lim_{t\rightarrow\infty}J(s,t)$ does not converges (for any
fixed $s$), (\ref{eq:LawOfPeriodicMultidimensionalOU}) does not
converge. This implies there does not exist a limiting invariant measure
for this periodically forced Ornstein-Uhlenbeck process. This contrasts
with the classical Ornstein-Uhlenbeck process (when $S(t)=\text{const})$,
where one often take $t\rightarrow\infty$ to yield a (unique) limiting
invariant measure. On the other hand, for every fixed $s$, along
integral multiple of the period i.e. $t=s+nT$, one has directly from
(\ref{eq:LawOfPeriodicMultidimensionalOU})

\begin{align}
P(s,s+nT,x,\cdot) & =\mathcal{N}\left(e^{-nTA}x+\xi_{n}(s),M^{-1}C(s,s+nT)M\right)(\cdot)\nonumber \\
 & \rightarrow\mathcal{N}\left(\xi(s),M^{-1}CM\right)(\cdot)=:\rho_{s}(\cdot),\label{eq:PM_OU}
\end{align}
as $n\rightarrow\infty$ where $C$ is the matrix with entries $C_{ij}=\frac{(M\sigma\sigma^{T}M^{T})_{ij}}{\lambda_{i}+\lambda_{j}}$.
That is to say that the long-time behaviour is characterised by $\rho_{s}$
for every fixed $s\geq0$. Since $\xi$ is $T$-periodic, $\rho$
is also $T$-periodic. Moreover, it easy to explicitly verified that
$\rho$ is periodic measure of the system. It is worth noting that
for every $s$, $\text{supp}(\rho_{s})=\mathbb{R}^{d}$ and that the
periodic measure (\ref{eq:PM_OU}) is unique. This contrasts with
the time-homogeneous Markovian systems, where the uniqueness of periodic
measure (if exist) holds only if it is supported by disjoint Poincaré
sections \cite{CFHZ2016}. 

The above calculations gives the existence and uniqueness of a periodic
measure. However, it does not immediately give a convergence rate.
For simplicity, we show the convergence and its rate for the one-dimensional
case. We first recall that the Kullback-Leibler divergence, $D_{KL}(\cdot||\cdot)$,
is pre-metric on $\mathcal{P}(\mathbb{R}^{d})$. Let $P,Q\in\mathcal{P}(\mathbb{R}^{d})$
with densities $p,q\in L^{1}(\mathbb{R}^{d})$ respective, the Kullback-Leibler
divergence can be defined by
\begin{align*}
D_{KL}(P||Q): & =\int_{\mathbb{R}^{d}}p(x)\log\left(\frac{p(x)}{q(x)}\right)dx.
\end{align*}
For vectors $\mu_{i}\in\mathbb{R}^{d}$ and matrices $\sigma_{i}\in GL(\mathbb{R}^{d})$
where $i=1,2$, we have specifically the following explicit expression
for normal densities. 
\[
D_{KL}(\mathcal{N}(\mu_{1},\sigma_{1})||\mathcal{N}(\mu_{2},\sigma_{2}))=\frac{1}{2}\left(\text{Tr}(\sigma_{2}^{-1}\sigma_{1})+(\mu_{2}-\mu_{1})^{T}\sigma_{2}^{-1}(\mu_{2}-\mu_{1})-d+\ln\left(\frac{\det(\sigma_{2})}{\det(\sigma_{1})}\right)\right).
\]
Moreover, Pinsker's inequality states 
\[
\left\lVert P-Q\right\rVert _{TV}^{2}\leq\frac{1}{2}D_{KL}(P||Q),\quad P,Q\in\mathcal{P}(E).
\]
We recall the elementary identity $\ln(1-y)=-\sum_{k=1}^{\infty}\frac{y^{k}}{k}$
for any fixed $y\in(-1,1)$. Hence, the following elementary inequality
holds by a geometric sum
\begin{align*}
-(y+\ln(1-y)) & =\sum_{k=2}^{\infty}\frac{y^{k}}{k}\leq\frac{y}{2}\sum_{k=1}^{\infty}y^{k}\leq\frac{y}{2}\frac{1}{1-y},\quad y\in(0,1).
\end{align*}
Now, since both $\rho_{s+t}$ and $P(s,s,+t,x,\cdot)$ are normally
distributed, by Pinsker's inequality and (\ref{eq:J(s,t)}), for all
$t\geq\delta$ and for any fixed $\delta>0$, $x\in\mathbb{R}^{d}$
\begin{align*}
\left\lVert P(s,s+t,x,\cdot)-\rho_{s+t}\right\rVert _{TV}^{2} & \leq\frac{1}{2}D_{KL}(P(s,s+t,x,\cdot)||\rho_{s+t})\\
 & =\frac{1}{4}\left(1-e^{-2tA}+\frac{(\xi(s+t)-e^{-tA}x-J(s,s+t))^{2}}{\sigma^{2}/2\alpha}-1-\ln(1-e^{-2tA})\right)\\
 & =\frac{1}{4}\left(\frac{e^{-2tA}(\xi(s)-x)^{2}}{\sigma^{2}/2\alpha}+\frac{1}{2}\frac{e^{-2tA}}{1-e^{-2tA}}\right)\\
 & \leq\frac{e^{-2tA}}{\sigma^{2}}\frac{A}{2}\left((\xi(s)-x)^{2}+\frac{\sigma^{2}}{4A}\frac{1}{1-e^{-2\delta A}}\right).
\end{align*}
Deducing indeed the convergence is geometric. We go a little further
solely to align with Theorem \ref{thm:GeometricConvergence-PeriodicMeasure}.
For every fixed $s\in[0,T)$ and for any fixed $\gamma>0$, there
exists a constant $r_{s}=r_{s}(\gamma)>0$ such that 
\[
(x-\xi(s))^{2}-(1+\gamma)x^{2}=-2\xi(s)x+\xi^{2}(s)-\gamma x^{2}\leq r_{s}.
\]
Hence $(x-\xi(s))^{2}\leq(1+\gamma)x^{2}+r_{s}$. Define $R_{s}:=\max\left\{ 1+\gamma,r_{s}+\frac{\sigma^{2}}{4A}\frac{1}{1-e^{-2A\delta}}\right\} >1$,
then 
\begin{align*}
\left\lVert P(s,s+t,x,\cdot)-\rho_{s+t}\right\rVert _{TV}^{2} & \leq\frac{e^{-2At}}{\sigma^{2}}\frac{A}{2}R_{s}\left(x^{2}+1\right)\leq e^{-2At}\left(\sqrt{\frac{\alpha}{2}}\frac{R_{s}}{\sigma}\right)^{2}\left(x^{2}+1\right)^{2},
\end{align*}
where we trivially squared the last two terms. Specifically by letting
$t=nT$, we have geometric ergodicity of the grid chain 
\[
\left\lVert P(s,s+nT,x,\cdot)-\rho_{s}\right\rVert _{TV}^{2}\leq e^{-2nTA}\left(\sqrt{\frac{A}{2}}\frac{R_{s}}{\sigma}\right)^{2}\left(x^{2}+1\right)^{2},\quad n\in\mathbb{N}.
\]
For computationally inclined readers, we give explicit formula for
$\xi_{t}$ in the one dimensional case. Multidimensional case can
be computed similarly. By Fourier Series, for any $S\in L^{2}[0,T]$,
$S$ can be represented by
\[
S(t)=\frac{A_{0}}{2}+\sum_{n=1}^{\infty}A_{n}\cos\left(\frac{2n\pi}{T}t-n\pi\right)+B_{n}\sin\left(\frac{2n\pi}{T}t-n\pi\right),
\]
with the usual Fourier coefficients for $n\in\mathbb{N}\backslash\{0\}$
\[
A_{n}=\frac{2}{T}\int_{0}^{T}S(t)\cos\left(\frac{2n\pi}{T}t-n\pi\right)dt,\quad B_{n}=\frac{2}{T}\int_{0}^{T}S(t)\sin\left(\frac{2n\pi}{T}t-n\pi\right)dt.
\]
It is trivial to see $\xi_{0}:=\frac{1}{A}\frac{A_{0}}{2}$ satisfies
$\partial_{t}(e^{At}\xi_{0})=\frac{A_{0}}{2}e^{tA}$. Similarly, 
\[
\xi_{n}^{cos}(t):=\frac{1}{A}\frac{T^{2}\cos\left(n\pi-\frac{2n\pi}{T}t\right)-2n\pi T\sin\left(n\pi-\frac{2n\pi}{T}t\right)}{4\pi^{2}n^{2}+T^{2}}
\]
satisfies $\partial_{t}(e^{tA}\xi_{n}^{cos}(t))=e^{tA}\cos\left(\frac{2n\pi}{T}t-n\pi\right)$
and 
\[
\xi_{n}^{sin}(t):=-\frac{1}{A}\frac{T^{2}\sin\left(n\pi-\frac{2n\pi}{T}t\right)+2n\pi T\cos\left(n\pi-\frac{2n\pi}{T}t\right)}{4\pi^{2}n^{2}+T^{2}}
\]
satisfies $\partial_{t}(e^{tA}\xi_{n}^{sin}(t))=e^{tA}\sin\left(\frac{2n\pi}{T}T-n\pi\right)$.
Clearly $\xi_{n}^{cos}$ and $\xi_{n}^{sin}$ are both $T$-periodic
and $\xi(t):=\xi_{0}+\sum_{i=1}^{\infty}A_{n}\xi_{n}^{cos}(t)+B_{n}\xi_{n}^{sin}(t)$
is the desired $T$-periodic continuous (hence) bounded function satisfying
$\partial_{t}(e^{tA}\xi)=e^{tA}S$. 
\begin{example}
\label{Exa:DuffingOsc} The stochastic overdamped Duffing Oscillator
has many physical applications including the phenomena of stochastic
resonance in climate dynamics modelling of ice age \cite{BenziStochRes,NicolisPeriodicForcing,JungPerioidcallyDrivenSys}
as mentioned in details in the introduction. We also expect applications
to periodically-forced model of price dynamics in the financial markets
akin to \cite{Lima_Miranda_Econ} with a similar interpretation. The
Duffing Oscillator is given by
\begin{equation}
dX_{t}=\left[-X_{t}^{3}+X_{t}+A\cos(\omega t)\right]dt+\sigma dW_{t},\label{eq:Duffing}
\end{equation}
where $A,\omega\in\mathbb{R}$ and $\sigma\neq0$ are
parameters. In the Benzi-Parisi-Sutera-Vulpiani climate change stochastic resonance model, 
$\omega =2\pi /10^5$ and the two stable equilibrium climates are distanced by $10K$. 
The stochastic differential equation (\ref{eq:Duffing}) is a normalised equation of the 
Benzi-Parisi-Sutera-Vulpiani model. According to Corollary \ref{zhao2019.4a}, there exists a 
unique periodic measure which is geometric ergodic.
\end{example}
\begin{rem}
Through the theory of non-autonomous RDS, \cite{CherubiniStochRes}
gave the existence and uniqueness of the periodic measure for (\ref{eq:Duffing})
in one dimension. Note that Theorem \ref{thm:OddPolyDrift} goes further
than \cite{CherubiniStochRes} to infer that the convergence is actually
geometric. Moreover, Theorem \ref{thm:OddPolyDrift} gives the other
types of converges presented in Theorem \ref{thm:Geometric_PM_on_E}.
To our knowledge, this paper contains the first proof of the geometric
ergodicity of the stochastic overdamped Duffing Oscillator. We note
also that the approach we have taken works in multidimensional case
and is completely different to the that of \cite{CherubiniStochRes}.
As mentioned in the introduction, we expect our approach can be extended
to the infinite dimensional setting of SPDEs.
\end{rem}

\subsubsection{Gradient Systems}

In this section, we give results for the existence and uniqueness
of geometric periodic measures for stochastic $T$-periodic gradient
systems. These are SDEs of the form 

\begin{equation}
dX_{t}=-\nabla V(t,X_{t})dt+\sigma(X_{t})dW_{t},\label{eq:GradientSDE}
\end{equation}
where $V\in C^{1,2}(\mathbb{R}^{+}\times\mathbb{R}^{d})$ is $T$-periodic,
$\nabla=(\partial_{1},\cdot\cdot\cdot,\partial_{d})$ is the spatial
gradient operator, $W_{t}$ denotes a $d$-dimensional Brownian motion
and $\sigma:\mathbb{R}^{d}\rightarrow\mathbb{R}^{d\times d}$. Note
that the $T$-periodicity of $V$ implies the $T$-periodicity of
$\nabla V$ is $T$-periodic hence the gradient SDE (\ref{eq:GradientSDE})
is $T$-periodic. 

Gradient systems arise naturally in physical applications, where $V$
is referred to as the potential function \cite{Gardiner_StochMethods,MattinglyStuartHigham_ErgodicityForSDEs,Pavliotis_LangevinTextbook}.
Indeed examples of $T$-periodic gradient systems, includes the periodic
forced Ornstein-Uhlenbeck from Example \ref{exa:PeriodicOUExample}
derived from $V(t,x)=\frac{\alpha}{2}\left(x-\frac{S(t)}{\alpha}\right)^{2}$
and the Duffing Oscillator from Example \ref{Exa:DuffingOsc} derived
from double-well potential $V(t,x)=\frac{1}{4}x^{4}-\frac{1}{2}x^{2}+A\cos(\omega t)x$.
In fact, it is easy to verify that the Theorem \ref{thm:OddPolyDrift}
is a special case of gradient SDEs derived from potential $V(t,x)=\sum_{i=1}^{d}\sum_{k=1}^{2p_{i}}\frac{S_{k}^{i}(t)}{k+1}x_{i}^{k+1}$.
While these examples are weakly dissipative where the Euclidean norm
is a suitable Foster-Lyapunov function satisfying (\ref{eq:GeometricDriftCdn}),
in general, finding a Foster-Lyapunov function satisfying (\ref{eq:GeometricDriftCdn})
for a given SDE is generally non-trivial (if at all possible) particularly
in higher dimensions. A mathematical advantage of gradient systems
is that $V$ itself is a natural choice of Foster-Lyapunov function
to satisfy (\ref{eq:GeometricDriftCdn}). This is apparent by observing
the generator of (\ref{eq:GradientSDE}) is given by 
\begin{equation}
\mathcal{L}(t)V(t,x)=\partial_{t}V(t,x)-\left\lVert \nabla V(t,x)\right\rVert ^{2}+\frac{1}{2}\sum_{i,j=1}^{d}\left(\sigma\sigma^{T}(x)\right)_{ij}\partial_{ij}^{2}V(t,x),\label{eq:GradientGenerator}
\end{equation}
and exploiting the norm term. 

For autonomous gradient SDEs derived from a norm-like potential $V(t,x)=V(x)$
and noise proportional to the identity $\sigma\in\mathbb{R}^{+}\backslash\{0\}$,
it is well-known \cite{MarkowichVillaniTrendToEquil,Gardiner_StochMethods,Pavliotis_LangevinTextbook}
that the invariant measure has a particularly simple form and is given
by (upon normalisation) $\pi(\Gamma)=\int_{\Gamma}\exp\left(-\frac{2V(x)}{\sigma^{2}}\right)dx$
for $\Gamma\in\mathcal{B}$. However, due to the intricate interplay
between stochasticity and periodicity, periodic measures (with a minimal
positive period) does not have such simple expression. Indeed the
periodic measure (\ref{eq:PM_OU}) from Example \ref{exa:PeriodicOUExample}
does not take this simple form i.e. $\rho_{s}(\Gamma)\neq\int_{\Gamma}\exp\left(-\frac{2V(s,x)}{\sigma^{2}}\right)dx$. 

The following corollary of Theorem \ref{thm:GeometricConvergence-PeriodicMeasure}
is generally simple to verify to yield gradient SDEs with a geometric
periodic measure.
\begin{cor}
\label{cor:GradientSDEViaV}Assume $\sigma$ satisfy (\ref{eq:LinearGrowth_sigma}),
(\ref{eq:Sigma_and_inverse_bounded}) and (\ref{eq:LocallySmoothSigma}).
Let $V\in C^{1,2}(\mathbb{R}^{+}\times\mathbb{R}^{d})$ be a norm-like
function such that for all $n\in\mathbb{N}$
\[
\partial^{\alpha}V\quad\text{bounded on }\mathbb{R}^{+}\times B_{n},\alpha\in\mathbb{N}^{d+1},\lvert\alpha\rvert\in\{1,d+1\},
\]
and (\ref{eq:GeometricDriftCdn}) holds, where $\mathcal{L}(t)$ is
given by (\ref{eq:GradientGenerator}). Then the results of Theorem
\ref{thm:Geometric_PM_on_E} holds for SDE (\ref{eq:GradientSDE}).
\end{cor}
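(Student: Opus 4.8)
The plan is to read off Corollary~\ref{cor:GradientSDEViaV} as a direct specialisation of Theorem~\ref{thm:GeometricConvergence-PeriodicMeasure}. Indeed, the gradient SDE~(\ref{eq:GradientSDE}) is precisely the instance of the general $T$-periodic SDE~(\ref{eq:NonAutonSDE}) in which the drift is $b(t,x)=-\nabla V(t,x)$ and the diffusion $\sigma(t,x)=\sigma(x)$ is time-independent (as is always demanded once~(\ref{eq:LocallySmoothSigma}) is assumed). Since $V$ is $T$-periodic, so is $b=-\nabla V$, hence~(\ref{eq:GradientSDE}) is a bona fide $T$-periodic SDE. By hypothesis $\sigma$ already satisfies~(\ref{eq:LinearGrowth_sigma}), (\ref{eq:Sigma_and_inverse_bounded}) and~(\ref{eq:LocallySmoothSigma}), so only two conditions of Theorem~\ref{thm:GeometricConvergence-PeriodicMeasure} remain to be checked for this choice of $b$: the local smoothness/boundedness condition~(\ref{eq:LocallySmoothDrift}), and the geometric drift condition~(\ref{eq:GeometricDriftCdn}).

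First I would verify~(\ref{eq:LocallySmoothDrift}). Fix $n\in\mathbb{N}$. The assumption that $\partial^{\alpha}V$ is bounded on $\mathbb{R}^{+}\times B_{n}$ for every $\alpha\in\mathbb{N}^{d+1}$ with $|\alpha|=1$ already gives that $b=-\nabla V$ is bounded on $\mathbb{R}^{+}\times B_{n}$. For a multi-index $\alpha\in\mathbb{N}^{d+1}$ with $|\alpha|=d$, the $i$-th component of $\partial^{\alpha}b$ is $-\partial^{\alpha}\partial_{i}V$, which is a (time-and-space) partial derivative of $V$ of total order $d+1$; the hypothesis at order $|\alpha|=d+1$ therefore bounds it on $\mathbb{R}^{+}\times B_{n}$. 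Adding the two estimates shows $b+\partial^{\alpha}b$ is bounded on $\mathbb{R}^{+}\times B_{n}$, which is exactly~(\ref{eq:LocallySmoothDrift}). This is the reason the corollary imposes bounds precisely at the orders $|\alpha|\in\{1,d+1\}$.

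Next I would record the generator identity: substituting $b_{i}=-\partial_{i}V$ into~(\ref{eq:Generator}) and evaluating on $f=V$ yields exactly the expression~(\ref{eq:GradientGenerator}). Consequently the inequality assumed in the corollary, written with $\mathcal{L}(t)$ as in~(\ref{eq:GradientGenerator}), is literally the geometric drift condition~(\ref{eq:GeometricDriftCdn}) for the SDE~(\ref{eq:GradientSDE}), with the same constants $C\ge 0$ and $\lambda>0$. Since moreover $V\in C^{1,2}(\mathbb{R}^{+}\times\mathbb{R}^{d},\mathbb{R}^{+})$ is $T$-periodic and norm-like by hypothesis, all the hypotheses of Theorem~\ref{thm:GeometricConvergence-PeriodicMeasure} are met, and its conclusion---that Theorem~\ref{thm:Geometric_PM_on_E} holds for~(\ref{eq:GradientSDE})---gives the assertion.

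There is no genuine obstacle here: Corollary~\ref{cor:GradientSDEViaV} is essentially a repackaging of Theorem~\ref{thm:GeometricConvergence-PeriodicMeasure} for gradient systems. The only point needing care is the derivative bookkeeping in the middle step: one must make sure the two prescribed orders $|\alpha|=1$ and $|\alpha|=d+1$ on $V$ really cover the boundedness of $b$ and of all its order-$d$ derivatives, and that no intermediate order is needed---which holds because~(\ref{eq:LocallySmoothDrift}) only asks for derivatives of the single order $|\alpha|=d$.
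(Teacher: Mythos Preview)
Your proposal is correct and matches the paper's intent exactly: the paper states Corollary~\ref{cor:GradientSDEViaV} without proof, treating it as an immediate specialisation of Theorem~\ref{thm:GeometricConvergence-PeriodicMeasure} with $b=-\nabla V$. Your verification that the derivative hypotheses on $V$ at orders $1$ and $d+1$ translate precisely into condition~(\ref{eq:LocallySmoothDrift}) on $b$ is the only bookkeeping needed, and you have it right.
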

While Corollary \ref{cor:GradientSDEViaV} covers all the examples
considered thus far, it applies to a wider class of SDEs than that
of weakly dissipative systems. In the next proposition, we use Corollary
\ref{cor:GradientSDEViaV} to extend the case of Theorem \ref{thm:OddPolyDrift}
when $p_{i}=\text{const}$ for all $i$ and allowing for products
of the spatial variables. It does not aim to be most general however
suffices a range of applications. We shall employ more standard multi-index
notation: for spatial variables $x=(x_{1},\cdot\cdot\cdot,x_{d})$
and multi-index $\alpha\in\mathbb{N}^{d}$ , define $x^{\alpha}:=x_{1}^{\alpha_{1}}\cdot\cdot\cdot x_{d}^{\alpha_{d}}$.
For $\alpha,\beta\in\mathbb{N}^{d}$, we have the partial ordering
$\alpha\geq\beta$ if $\alpha_{i}\geq\beta_{i}$ for each $1\leq i\leq d$.
We define the standard tuple basis $e_{i}=(0,\cdot\cdot\cdot,1,\cdot\cdot\cdot,0)$
where the $1$ appears on the $i$'th index. For fixed $\beta\in\mathbb{N}^{d}$,
we define $\sum_{\alpha\geq\beta}^{N}C_{\alpha}:=\sum_{\alpha\geq\beta}^{|\alpha|\leq N}C_{\alpha}.$
Recall standard asymptotic notation where for functions $f_{1},f_{2},g:\mathbb{R}^{d}\rightarrow\mathbb{R}$,
we write $\max\{f_{1},f_{2}\}=o(g)$ if $\lim_{\lVert x\rVert\rightarrow\infty}\frac{\max\{\lvert f_{1}(x)\rvert,\lvert f_{2}(x)\rvert\}}{g(x)}=0.$
This implies that for any $\epsilon>0$, there exists $R>0$ such
that 
\begin{equation}
\max\{\lvert f_{1}(x)\rvert,\lvert f_{2}(x)\rvert\}\leq\epsilon\lvert g(x)\lvert,\quad x\in B_{R}^{c}.\label{eq:Little_o_bound}
\end{equation}

\begin{prop}
\label{Prop:MultidimensionalPotential}Assume $\sigma$ satisfy (\ref{eq:LinearGrowth_sigma}),
(\ref{eq:Sigma_and_inverse_bounded}) and (\ref{eq:LocallySmoothSigma}).
Let $\left\{ S_{\alpha}(t)\right\} _{\alpha\in\mathbb{N}^{d}}$ be
continuously differentiable $T$-periodic functions and $\{S_{i}\}_{i=1}^{d}$
are strictly positive constants. Then the gradient system (\ref{eq:GradientSDE})
with potential
\[
V(t,x)=\sum_{i=1}^{d}S_{i}x_{i}^{p}+\sum_{|\alpha|=0}^{p-1}S_{\alpha}(t)x^{\alpha},\quad p\in2\mathbb{N}:=\{2,4,...,\},
\]
satisfies Corollary \ref{cor:GradientSDEViaV} hence the results of
Theorem \ref{thm:Geometric_PM_on_E} holds.
\end{prop}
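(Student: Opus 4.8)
The plan is to verify the hypotheses of Corollary~\ref{cor:GradientSDEViaV} for the potential $V$. The requirements there on $\sigma$ --- namely \eqref{eq:LinearGrowth_sigma}, \eqref{eq:Sigma_and_inverse_bounded} and \eqref{eq:LocallySmoothSigma} --- are precisely those assumed in the proposition, so they are free. What remains is to check that $V$ is norm-like, that $\partial^{\alpha}V$ is bounded on every tube $\mathbb{R}^{+}\times B_{n}$ for $|\alpha|\in\{1,d+1\}$, and that the geometric drift condition \eqref{eq:GeometricDriftCdn} holds with the generator \eqref{eq:GradientGenerator}. Since the definition of norm-like requires $V\geq0$, we first observe that adding a constant to $V$ changes neither $\mathcal{L}(t)V$ nor the derivative bounds and only enlarges the constant $C$ in \eqref{eq:GeometricDriftCdn}; as the coercivity estimate below shows $V$ is bounded from below, so after adding a suitable constant we may and do assume $V\geq0$.

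Write $V(t,x)=\sum_{i=1}^{d}S_{i}x_{i}^{p}+R(t,x)$ with $R(t,x)=\sum_{|\alpha|\leq p-1}S_{\alpha}(t)x^{\alpha}$, a polynomial in $x$ of degree at most $p-1$ whose coefficients are $T$-periodic and continuously differentiable in $t$; in particular $V\in C^{1,2}(\mathbb{R}^{+}\times\mathbb{R}^{d})$. By $T$-periodicity and continuity, $\sup_{t\geq0}|S_{\alpha}(t)|=\sup_{t\in[0,T]}|S_{\alpha}(t)|<\infty$, and likewise for $S_{\alpha}'$, so there is a constant $C_{0}$, independent of $t$, with $|R(t,x)|\leq C_{0}(1+\lVert x\rVert^{p-1})$. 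Since $p$ is even, the power-mean inequality gives $\sum_{i}S_{i}x_{i}^{p}\geq(\min_{i}S_{i})\sum_{i}x_{i}^{p}\geq c_{1}\lVert x\rVert^{p}$ for some $c_{1}>0$, so
\[
V(t,x)\geq c_{1}\lVert x\rVert^{p}-C_{0}(1+\lVert x\rVert^{p-1})\longrightarrow\infty\qquad\text{as }\lVert x\rVert\to\infty,
\]
uniformly in $t$; hence $V$ is norm-like and bounded below. Similarly, for $|\alpha|\in\{1,d+1\}$ the derivative $\partial^{\alpha}V$ (whose order in $t$ is at most one, consistent with the $C^{1}$ regularity in time) is again a polynomial in $x$ with $T$-periodic continuous coefficients, hence bounded on each $\mathbb{R}^{+}\times B_{n}$.

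The substantive step is the geometric drift condition. From \eqref{eq:GradientGenerator}, $\mathcal{L}(t)V=\partial_{t}R-\lVert\nabla V\rVert^{2}+\tfrac12\sum_{i,j}(\sigma\sigma^{T})_{ij}\partial^{2}_{ij}V$. Since $\nabla_{i}V=pS_{i}x_{i}^{p-1}+\partial_{i}R$ with $\partial_{i}R$ of degree at most $p-2$, expanding the square gives $\lVert\nabla V\rVert^{2}=\sum_{i}p^{2}S_{i}^{2}x_{i}^{2p-2}+E(t,x)$ with $|E(t,x)|\leq C_{2}(1+\lVert x\rVert^{2p-3})$ uniformly in $t$, and the leading term satisfies $\sum_{i}p^{2}S_{i}^{2}x_{i}^{2p-2}\geq c_{0}\lVert x\rVert^{2p-2}$ for some $c_{0}>0$ by the power-mean inequality. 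The term $\partial_{t}R$ has degree at most $p-1$ with bounded coefficients, and since $(\sigma\sigma^{T})_{ij}$ is bounded by \eqref{eq:Sigma_and_inverse_bounded} while $\partial^{2}_{ij}V$ has degree at most $p-2$, the last term is $O(1+\lVert x\rVert^{p-2})$ uniformly in $t$. Fixing $\lambda\in(0,4\min_{i}S_{i})$ and using $V(t,x)\leq C_{3}(1+\lVert x\rVert^{p})$ we obtain
\[
\mathcal{L}(t)V+\lambda V\leq-\sum_{i=1}^{d}p^{2}S_{i}^{2}x_{i}^{2p-2}+\lambda\sum_{i=1}^{d}S_{i}x_{i}^{p}+C_{4}\bigl(1+\lVert x\rVert^{\max\{2p-3,\,p-1\}}\bigr).
\]
If $p\geq3$ then $2p-2>p$, the negative term of degree $2p-2$ dominates every remaining term, and the right-hand side is bounded above by a constant. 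If $p=2$ then $2p-2=p=2$, the two leading contributions combine into $-\sum_{i}(4S_{i}^{2}-\lambda S_{i})x_{i}^{2}\leq-c\lVert x\rVert^{2}$ with $c>0$ by the choice of $\lambda$, the remaining terms are $O(1+\lVert x\rVert)$, and the right-hand side is again bounded by a constant. In both cases \eqref{eq:GeometricDriftCdn} holds, so all hypotheses of Corollary~\ref{cor:GradientSDEViaV} are met and the conclusion of Theorem~\ref{thm:Geometric_PM_on_E} follows for \eqref{eq:GradientSDE}.

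The only delicate point is the degree bookkeeping above, together with the borderline case $p=2$, where the dissipative term $-\lVert\nabla V\rVert^{2}$ and $\lambda V$ are of the same order and one must take $\lambda$ small relative to $\min_{i}S_{i}$; for $p\geq3$ the coercivity of $\lVert\nabla V\rVert^{2}$, which grows like $\lVert x\rVert^{2p-2}$, beats the $\lVert x\rVert^{p}$ growth of $V$ with room to spare.
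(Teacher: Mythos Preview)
Your argument is correct and follows essentially the same route as the paper: compute the pieces of $\mathcal{L}(t)V$, track the polynomial degrees, observe that the leading term $\sum_i p^2 S_i^2 x_i^{2p-2}$ in $\lVert\nabla V\rVert^2$ dominates everything else for $p\geq4$, and handle the borderline $p=2$ by restricting $\lambda$. Your write-up is in fact somewhat more complete than the paper's, since you explicitly verify that $V$ is norm-like, bounded below (hence may be shifted to be nonnegative), and satisfies the local derivative bounds required by Corollary~\ref{cor:GradientSDEViaV}; the paper focuses almost exclusively on the drift inequality. Your constraint $\lambda<4\min_i S_i$ in the $p=2$ case is the sharp one coming from $4S_i^2-\lambda S_i>0$, whereas the paper records $\lambda<4\min_i S_i^2$.
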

\begin{proof}
We compute
\[
\begin{cases}
\partial_{t}V=\sum_{|\alpha|=0}^{p-1}\dot{S}_{\alpha}x^{\alpha},\\
\partial_{i}V=S_{i}px_{i}^{p-1}+\sum_{\alpha\geq e_{i}}^{p-1}\alpha_{i}S_{\alpha}x^{\alpha-e_{i}},\\
\partial_{ii}^{2}V=p(p-1)S_{i}x_{i}^{2p-2}+\sum_{\alpha\geq2e_{i}}^{p-1}\alpha_{i}(\alpha_{i}-1)S_{\alpha}x^{(\alpha-2e_{i})},\\
\partial_{ij}^{2}V=\sum_{\alpha\geq e_{i}+e_{j}}^{p-1}S_{\alpha}\alpha_{i}\alpha_{j}x^{\alpha-e_{i}-e_{j}}, & i\neq j.
\end{cases}
\]
So
\[
\lVert\nabla V\rVert^{2}=\sum_{i=1}^{d}(\partial_{i}V)^{2}=\sum_{i=1}^{d}\left[S_{i}^{2}p^{2}x_{i}^{2p-2}+2S_{i}p\sum_{\alpha\geq e_{i}}^{p-1}\alpha_{i}S_{\alpha}x^{\alpha+(p-2)e_{i}}+\left(\sum_{\alpha\geq e_{i}}^{p-1}\alpha_{i}S_{\alpha}x^{\alpha-e_{i}}\right)^{2}\right].
\]
Note that $V$,$\partial_{t}V$, $\partial_{ij}^{2}V$ and $\left(\lVert\nabla V\rVert^{2}-\sum_{i=1}^{d}S_{i}^{2}p^{2}x_{i}^{2p-2}\right)$
has maximum order $p$, $p-1$, $p-3$ and $2p-3$ respectively. Our
assumptions ensures that $\max_{\alpha\in\mathbb{N}^{d}}(\sup_{t\in\mathbb{R}}\lvert S_{\alpha}(t)\rvert)<\infty$
and $\max_{i,j}\sup_{x\in\mathbb{R}^{d}}a_{ij}(x)<\infty$. Since
higher even powers dominates lower powers i.e. $x^{\alpha}=o(\sum_{i=1}^{d}c_{i}x_{i}^{2n})$
where $c_{i}>0$ and $\lvert\alpha\rvert<2n$ where $n\in\mathbb{N}$,
we have for any $\lambda>0$
\[
\max\left\{ \lambda V,\partial_{t}V,\partial_{ij}^{2}V,\left(\lVert\nabla V\rVert^{2}-\sum_{i=1}^{d}S_{i}^{2}p^{2}x_{i}^{2p-2}\right)\right\} =o\left(\sum_{i=1}^{d}S_{i}^{2}p^{2}x_{i}^{2p-2}\right),\quad2<p\in2\mathbb{N}.
\]
Then for $2<p\in2\mathbb{N}$, by (\ref{eq:Little_o_bound}), for
any $\epsilon\in(0,\frac{1}{4})$, there exists $R>0$ such that 
\begin{align*}
\mathcal{L}(t)V+\lambda V & \leq\lvert\partial_{t}V\rvert-\lVert\nabla V\rVert^{2}+\frac{1}{2}\lvert\sum_{i,j=1}^{d}a_{ij}V\rvert+\lambda V\leq(4\epsilon-1)\left(\sum_{i=1}^{d}S_{i}^{2}p^{2}x_{i}^{2p-2}\right)\leq0,\quad x\in B_{R}^{c}.
\end{align*}
By continuity, $\mathcal{L}(t)V+\lambda V$ is bounded on $B_{R}$.
Hence (\ref{eq:GeometricDriftCdn}) is satisfied. For $p=2$ where
$V$ and $\sum_{i=1}^{d}S_{i}^{2}p^{2}x_{i}^{2p-2}$ are of the same
order, the same calculations holds provided one restricts $0<\lambda<4\min_{i}S_{i}^{2}.$ 
\end{proof}
In physics literature, ``periodically forced'' or ``periodically
driven'' generally refers to the addition of a periodic term on the
drift which otherwise be autonomous i.e. $b(t,x)=b_{0}(x)+S(t)$ for
some periodic function $S$ and drift $b_{0}$ independent of $t$.
Particular instances of Proposition \ref{Prop:MultidimensionalPotential}
include periodically-forced systems such Example \ref{exa:PeriodicOUExample}
and Example \ref{Exa:DuffingOsc}. Mentioned examples so far are systems
with polynomial potentials. While polynomial approximation of potentials
(by Weierstrass approximation theorem for instance) can be effective
for practical reasons, we consider periodically forced gradient systems
that need not be derived from a polynomial potential. We remark that
periodically forced gradient SDEs occurs in physical applications
and phenomena as we have already seen in previous examples. For further
discussions on periodically forced stochastic systems, we refer readers
to \cite{JungPerioidcallyDrivenSys} for theory and applications.

Consider the following (autonomous) gradient SDE on $\mathbb{R}^{d}$
\begin{equation}
dX_{t}=-\nabla U(X_{t})+\sigma(X_{t})dW_{t},\label{eq:AutonomousGradSystem}
\end{equation}
where $\sigma$ satisfy (\ref{eq:LinearGrowth_sigma}), (\ref{eq:Sigma_and_inverse_bounded})
and (\ref{eq:LocallySmoothSigma}) and $U\in C^{2}(\mathbb{R}^{d},\mathbb{R}^{+})$
satisfies the (autonomous) geometric drift condition 
\begin{equation}
LU\leq C-\lambda U\quad\text{on }\mathbb{R}^{d},\label{eq:AutonGeometricDrift}
\end{equation}
where $C\geq0,\lambda>0$ are constants and $\mathcal{L}$ is the
infinitesimal generator of (\ref{eq:AutonomousGradSystem}) given
by
\[
Lf(x)=-\langle\nabla U(x),\nabla f(x)\rangle+\frac{1}{2}\sum_{i,j=1}^{d}(\sigma\sigma^{T}(x))_{ij}\partial_{ij}^{2}f(x),\quad f\in C^{2}(\mathbb{R}^{d}).
\]
This classical geometric drift condition yields the existence, uniqueness
and ergodicity of an invariant measure. The context of the next lemma
sufficiently yields a geometric periodic measure when the autonomous
gradient system is periodically forced. Essentially, the autonomous
system retains its stability up to replacing its invariant measure
for a periodic measure with a minimal positive period. Note that we
do not impose any particular form imposed on the potential, hence
more general than polynomials. We note that the assumptions are easily
satisfied for many practical systems.
\begin{prop}
Let $U\in C^{2}(\mathbb{R}^{d},\mathbb{R}^{+})$ be a norm-like potential
satisfying (\ref{eq:AutonGeometricDrift}) and that for any $c_{1},c_{2}>0$,
there exists a compact set $K=K(c_{1},c_{2})\subset\mathbb{R}^{d}$
such that 
\[
c_{1}\lVert x\rVert\leq c_{2}U(x)\quad x\in K^{c}.
\]
Then for any $T$-periodic ($T>0$) continuously differentiable function
$S:\mathbb{R}^{+}\rightarrow\mathbb{R}^{d}$, the periodically forced
gradient SDE
\[
dX_{t}=-\left[\nabla U(X_{t})+S(t)\right]dt+\sigma(X_{t})dW_{t}
\]
possesses a unique geometric periodic measure with a minimal positive
period.
\end{prop}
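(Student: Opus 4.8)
The plan is to exhibit this equation as a $T$-periodic gradient SDE of the form (\ref{eq:GradientSDE}), apply Corollary \ref{cor:GradientSDEViaV} to obtain a unique geometric periodic measure, and then use Proposition \ref{prop:CoExistence_with_IM} to upgrade ``periodic'' to ``minimal positive period''. Since $-\bigl[\nabla U(x)+S(t)\bigr]=-\nabla_{x}\bigl(U(x)+\langle S(t),x\rangle\bigr)$, the equation is (\ref{eq:GradientSDE}) with the $T$-periodic potential $\widetilde{V}(t,x):=U(x)+\langle S(t),x\rangle+c_{0}$, where $c_{0}\ge 0$ is chosen so that $\widetilde V\ge0$; such $c_{0}$ exists and $\widetilde V$ is norm-like because the hypothesis $c_{1}\lVert x\rVert\le c_{2}U(x)$ on $K^{c}$, applied with $c_{1}=\lVert S\rVert_{\infty}$, shows $U(x)-\lVert S\rVert_{\infty}\lVert x\rVert\to\infty$, whence $\tfrac12 U(x)\le\widetilde V(t,x)\le 2U(x)+c_{0}$ outside a large ball. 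The standing assumptions (\ref{eq:LinearGrowth_sigma}), (\ref{eq:Sigma_and_inverse_bounded}), (\ref{eq:LocallySmoothSigma}) on $\sigma$ are exactly those of Corollary \ref{cor:GradientSDEViaV}, and the required derivative bounds for $\widetilde V$ on $\mathbb{R}^{+}\times B_{n}$ are readily checked from the regularity of $U$ and $S$; so the only substantive point is the geometric drift condition (\ref{eq:GeometricDriftCdn}) for $\widetilde V$ with respect to the generator (\ref{eq:GradientGenerator}).

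For that, I would compute, using $D^{2}\widetilde V=D^{2}U$,
\[
\mathcal{L}(t)\widetilde V(t,x)=\langle\dot S(t),x\rangle-\lVert\nabla U(x)+S(t)\rVert^{2}+\tfrac12\sum_{i,j}(\sigma\sigma^{T})_{ij}(x)\,\partial^{2}_{ij}U(x),
\]
and compare with the autonomous generator $L$ of (\ref{eq:AutonomousGradSystem}): since $LU=-\lVert\nabla U\rVert^{2}+\tfrac12\sum_{i,j}(\sigma\sigma^{T})_{ij}\partial^{2}_{ij}U\le C-\lambda U$ by (\ref{eq:AutonGeometricDrift}), subtracting gives
\[
\mathcal{L}(t)\widetilde V(t,x)=\langle\dot S(t),x\rangle+LU(x)-2\langle\nabla U(x),S(t)\rangle-\lVert S(t)\rVert^{2}\le\langle\dot S(t),x\rangle+C-\lambda U(x)-2\langle\nabla U(x),S(t)\rangle .
\]
The linear term $\langle\dot S(t),x\rangle\le\lVert\dot S\rVert_{\infty}\lVert x\rVert$ is absorbed into $-\tfrac{\lambda}{4}U(x)$ outside a large ball by the super-linear growth of $U$. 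The genuine work is to dominate the perturbation $-2\langle\nabla U(x),S(t)\rangle$, which a priori grows like $\lVert\nabla U(x)\rVert$: here one must go back to the unperturbed inequality and extract the coercivity of $\nabla U$ it encodes, namely $\lVert\nabla U(x)\rVert^{2}-\tfrac12\sum_{i,j}(\sigma\sigma^{T})_{ij}\partial^{2}_{ij}U(x)\ge\lambda U(x)-C$, so that far out $-\lVert\nabla U+S\rVert^{2}+\tfrac12\sum_{i,j}(\sigma\sigma^{T})_{ij}\partial^{2}_{ij}U$ keeps a strictly negative multiple of $\lVert\nabla U\rVert^{2}$; combined with Young's inequality on the $S$-terms this yields $\mathcal{L}(t)\widetilde V\le C'-\lambda' U\le C''-\tfrac{\lambda'}{2}\widetilde V$ on $\mathbb{R}^{+}\times\mathbb{R}^{d}$, i.e. (\ref{eq:GeometricDriftCdn}).

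Granting this, Corollary \ref{cor:GradientSDEViaV} gives the conclusions of Theorem \ref{thm:Geometric_PM_on_E}: a unique periodic measure $\rho$ with geometric convergence. To see $\rho$ has a minimal positive period I would invoke Proposition \ref{prop:CoExistence_with_IM} on $E=\mathbb{R}^{d}$, so it suffices that $P$ be strong Feller and minimal $T$-periodic (here $T$, or more precisely the minimal period of the non-constant $S$, is the relevant period; for constant $S$ the equation is autonomous and the claim is vacuous). Strong Feller follows as in Theorem \ref{thm:SDE_limiting_PM}: non-degeneracy (\ref{eq:Sigma_and_inverse_bounded}) together with the local smoothness give a transition density, hence $P(s,t)\mathcal{B}_{b}(\mathbb{R}^{d})\subset C_{b}(\mathbb{R}^{d})$. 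For minimal $T$-periodicity, the coefficients $-\nabla U(x)-S(t),\sigma(x)$ have their only time-dependence through $S$; if $P(s,t,x,\cdot)=P(s+\delta,t+\delta,x,\cdot)$ held for some $\delta\in(0,T)$ and all $x,s\le t$, the generators would agree, since $\mathcal{L}(s)f=\lim_{h\downarrow0}h^{-1}(P(s,s+h)f-f)$ for $f\in C_{c}^{\infty}$ is determined by the kernel, and comparing first-order parts forces $S(s)=S(s+\delta)$ for all $s$ --- contradicting minimality of the period of $S$. Hence $\rho$ has a minimal positive period.

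The main obstacle is the drift-condition step: converting the qualitative autonomous condition $LU\le C-\lambda U$ into the quantitative gradient coercivity needed to beat the forcing term $-2\langle\nabla U,S\rangle$ for \emph{arbitrary} bounded $S$ while keeping $\widetilde V$ a legitimate Lyapunov function. Establishing minimal $T$-periodicity rigorously --- recovering the drift from the transition kernel --- is the secondary technical point, routine once the smoothness of the density is available.
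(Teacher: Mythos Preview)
Your overall strategy---cast the equation as a gradient SDE, verify the geometric drift condition for a Lyapunov function, then appeal to Proposition~\ref{prop:CoExistence_with_IM} for the minimal period---matches the paper's. The difference, and the source of your difficulty, is the choice of Lyapunov function.

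You take $\widetilde V(t,x)=U(x)+\langle S(t),x\rangle+c_0$, the actual potential of the perturbed system, and then try to apply Corollary~\ref{cor:GradientSDEViaV}. This produces the cross term $-2\langle\nabla U,S\rangle$, which you acknowledge is ``the genuine work''. Your proposed fix---extract from $LU\le C-\lambda U$ the inequality $\lVert\nabla U\rVert^2-\tfrac12\sum a_{ij}\partial_{ij}^2U\ge\lambda U-C$ and conclude that $-\lVert\nabla U+S\rVert^2+\tfrac12\sum a_{ij}\partial_{ij}^2U$ retains a strictly negative multiple of $\lVert\nabla U\rVert^2$---does not go through from the hypotheses alone. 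After Young's inequality one is left with $(1-\epsilon)LU+\tfrac{\epsilon}{2}\sum a_{ij}\partial_{ij}^2U$, and there is no assumed control on the Hessian term by $U$; equivalently, bounding $-2\langle\nabla U,S\rangle$ by $\delta U+C_\delta$ would require $\lVert\nabla U\rVert=O(U)$, which is not assumed. The argument as written is incomplete at exactly the point you flag as the main obstacle.

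The paper sidesteps the issue entirely by choosing instead $V(t,x)=U(x)-\langle S(t),x\rangle$ (note the \emph{minus} sign) and applying Theorem~\ref{thm:GeometricConvergence-PeriodicMeasure} directly rather than Corollary~\ref{cor:GradientSDEViaV}. Then $\nabla V=\nabla U-S$, and since the drift is $b=-(\nabla U+S)$ one gets
\[
\langle b,\nabla V\rangle=-\langle\nabla U+S,\nabla U-S\rangle=-\lVert\nabla U\rVert^{2}+\lVert S\rVert^{2},
\]
so the cross term cancels exactly and $\mathcal{L}(t)V=\lVert S\rVert^{2}-\langle\dot S,x\rangle+LU$. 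From here one only needs to absorb the linear term $\langle\dot S+(\lambda-\lambda^{-})S,x\rangle$ into a small multiple of $U$, which is precisely what the super-linear growth hypothesis $c_1\lVert x\rVert\le c_2U(x)$ provides. The moral: $V$ need not be the potential; choosing it with the opposite sign on $\langle S,x\rangle$ turns the problematic cross term into a harmless constant $\lVert S\rVert^2$.
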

\begin{proof}
By Theorem \ref{thm:GeometricConvergence-PeriodicMeasure}, we verify
$V(t,x)=U(x)-\langle S(t),x\rangle$ satisfies (\ref{eq:GeometricDriftCdn}).
By the assumptions on $U$ and $S$, it is clear that $V\in C^{1,2}(\mathbb{R}^{+}\times\mathbb{R}^{d})$
is a $T$-periodic norm-like potential satisfying the regularity assumptions
of Corollary \ref{cor:GradientSDEViaV}. Since $\partial_{ij}^{2}V=\partial_{ij}^{2}U$,
we compute that
\begin{align*}
\mathcal{L}(t)V & =-\langle\dot{S},x\rangle-\langle\nabla U(X_{t})+S(t),\nabla U(X_{t})-S(t)\rangle+\sum_{i,j=1}^{d}(\sigma\sigma^{T}(x))_{ij}\partial_{ij}^{2}V\\
 & =-\langle\dot{S},x\rangle-\lVert\nabla U\rVert^{2}+\lVert S\rVert^{2}+\sum_{i,j=1}^{d}(\sigma\sigma^{T}(x))_{ij}\partial_{ij}^{2}U\\
 & =\lVert S\rVert^{2}-\langle\dot{S},x\rangle+LU.
\end{align*}
As $U$ satisfies the geometric drift condition, by picking any fixed
$\lambda^{-}\in(0,\lambda),$ we have 
\begin{align*}
\mathcal{L}(t)V & \leq\lVert S\rVert^{2}-\langle\dot{S},x\rangle+C-\lambda U\\
 & =\lVert S\rVert^{2}-\langle\dot{S},x\rangle+C-(\lambda-\lambda^{-})U-\lambda^{-}U+(\lambda-\lambda^{-})\langle S,x\rangle-(\lambda-\lambda^{-})\langle S,x\rangle\\
 & =\lVert S\rVert^{2}-\langle\dot{S}+(\lambda-\lambda^{-})S,x\rangle+C-(\lambda-\lambda^{-})V-\lambda^{-}U\\
 & \leq\lVert S\rVert^{2}+\lVert\dot{S}+(\lambda-\lambda^{-})S\rVert_{\infty}\lVert x\rVert+C-(\lambda-\lambda^{-})V-\lambda^{-}U,
\end{align*}
where $\lVert\dot{S}+(\lambda-\lambda^{-})S\rVert_{\infty}:=\sup_{s\in[0,T]}\lVert\dot{S}(s)+(\lambda-\lambda^{-})S(s)\rVert<\infty$
as $S$ and $\dot{S}$ are bounded. Then, by assumption with $c_{1}=\lVert\dot{S}+(\lambda-\lambda^{-})S\rVert_{\infty}$
and $c_{2}=\lambda^{-}$, we have a compact set $K\subset\mathbb{R}$
such that 

\[
c:=\sup_{x\in K}\left(\lVert\dot{S}+(\lambda-\lambda^{-})S\rVert_{\infty}\lVert x\rVert-\lambda^{-}U\right)<\infty.
\]
Hence $\mathcal{L}(t)V\leq\left(C+c+\lVert S\rVert^{2}\right)-\left(\lambda-\lambda^{-}\right)V$
i.e. the geometric drift condition (\ref{eq:GeometricDriftCdn}) is
satisfied. Since $S$ has a minimal positive period $T>0$, by Proposition
\ref{prop:CoExistence_with_IM} the periodic measure will have a minimal
positive period. 
\end{proof}

\subsection{Langevin Dynamics}

Langevin equations originated to model noisy molecular systems and
many other physical phenomena. As such, we expect applications to
the physical sciences. In fact, we shall see it extends easily from
stochastic gradient systems in an ``overdamped'' limit and applies
immediately to the stochastic periodically-forced harmonic oscillator.
We refer the reader to \cite{Zwanzig,Pavliotis_LangevinTextbook}
for further applications, details and derivations of Langevin equations.
Akin to earlier sections, we give sufficient conditions for the existence,
uniqueness and geometric convergence of a periodic measure for $T$-periodic
Langevin equations. We study Langevin equations of the form 

\begin{equation}
md\dot{q}_{t}=\left(F(t,q_{t})-\gamma\dot{q}_{t}\right)dt+\sigma dW_{t},\label{eq:StochasticN2L}
\end{equation}
with position $q_{t}\in\mathbb{R}^{d}$ , velocity $\dot{q}_{t}\in\mathbb{R}^{d}$,
acceleration $\ddot{q}_{t}\in\mathbb{R}^{d}$, constant mass $m>0$
, time-dependent force $F:\mathbb{R}^{+}\times\mathbb{R}^{d}\rightarrow\mathbb{R}^{d}$,
$d$-dimensional Brownian motion $W_{t}$ and constant matrix $\sigma\in GL(\mathbb{R}^{d})$.
For $\gamma\geq0$, $\gamma\dot{q}_{t}$ is understood as the frictional
force of the system. The proportional constant $\gamma$ is referred
as the damping constant. Without loss of generality, we take mass
to be unit i.e. $m=1$. 

Denote momentum $p_{t}=\dot{q}_{t}$, then (\ref{eq:StochasticN2L})
can be rewritten as a system of first order SDEs

\begin{equation}
\begin{cases}
dq_{t}=p_{t}dt,\\
dp_{t}=\left(-\gamma p_{t}+F(t,q_{t})\right)dt+\sigma dW_{t}.
\end{cases}\label{eq:LangevinGeneral}
\end{equation}
In phase space coordinates $X_{t}=(q_{t},p_{t})\in\mathbb{R}^{2d}$
this can be rewritten as

\begin{equation}
dX_{t}=b(t,X_{t})dt+\Sigma d\mathcal{W}_{t},\label{eq:LangevinEquationGeneral_X}
\end{equation}
where 
\begin{equation}
b(t,x)=b(t,q,p)=\left(\begin{array}{c}
p\\
-\gamma p+F(t,q)
\end{array}\right)\in\mathbb{R}^{2d},\quad\Sigma=\left(\begin{array}{cc}
0 & 0\\
0 & \sigma
\end{array}\right)\in\mathbb{R}^{2d\times2d},\quad\mathcal{W}_{t}=\left(\begin{array}{c}
0\\
W_{t}
\end{array}\right).\label{eq:DriftPhaseSpace}
\end{equation}
On a physical level, observe that the noise is degenerate in that
the noise affects $q_{t}$ only through $p_{t}$. Formally, Langevin
SDE (\ref{eq:LangevinEquationGeneral_X}) is degenerate since $\Sigma\notin GL(\mathbb{R}^{2d})$.
Resultantly, Theorem \ref{thm:DPZIrred} does not apply. Hence in
this current paper, we only study Langevin dynamics with only additive
noise. It will be of future works to study the situation with multiplicative
noise.

Written in phase space coordinates, it is clear that (\ref{eq:StochasticN2L})
has unique solution provided $b$ and $\sigma$ are Lipschitz. Labelling
$x=(q,p)=(x_{1},..,x_{2d})$, the infinitesimal generator is given
by 
\begin{align}
\mathcal{L}(t)f(t,x) & =\partial_{t}f+\langle p,\nabla_{q}f\rangle+\langle-\gamma p+F,\nabla_{p}f\rangle+\frac{1}{2}\sum_{i,j=1}^{d}(\sigma\sigma^{T})_{ij}\partial_{p_{i}p_{j}}^{2}f,\quad f\in C^{2,1}(\mathbb{R}^{+}\times\mathbb{R}^{2d}),\label{eq:LangevinGenerator}
\end{align}
where $\nabla_{q}:=(\partial_{q_{1}},\cdot\cdot\cdot,\partial_{q_{d}})^{T}$
and similarly $\nabla_{p}:=(\partial_{p_{1}},\cdot\cdot\cdot,\partial_{p_{d}})^{T}$.
\begin{rem}
We remark that in physical applications concerning small particles,
the mass is typically small. This suggest the inertia term $m\ddot{q}_{t}$
can be neglected. Hence, not rigorously, the dynamics (\ref{eq:StochasticN2L})
can be well-approximated by
\[
0=F(t,q_{t})-\gamma\dot{q}_{t}+\sigma dW_{t}.
\]
i.e. reduced to SDEs studied earlier in this section. Suggesting that
Langevin equations may be studied with multiplicative noise in the
context of small particles. A particular source of interesting dynamics
and applications is the case when $F(t,q)=-\nabla_{q}V(t,q)$ for
some potential $V(t,q)$ and so the Langevin equations are gradient
systems (provided $\gamma>0$). Such systems without inertia are called
overdamped Langevin dynamics.
\end{rem}
With the inapplicability of Theorem \ref{thm:DPZIrred}, we the following
irreducibility lemma for non-autonomous Langevin equation. The can
be done by a similar method as in \cite{MattinglyStuartHigham_ErgodicityForSDEs},
so it is omitted here.
\begin{lem}
\label{lem:LangevinReachability}Consider $T$-periodic Langevin equation
(\ref{eq:LangevinGeneral}) with locally Lipschitz $F$. Assume there
exists a norm-like function $V$ satisfying (\ref{eq:RegularityCondition}).
Then the Markov transition kernel satisfies $P(s,t,x,\Gamma)>0$ for
any $s<t<\infty,x\in\mathbb{R}^{2d}$ and non-empty open $\Gamma\in\mathcal{B}(\mathbb{R}^{2d})$.
\end{lem}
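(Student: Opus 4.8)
The plan is to combine an exact controllability argument for the associated control system with the full-support property of Wiener measure, following the strategy of Mattingly, Stuart and Higham \cite{MattinglyStuartHigham_ErgodicityForSDEs} for degenerate SDEs with locally Lipschitz coefficients. First I would note that the regularity condition (\ref{eq:RegularityCondition}) together with the norm-like property of $V$ yields non-explosion of $X_t=X_t^{s,x}$ by Khasminskii's test (Theorem 3.5 of \cite{Hasminskii}), so that the Markov kernel $P(s,t,\cdot,\cdot)$ in (\ref{eq:DefineMarkovTransitionByUniqueSoln}) is well defined; it then suffices to prove $\mathbb{P}^{s,x}(X_t\in B_\delta(a))>0$ for arbitrary $s<t<\infty$, $x=(q_0,p_0)$, $a=(q_a,p_a)\in\mathbb{R}^{2d}$ and $\delta>0$.

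The key step is that, because $\sigma$ is invertible, the control system $\dot q_\tau=p_\tau$, $\dot p_\tau=-\gamma p_\tau+F(\tau,q_\tau)+\sigma u(\tau)$ can be steered exactly from $x$ to $a$ on $[s,t]$. I would take a $C^2$ curve $q^\ast:[s,t]\to\mathbb{R}^d$ with $q^\ast(s)=q_0$, $\dot q^\ast(s)=p_0$, $q^\ast(t)=q_a$, $\dot q^\ast(t)=p_a$ (a Hermite cubic suffices), put $p^\ast:=\dot q^\ast$, and define the continuous control $u^\ast(\tau):=\sigma^{-1}\bigl(\ddot q^\ast(\tau)+\gamma\dot q^\ast(\tau)-F(\tau,q^\ast(\tau))\bigr)$ and the $C^1$ path $w^\ast(\tau):=\int_s^\tau u^\ast(r)\,dr$ with $w^\ast(s)=0$. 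Then $(q^\ast,p^\ast)$ is exactly the trajectory obtained by driving (\ref{eq:LangevinEquationGeneral_X}) with the smooth input $dw^\ast$ in place of $dW_\tau$ (no Stratonovich correction appears since $\sigma$ is constant), and it reaches $a$ at time $t$.

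I would then conclude by continuity of the solution map $w\mapsto X$ in the uniform topology, together with the fact that Wiener measure assigns positive mass to every uniform ball around the continuous path $w^\ast$ emanating from the origin: choosing $\eta>0$ small enough that $\|W-w^\ast\|_{\infty,[s,t]}<\eta$ forces $\sup_{\tau\in[s,t]}\|X_\tau-(q^\ast(\tau),p^\ast(\tau))\|<\delta$, we obtain $\mathbb{P}^{s,x}(X_t\in B_\delta(a))>0$. Since $a$ and $\delta$ are arbitrary, $P(s,t,x,\Gamma)>0$ for every non-empty open $\Gamma\in\mathcal{B}(\mathbb{R}^{2d})$.

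The hard part will be making the last paragraph rigorous, since $F$ is only locally Lipschitz and the solution map is a priori defined only up to the explosion time. I would deal with this by localisation: truncate $F$ outside a ball large enough to contain a neighbourhood of the compact reference trajectory $(q^\ast,p^\ast)([s,t])$ so that the truncated coefficients become globally Lipschitz; apply the continuity-of-solution-map and Brownian support statements to the truncated equation; and then check that, for $\eta$ small, the truncated solution never leaves the region where the truncation is inactive, hence coincides with the genuine Langevin solution on $[s,t]$. This localisation is precisely the device used in \cite{MattinglyStuartHigham_ErgodicityForSDEs}, which justifies the assertion in the statement that the proof proceeds by a similar method.
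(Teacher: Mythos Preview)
Your proposal is correct and follows precisely the approach indicated by the paper, which omits the proof and refers the reader to the method of \cite{MattinglyStuartHigham_ErgodicityForSDEs}. The exact-controllability construction via a $C^2$ interpolant, the continuity of the solution map combined with the full support of Wiener measure, and the localisation to handle the merely locally Lipschitz $F$ are exactly the ingredients of that reference, so there is nothing to add.
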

Thus we have the following Langevin counterpart of Theorem \ref{thm:GeometricConvergence-PeriodicMeasure}.
\begin{thm}
Consider $T$-periodic Langevin equation (\ref{eq:LangevinEquationGeneral_X})
with $F$ satisfying (\ref{eq:LocallySmoothDrift}) (in place of $b$).
Assume there exists a norm-like function $V\in C^{1,2}(\mathbb{R}^{+}\times\mathbb{R}^{2d},\mathbb{R}^{+})$
satisfying (\ref{eq:GeometricDriftCdn}) where $\mathcal{L}$ is given
by (\ref{eq:LangevinGenerator}). Then there exists a unique geometric
periodic measure $\rho:\mathbb{R}^{+}\rightarrow\mathcal{P}(\mathbb{R}^{2d})$
satisfying the convergences from Theorem \ref{thm:Geometric_PM_on_E}.
\end{thm}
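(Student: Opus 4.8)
The plan is to follow the template of the proof of Theorem~\ref{thm:GeometricConvergence-PeriodicMeasure}, replacing the non-degenerate ellipticity input used via Theorem~\ref{thm:DPZIrred} (which fails here since $\Sigma\notin GL(\mathbb{R}^{2d})$) by a hypoellipticity argument tailored to the Langevin structure.

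First I would extract a one-step geometric drift condition for the grid kernel $P(s,s+T)$. Since $V\ge 0$ and $\mathcal{L}(t)[\mathrm{const}]=0$, the condition \eqref{eq:GeometricDriftCdn} gives $\mathcal{L}(t)(V+1)\le C(V+1)$, so the regularity condition \eqref{eq:regular} holds and in particular $P(s,t,\cdot,\cdot)$ is a well-defined $T$-periodic Markov kernel. Applying It\^o's formula to $e^{\lambda t}V(t,X_t)$ and using \eqref{eq:GeometricDriftCdn} together with the fact that the stochastic integral $\int_s^t e^{\lambda(t-r)}\langle\sigma^{T}\nabla_p V(r,X_r),dW_r\rangle$ is a genuine martingale (again by regularity and the form of $\Sigma$), I obtain
\[
\mathbb{E}^{s,x}[V(t,X_t)]\le e^{-\lambda(t-s)}V(s,x)+\tfrac{C}{\lambda}\bigl(1-e^{-\lambda(t-s)}\bigr),\qquad s\le t.
\]
Setting $t=s+T$ and using the $T$-periodicity of $V$ (as in Theorem~\ref{thm:GeometricConvergence-PeriodicMeasure}), this reads $P(s,s+T)U_s\le \alpha U_s+\beta$ on $\mathbb{R}^{2d}$ with $U_s:=V(s,\cdot)$, $\alpha:=e^{-\lambda T}\in(0,1)$ and $\beta:=\tfrac{C}{\lambda}(1-e^{-\lambda T})>0$. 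Norm-likeness of each $U_s$ together with periodicity lets me choose, for $\epsilon$ large, a non-empty compact $K:=\bigcap_{s\in[0,T]}\{x: U_s(x)\le C/\lambda+\epsilon/(1-\alpha)\}$, exactly as in Theorem~\ref{thm:GeometricConvergence-PeriodicMeasure}.

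The substantive step, and the one I expect to be the main obstacle, is verifying the local Doeblin condition \eqref{eq:LDC_s*} for $P(s_*,s_*+T)$ without the non-degeneracy used in Theorem~\ref{thm:DPZIrred}. Here I would feed Theorem~\ref{thm:DensityContinuityforLCD} with two ingredients. (i) Local irreducibility \eqref{eq:LocalIrred}: this is provided by Lemma~\ref{lem:LangevinReachability}, whose hypothesis \eqref{eq:RegularityCondition} is met by $V+1$ as noted above. (ii) Existence of a jointly continuous local density $p(s,t,x,y)$ with respect to Lebesgue measure on $K\times K$: since $\sigma$ is constant and invertible, the diffusion vector fields span $\partial_{p_1},\dots,\partial_{p_d}$, and their Lie brackets with the drift field $p\cdot\nabla_q-\gamma p\cdot\nabla_p+F(t,q)\cdot\nabla_p$ give $-\partial_{q_i}+\gamma\partial_{p_i}$, hence recover the $q$-directions; thus the (time-dependent) H\"ormander bracket condition holds already at first order, and the smooth-local-density result of H\"opfner--L\"ocherbach--Thieullen \cite{HopfnerLocherbachThieullenTimeDepLocalHormanders} applies (noting that $F$ satisfies \eqref{eq:LocallySmoothDrift} in place of $b$, and $\sigma$ is time-independent). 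The delicate points are checking that the cited theorem's hypotheses are exactly met by the Langevin vector fields with merely locally smooth, $T$-periodic force, and that the resulting density is jointly continuous on a neighbourhood large enough to contain $K\times K$.

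With the geometric one-step drift and the local Doeblin condition in hand, I would invoke Theorem~\ref{thm:Geometric_PM_on_E} directly: it yields a unique $T$-periodic measure $\rho:\mathbb{R}^{+}\to\mathcal{P}(\mathbb{R}^{2d})$ together with the geometric convergence along multiples of $T$, its ``moving'' and pullback versions, and the uniform-in-initial-time estimate \eqref{eq:ConvToPM_uniform}, with Foster--Lyapunov function obtained by extending $V(s,x)=P(s,s_*)U_{s_*}(x)$ by periodicity. One could additionally remark that if $F$ has minimal positive period then Proposition~\ref{prop:CoExistence_with_IM} shows $\rho$ has a minimal positive period, hence is genuinely not an invariant measure, though the statement as posed does not ask for this.
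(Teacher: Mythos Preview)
Your proposal is correct and follows essentially the same approach as the paper: verify the time-dependent H\"ormander condition via first-order Lie brackets of the drift with the $p$-diffusion fields (the paper computes $[\Sigma_i,b]=(\sigma_i,-\gamma\sigma_i)^T$, equivalent to your $-\partial_{q_i}+\gamma\partial_{p_i}$), invoke \cite{HopfnerLocherbachThieullenTimeDepLocalHormanders} for a smooth local density, use Lemma~\ref{lem:LangevinReachability} for irreducibility, feed both into Theorem~\ref{thm:DensityContinuityforLCD}, and conclude via Theorem~\ref{thm:Geometric_PM_on_E}. The paper's proof is terser and leaves the one-step geometric drift derivation implicit (relying on the identical computation already carried out in Theorem~\ref{thm:GeometricConvergence-PeriodicMeasure}), but the logic is the same.
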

\begin{proof}
Let $\sigma_{i}$ denote the $i$'th column of $\sigma$ then $\Sigma_{i}=(0,\sigma_{i})^{T}$
denote the $i$'th column of $\Sigma$. Denoting $Id\in\mathbb{R}^{d\times d}$
to be the identity matrix, observe that Lie bracket 
\[
[\Sigma_{i},b]=(Db)\Sigma_{i}=\left(\begin{array}{cc}
0 & Id\\
-d^{2}F(t,q) & -\gamma Id
\end{array}\right)\left(\begin{array}{c}
0\\
\sigma_{i}
\end{array}\right)=\left(\begin{array}{c}
\sigma_{i}\\
-\gamma\sigma_{i}
\end{array}\right).
\]
Since $\sigma\in GL$($\mathbb{R}^{d}$), the columns $\sigma_{i}$
are linear independent. Hence the Lie algebra generated by $\Sigma_{i}$
and $b$ spans $\mathbb{R}^{2d}$. By the assumptions on $F$, $b$
satisfies (\ref{eq:LocallySmoothDrift}). Hence together with Foster-Lyapunov
function $V$, there exists a smooth density $p(s,t,x,y)$ with respect
to $\Lambda$by Theorem 1 of \cite{HopfnerLocherbachThieullenTimeDepLocalHormanders}
is satisfied. $F$ is locally Lipschitz hence with Lemma \ref{lem:LangevinReachability},
Theorem \ref{thm:DensityContinuityforLCD} holds. Hence the assumptions
of Theorem \ref{thm:Geometric_PM_on_E} are satisfied.
\end{proof}

\section{Density of Periodic Measures }

Similar to invariant measures, it is interesting and important to
know when periodic measures possess a density with respect to Lebesgue
measure. In this section, we show that the density of the periodic
measure for $T$-periodic SDEs on $\mathbb{R}^{d}$ necessarily and
sufficiently satisfies a Fokker-Planck PDE of an ``initial-terminal''
kind. For the length of this paper, this paper does not include the
existence of density of periodic measures in the general case, while
we do provides an explicit example for the periodically forced Ornstein-Uhlenbeck
process. We will study this in future publications. 

In previous sections, we have predominantly been focused initial state,
here we change our perspective to the forward spatial variable. As
such, at the risk of confusion, we interchange the roles of $x$ and
$y$ i.e. we take $y\in\mathbb{R}^{d}$ to be the initial state and
$x\in\mathbb{R}^{d}$ to be the forward time variable. Let $P(s,t,y,\cdot)$
be a two-parameter Markov transition kernel with a density $p(s,t,y,x)$
and $\mu_{s},\mu_{t}\in\mathcal{P}(\mathbb{R}^{d})$ for $s<t$ with
respective density $q(s,\cdot),q(t,\cdot)\in L^{1}(\mathbb{R}^{d})$
satisfying $\mu_{t}=P^{*}(s,t)\mu_{s}$. Then, by Fubini's theorem,
$q$ satisfies

\begin{equation}
q(t,x)=\int_{\mathbb{R}^{d}}p(s,t,y,x)q(s,y)dy.\label{eq:PushForwardDensity}
\end{equation}
It is well-known that $q$ satisfies the following Fokker-Planck equation
\[
\begin{cases}
\partial_{t}q=L^{*}(t)q,\\
\lim_{t\downarrow s}q(t,\cdot)=q(s,\cdot).
\end{cases}
\]
where $L^{*}(t)$ is the Fokker-Planck operator given by 
\begin{equation}
L^{*}(t)q=-\sum_{i=1}^{d}\partial_{x_{i}}(b_{i}(t,x)q)+\frac{1}{2}\sum_{i,j=1}^{d}\partial_{x_{i}x_{j}}^{2}\left(\left(\sigma\sigma^{T}(t,x\right)_{ij}q\right).\label{eq:FokkerPlanckEqn}
\end{equation}
In this section, we will always assume that the operator $L^{*}(t)$
is uniformly elliptic i.e. there exists $\lambda>0$ such that $\langle\xi,\sigma\sigma^{T}(t,x)\xi\rangle\geq\lambda\lVert\xi\rVert^{2}$
for all $(t,x)\in\mathbb{R}^{+}\times\mathbb{R}^{d}$ and $\xi\in\mathbb{R}^{d}$.

In the following, we shall use the notation $X\sim q$ to mean the
random variable $X$ is distributed by probability density $q\in L^{1}(\mathbb{R}^{d})$.
For random variables $X^{0}$ and $X^{1}$, we write $X^{0}\sim X^{1}$
if they have the same distribution. We state and prove the following
useful lemma. 
\begin{lem}
\label{lem:DistributionModT} Assume $\left(X_{t}^{0}\right)_{t\geq s},\left(X_{t}^{1}\right)_{t\geq s+T}$
are two processes satisfying the $T$-periodic SDE (\ref{eq:NonAutonSDE}).
If $X_{s}^{0}\sim X_{s+T}^{1}$ then $X_{s+t}^{0}\sim X_{s+T+t}^{1}$
for all $t\geq0.$
\end{lem}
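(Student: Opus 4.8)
The plan is to exploit the $T$-periodicity of the coefficients of (\ref{eq:NonAutonSDE}) via a time shift, so that the statement reduces to the Markov property together with weak uniqueness. First I would introduce the time-shifted process $\widetilde X_u := X^1_{u+T}$ for $u \ge s$, driven by the shifted Brownian motion $\widetilde W_u := W_{u+T} - W_{T}$ (with respect to the correspondingly shifted filtration), which is again a standard $d$-dimensional Brownian motion. A direct substitution into (\ref{eq:NonAutonSDE}), using $b(\cdot+T,\cdot)=b(\cdot,\cdot)$ and $\sigma(\cdot+T,\cdot)=\sigma(\cdot,\cdot)$, shows that $\widetilde X$ solves the same SDE (\ref{eq:NonAutonSDE}) on $[s,\infty)$ with initial condition $\widetilde X_s = X^1_{s+T}$.

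Next I would invoke that, under the hypotheses in force, the solution of (\ref{eq:NonAutonSDE}) is unique in law, hence a Markov process with the transition kernel $P$ of (\ref{eq:DefineMarkovTransitionByUniqueSoln}). Writing $\mu := \mathrm{Law}(X^0_s)$, the hypothesis $X^0_s \sim X^1_{s+T}$ says exactly $\mathrm{Law}(\widetilde X_s) = \mu$; evolving the common initial law under the common transition kernel then gives, for every $t \ge 0$,
\[
  \mathrm{Law}(X^0_{s+t}) \;=\; P^*(s,s+t)\mu \;=\; \mathrm{Law}(\widetilde X_{s+t}) \;=\; \mathrm{Law}(X^1_{s+T+t}),
\]
which is the assertion. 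Equivalently one can bypass the shift entirely: $T$-periodicity of the coefficients yields $P(s+T,s+T+t,\cdot,\cdot) = P(s,s+t,\cdot,\cdot)$ as in (\ref{eq:T-periodic}), and then $\mathrm{Law}(X^1_{s+T+t}) = P^*(s+T,s+T+t)\,\mathrm{Law}(X^1_{s+T}) = P^*(s,s+t)\,\mathrm{Law}(X^0_s) = \mathrm{Law}(X^0_{s+t})$.

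The only delicate point is the passage from ``both processes solve (\ref{eq:NonAutonSDE})'' to ``both processes are governed by the same transition kernel'': a priori the two solutions may live on different probability spaces and be driven by different Brownian motions, so one genuinely needs uniqueness in law (weak uniqueness) for (\ref{eq:NonAutonSDE}), which is available here since the coefficients are locally Lipschitz and the solution is non-explosive. Granting that, the remaining steps are a routine time-shift computation and an application of the Chapman--Kolmogorov relation, so no further obstacle is expected.
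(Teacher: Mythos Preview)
Your argument is correct, but it follows a different route from the paper. The paper works at the level of densities: it lets $p^{0}(s+t,\cdot)$ and $p^{1}(s+T+t,\cdot)$ denote the (assumed) densities of $X^{0}_{s+t}$ and $X^{1}_{s+T+t}$, observes that each satisfies the Fokker--Planck equation $\partial_{t}p^{k}=L^{*}(t+kT)p^{k}$ with the common initial datum $q$, uses $T$-periodicity of the coefficients to get $L^{*}(t)=L^{*}(t+T)$, and then appeals to the parabolic maximum principle to conclude that the difference $\hat p:=p^{0}(s+\cdot,\cdot)-p^{1}(s+T+\cdot,\cdot)$, which solves $\partial_{t}\hat p=L^{*}(t)\hat p$ with zero initial data, vanishes identically.

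Your approach is purely probabilistic: a time shift of the Brownian motion plus $T$-periodicity of $b,\sigma$ reduces the question to weak uniqueness for (\ref{eq:NonAutonSDE}), after which the conclusion is immediate from the Markov property (or, equivalently, from the $T$-periodicity (\ref{eq:T-periodic}) of the transition kernel). This is more elementary in that it avoids assuming the existence of densities and the uniform ellipticity needed for the maximum principle; on the other hand, the paper's PDE argument is natural in the context of Section~5, where the Fokker--Planck operator and densities are already the central objects. Your identification of weak uniqueness as the crux is exactly right, and under the paper's standing hypotheses (locally Lipschitz coefficients, regularity condition (\ref{eq:RegularityCondition})) it is indeed available.
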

\begin{proof}
For concreteness, let $X_{s}^{0}\sim X_{s+T}^{1}\sim q\in L^{1}(\mathbb{R}^{d})$
and $p^{0}(s+t,\cdot)$ denote the distribution of $X_{s+t}^{0}$
and similarly $p^{1}(s+T+t,\cdot)$ for $X_{s+T+t}^{1}$. Then $p^{k}$
satisfies the Fokker-Planck equation i.e. for $k=0,1$ and $t\geq0$
\[
\begin{cases}
\partial_{t}p^{k}(t+kT,x)=L^{*}(t+kT)p^{k}(t+kT,x),\\
p^{k}(s+kT,\cdot)=q.
\end{cases}
\]
It is clear that $L^{*}(t)=L^{*}(t+T)$ by the $T$-periodic coefficients.
By the linearity of the Fokker-Planck operator, it is easy to see
that $\hat{p}(t,\cdot):=p^{0}(s+t,\cdot)-p^{1}(s+t+T,\cdot)$ satisfies

\[
\begin{cases}
\partial_{t}\hat{p}=L^{*}(t)\hat{p} & t\geq0,\\
\hat{p}(0,\cdot)=0.
\end{cases}
\]
Then an application of parabolic maximum principle or otherwise yields
that $\hat{p}(t,\cdot)=0$ for all $t\geq0$ is the only physical
solution. Hence concluding $p^{0}(s+t,\cdot)=p^{1}(s+T+t,\cdot)$
for all $t\geq0$.
\end{proof}
With Lemma \ref{lem:DistributionModT}, we are now ready to state
the main result of this section.
\begin{thm}
\label{thm:FokkerPlanckPeriodicSoln} Consider $T$-periodic SDE (\ref{eq:NonAutonSDE})
with continuous coefficients. For $q\in C^{1,2}(\mathbb{R}^{+}\times\mathbb{R}^{d})\cap L^{1}(\mathbb{R}^{d})$
define $\rho:\mathbb{R}^{+}\rightarrow\mathcal{P}(\mathbb{R}^{d})$
by 

\[
\rho_{t}(\Gamma)=\frac{1}{\left\lVert q(t,\cdot)\right\rVert _{L^{1}(\mathbb{R}^{d})}}\int_{\Gamma}q(t,x)dx,\quad t\geq0.
\]
Then $\rho$ is a $T$-periodic measure if and only if 
\begin{equation}
\partial_{t}q=L^{*}(t)q,\quad q(0,\cdot)=q(T,\cdot).\label{eq:PeriodicFokkerPlanck}
\end{equation}
Hence, if (\ref{eq:PeriodicFokkerPlanck}) has a unique solution then
there is a unique periodic measure with density $q$. 
\end{thm}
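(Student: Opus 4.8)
The plan is to derive both implications from well-posedness of the linear, uniformly parabolic Fokker--Planck Cauchy problem set up just before the statement. First a normalisation: since $L^{*}(t)$ in (\ref{eq:FokkerPlanckEqn}) is in divergence form, any solution of $\partial_{t}q=L^{*}(t)q$ obeys $\frac{d}{dt}\int_{\mathbb{R}^{d}}q(t,x)\,dx=\int_{\mathbb{R}^{d}}L^{*}(t)q\,dx=0$, so $\lVert q(t,\cdot)\rVert_{L^{1}(\mathbb{R}^{d})}$ is independent of $t$; as $\rho$ depends on $q$ only through that factor, I may assume $\lVert q(t,\cdot)\rVert_{L^{1}}\equiv 1$, i.e. $\rho_{t}(dx)=q(t,x)\,dx$. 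I will use two standard facts recorded in the discussion preceding the theorem: by Fubini (as in (\ref{eq:PushForwardDensity})), $\rho_{t}=P^{*}(s,t)\rho_{s}$ is equivalent to $q(t,x)=\int p(s,t,y,x)q(s,y)\,dy$; and $v(t,x):=\int p(s,t,y,x)q(s,y)\,dy$ is the unique solution of $\partial_{t}v=L^{*}(t)v$, $\lim_{t\downarrow s}v(t,\cdot)=q(s,\cdot)$ in the relevant solution class --- uniqueness being exactly where uniform ellipticity of $L^{*}(t)$ enters, via the parabolic maximum principle as in the proof of Lemma~\ref{lem:DistributionModT}. Consequently, for $q\in C^{1,2}$, the statements ``$q$ solves the Fokker--Planck Cauchy problem issued from every $s\ge 0$'', ``$q(t,\cdot)=\int p(s,t,y,\cdot)q(s,y)\,dy$ for all $0\le s\le t$'', and ``$\rho_{t}=P^{*}(s,t)\rho_{s}$ for all $0\le s\le t$'' are pairwise equivalent.

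For the direction ($\Leftarrow$): assume $\partial_{t}q=L^{*}(t)q$ and $q(0,\cdot)=q(T,\cdot)$. By the equivalence above, $\rho_{t}=P^{*}(s,t)\rho_{s}$ holds for all $0\le s\le t$; and $q(0,\cdot)=q(T,\cdot)$ gives $\rho_{0}=\rho_{T}$. Hence for every $t\ge 0$, using these relations together with the $T$-periodicity of the Markov kernel of (\ref{eq:NonAutonSDE}) (which has $T$-periodic coefficients, so $P^{*}(T,t+T)=P^{*}(0,t)$),
\[
\rho_{t+T}=P^{*}(T,t+T)\rho_{T}=P^{*}(T,t+T)\rho_{0}=P^{*}(0,t)\rho_{0}=\rho_{t},
\]
so $\rho$ is a $T$-periodic measure. (Equivalently, one can invoke Lemma~\ref{lem:DistributionModT} directly, or note that $r(t,\cdot):=q(t+T,\cdot)$ solves the same Cauchy problem on $[0,\infty)$ as $q$ since $L^{*}(t+T)=L^{*}(t)$, so uniqueness forces $q(t+T,\cdot)\equiv q(t,\cdot)$.)

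For the direction ($\Rightarrow$): assume $\rho$ is a $T$-periodic measure. Then $\rho_{t}=P^{*}(s,t)\rho_{s}$ for all $0\le s\le t$, so by the equivalence recorded above $q$ solves $\partial_{t}q=L^{*}(t)q$ (with matching initial trace at each $s$); and $\rho_{0}=\rho_{T}$ with $\rho_{t}(dx)=q(t,x)\,dx$ forces $q(0,\cdot)=q(T,\cdot)$. Thus (\ref{eq:PeriodicFokkerPlanck}) holds, completing the equivalence. The final assertion is then immediate: if (\ref{eq:PeriodicFokkerPlanck}) has a unique (nonnegative, unit-mass) solution $q$, the equivalence just established shows the associated $\rho$ is the unique $T$-periodic measure admitting a $C^{1,2}\cap L^{1}$ density.

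The main obstacle is the uniqueness half of the Fokker--Planck well-posedness used throughout: one must know that the linear parabolic Cauchy problem $\partial_{t}u=L^{*}(t)u$ has at most one solution in a class broad enough to contain both $q$ and the kernel convolution $\int p(s,t,y,\cdot)q(s,y)\,dy$. This is where uniform ellipticity of $L^{*}(t)$ is essential, and it is handled by the parabolic maximum principle exactly as in Lemma~\ref{lem:DistributionModT}; the only remaining point needing care is the conservation-of-mass bookkeeping used to justify the normalisation $\lVert q(t,\cdot)\rVert_{L^{1}}\equiv 1$, which is routine.
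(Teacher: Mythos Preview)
Your proof is correct and follows essentially the same approach as the paper: both directions rest on the pushforward identity (\ref{eq:PushForwardDensity}), the fact that the Fokker--Planck Cauchy problem is uniquely solved by that pushforward (where uniform ellipticity enters via the maximum principle, as in Lemma~\ref{lem:DistributionModT}), and the $T$-periodicity of the kernel. The only stylistic difference is that you package these facts into a single three-way equivalence and invoke it in both directions, whereas the paper, for the $(\Rightarrow)$ direction, explicitly differentiates $q(t,x)=\int p(s,t,y,x)q(s,y)\,dy$ under the integral and verifies term-by-term that $L^{*}(t)$ passes through the $y$-integral; your abstract formulation is arguably cleaner, and in particular it makes explicit the appeal to uniqueness of the Cauchy problem in the $(\Leftarrow)$ direction, which the paper leaves implicit when it cites (\ref{eq:PushForwardDensity}) there.
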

\begin{proof}
For notational convenience and without loss of generality, we let
$q(t,x)$ be normalised. Assume $\rho$ is a $T$-periodic measure,
then by definition, $\rho_{t}=\rho_{t+T}$ for all $t\geq0$ i.e.
\[
\int_{\Gamma}q(t,x)dx=\rho_{t}(\Gamma)=\rho_{t+T}(\Gamma)=\int_{\Gamma}q(t+T,x)dx,\quad\Gamma\in\mathcal{B}(\mathbb{R}^{d}).
\]
As this holds for any $\Gamma\in\mathcal{B}(\mathbb{R}^{d})$, it
follows that $q(t,\cdot)=q(t+T,\cdot)$. On the other hand, it is
well know that $p(s,t,y,x)$ satisfies the Fokker-Planck equation
\[
\partial_{t}p(s,t,y,x)=L^{*}(t)p(s,t,y,x),
\]
 We take derivative with respect to $t$ on both sides of (\ref{eq:PushForwardDensity}),
we have 
\begin{eqnarray*}
\partial_{t}q(t,x) & = & \int_{\mathbb{R}^{d}}\partial_{t}p(s,t,y,x)q(s,y)dy\\
 & = & \int_{\mathbb{R}^{d}}L^{*}(t)p(s,t,y,x)q(s,y)dy\\
 & = & \int_{\mathbb{R}^{d}}-\sum_{i=1}^{d}\partial_{x_{i}}(b_{i}(t,x)p(s,t,y,x))q(s,y)dy\\
 &  & +\int_{\mathbb{R}^{d}}\frac{1}{2}\sum_{i,j=1}^{d}\partial_{x_{i}x_{j}}^{2}\left(\left(\sigma\sigma^{T}(t,x\right)_{ij}p(s,t,y,x)\right)q(s,y)dy\\
 & := & A+B.
\end{eqnarray*}
For the first term, we have 
\begin{eqnarray*}
A & = & -\sum_{i=1}^{d}\int_{\mathbb{R}^{d}}\left[\partial_{x_{i}}(b_{i}(t,x))p(s,t,y,x)+b_{i}(t,x)\partial_{x_{i}}(p(s,t,y,x))\right]q(s,y)dy\\
 & = & -\sum_{i=1}^{d}\partial_{x_{i}}(b_{i}(t,x))\int_{\mathbb{R}^{d}}p(s,t,y,x)q(s,y)dy-\sum_{i=1}^{d}b_{i}(t,x)\partial_{x_{i}}\int_{\mathbb{R}^{d}}p(s,t,y,x)q(s,y)dy\\
 & = & -\sum_{i=1}^{d}\partial_{x_{i}}(b_{i}(t,x))q(t,x)-\sum_{i=1}^{d}b_{i}(t,x)\partial_{x_{i}}q(t,x)\\
 & = & -\sum_{i=1}^{d}\partial_{x_{i}}(b_{i}(t,x)q(t,x)).
\end{eqnarray*}
Similarly, for the second term, we have 

\[
B=\frac{1}{2}\sum_{i,j=1}^{d}\partial_{x_{i}x_{j}}^{2}\left(\left(\sigma\sigma^{T}(t,x\right)_{ij}q(t,x)\right).
\]
Therefore, the density function $q(\cdot,\cdot)$ satisfies 
\[
q(t,\cdot)=q(t+T,\cdot),\quad\partial_{t}q=L^{*}(t)q,\quad\text{for all }s\geq0.
\]
By Lemma \ref{lem:DistributionModT}, it suffices that this PDE holds
specifically for $t=0$ hence we have (\ref{eq:PeriodicFokkerPlanck}). 

To prove the converse, we first note that Lemma \ref{lem:DistributionModT}
yields that $q(t,\cdot)=q(t+T,\cdot)$ for all $t\geq0$. Thus $\rho$
is $T$-periodic. By (\ref{eq:PushForwardDensity}) and Fubini's theorem,
it is clear that 
\begin{align*}
P^{*}(s,t)\rho_{s}(\Gamma) & =\int_{\mathbb{R}^{d}}\left[\int_{\Gamma}p(s,t,y,x)dx\right]q(s,y)dy=\int_{\Gamma}q(t,x)dx=\rho_{t}(\Gamma),\quad\Gamma\in\mathcal{B}(\mathbb{R}^{d}),
\end{align*}
concluding that $\rho$ is a $T$-periodic measure. 
\end{proof}
There is an ``alternative'' way to arrive the PDE of Theorem \ref{thm:FokkerPlanckPeriodicSoln}
as seen in \cite{JungNumericalSchemeForLiftedFokkerPlanck}. By considering
lifted coordinates $(t,X_{t})$, one can consider stationary solutions
of the lifted Fokker-Planck operator $\mathcal{L}^{*}$ i.e. $q(t,x)$
satisfying
\begin{align}
\mathcal{L}^{*}(t)q & :=-\partial_{t}q(t,x)+L^{*}(t)q(t,x)=0.\label{eq:StationaryLiftedFP}
\end{align}
This is equivalent to (\ref{eq:PeriodicFokkerPlanck}) upon rearranging.
However, this approach does not naturally imposes any boundary conditions,
hence is not sufficient for $q$ to be the density of the periodic
measure. Theorem \ref{thm:FokkerPlanckPeriodicSoln} states that the
boundary conditions is necessary. While \cite{JungNumericalSchemeForLiftedFokkerPlanck}
imposes the periodic boundaries, the reasoning does not seem apparent.
We shall show in the example below that, despite $L^{*}(t)$ is $T$-periodic,
a solution to the PDE need not be periodic and relaxing such condition,
perhaps expectedly, one can have infinitely many solutions.
\begin{example}
\label{exa:DensityofOU}The one-dimensional periodically-forced Ornstein-Uhlenbeck
process from Example \ref{exa:PeriodicOUExample} has its Fokker-Planck
operator given explicitly by
\begin{align*}
\mathcal{L}^{*}(t)q & =-\partial_{t}q-\partial_{x}((S(t)-\alpha x)q)+\frac{\sigma^{2}}{2}\partial_{x}^{2}q=-\partial_{t}q-S(t)\partial_{x}q+\alpha q+\alpha x\partial_{x}q+\frac{\sigma^{2}}{2}\partial_{x}^{2}q
\end{align*}
and the periodic measure is $\rho_{t}=\mathcal{N}\left(\xi(t),\frac{\sigma^{2}}{2\alpha}\right)$,
where $\xi(t)=e^{-\alpha t}\int_{-\infty}^{t}e^{\alpha r}S(r)dr$.
Here, the density of the periodic measure is given by 
\[
q(t,x)=\frac{1}{\sqrt{\pi\sigma^{2}/\alpha}}\exp\left(-\frac{(x-\xi(s))^{2}}{\sigma^{2}/\alpha}\right).
\]
We compute 
\[
\dot{\xi}=-\alpha\xi+S(t),\quad\partial_{t}q=2\frac{\alpha}{\sigma^{2}}\dot{\xi}(x-\xi)q,\quad\partial_{x}q=-2\frac{\alpha}{\sigma^{2}}(x-\xi)q,
\]
and
\begin{align*}
\partial_{x}^{2}q & =-2\frac{\alpha}{\sigma^{2}}\left[\partial_{x}(xq)-\xi\partial_{x}q\right]=-2\frac{\alpha}{\sigma^{2}}\left[1-2\frac{\alpha}{\sigma^{2}}(x-\xi)^{2}\right]q.
\end{align*}
Hence, substituting directly,
\begin{align*}
\frac{\mathcal{L}^{*}(t)q}{q} & =-2\frac{\alpha}{\sigma^{2}}\dot{\xi}(x-\xi)+2\frac{\alpha}{\sigma^{2}}S(t)(x-\xi)+\alpha-2\frac{\alpha^{2}}{\sigma^{2}}x(x-\xi)-\alpha\left[1-2\frac{\alpha}{\sigma^{2}}(x-\xi)^{2}\right]\\
 & =2\frac{\alpha^{2}}{\sigma^{2}}\xi(x-\xi)-2\frac{\alpha^{2}}{\sigma^{2}}x(x-\xi)+2\frac{\alpha^{2}}{\sigma^{2}}(x-\xi)^{2}\\
 & =0.
\end{align*}
Thus indeed the $q$ satisfies (\ref{eq:PeriodicFokkerPlanck}). We
show that the periodic condition of (\ref{eq:PeriodicFokkerPlanck})
cannot simply be dropped because of periodic coefficients. From (\ref{eq:LawOfPeriodicMultidimensionalOU}),
the transition density is explicitly given by
\[
p(s,t,y,x)=\frac{1}{\sqrt{\frac{\sigma^{2}}{\alpha}(1-e^{-2\alpha(t-s)})}}\exp\left(-\frac{(x-e^{-\alpha(t-s)}y-J(s,t)}{\frac{\sigma^{2}}{\alpha}(1-e^{-2\alpha(t-s)})}\right),
\]
satisfies $-\partial_{t}p(t,x)+L^{*}(t)p(t,x)=0$ for every fixed
initial time $s$ and point $y$. However, $p$ is not periodic as
$J$ is not periodic. Since there is a non-periodic solution for every
$y\in\mathbb{R}$, there are, in fact, infinite number of solutions
to the PDE if one relaxed the demand of periodicity. 
\end{example}
\textbf{Acknowledgements.} We would like to acknowledge the financial
support of a Royal Society Newton fund grant (ref. NA150344) and an
EPSRC Established Career Fellowship to HZ (ref. EP/S005293/1)

\small

\end{document}